\documentclass[11pt]{article}

\usepackage{amsmath,amssymb,amsthm} 
\usepackage{listings,verbatim,float}
\usepackage{graphicx}
\usepackage{tikz}
\usepackage{geometry}
\usepackage{forest}
\usepackage{ragged2e} 
\usepackage{setspace} 
\usepackage[style=authoryear,backend=biber]{biblatex}
\usepackage{hyperref} 
\usepackage{xurl}
\usepackage{bm}
\usepackage{booktabs}
\usepackage{fancyhdr}

\newcommand{\thetitle}{}
\newcommand{\theauthor}{}
\renewcommand{\title}[1]{\renewcommand{\thetitle}{#1}}
\renewcommand{\author}[1]{\renewcommand{\theauthor}{#1}}
\hypersetup{
 colorlinks=true,
 linkcolor=blue,
 citecolor=blue,
 urlcolor=blue,
 pdfauthor={\theauthor},
 pdftitle={\thetitle}
}
\AtBeginDocument{

 \begin{center}
 {\LARGE\textbf{\thetitle}\par}
 \vskip 1.7em
 {\large\theauthor}
 \vskip 2em
 \end{center}
}

\newtheorem{theorem}{Theorem}[section]
\newtheorem{corollary}{Corollary}[theorem]
\newtheorem{lemma}[theorem]{Lemma} 
\newtheorem{prop}[theorem]{Proposition}

\theoremstyle{definition}
\newtheorem{definition}[theorem]{Definition}

\newtheorem{deflem}[theorem]{Definition/Lemma}
\newtheorem{remark}[theorem]{Remark}

\newenvironment{contact}{
 \vspace{0.5em}
 \begin{center}
 \begin{tabular}{@{}l}
}{
 \end{tabular}
 \end{center}
 \vspace{0.5em}
}

\newcommand{\email}[1]{Email:~\url{#1}}

\newtheorem{innercustomthm}{Theorem}
\newtheorem{innercustomlem}{Lemma}

\theoremstyle{definition}
\newtheorem{innercustomdef}{Definition}

\newenvironment{customthm}[1]
  {\renewcommand\theinnercustomthm{#1}\innercustomthm}
  {\endinnercustomthm}

\newenvironment{customlemma}[1]
  {\renewcommand\theinnercustomlem{#1}\innercustomlem}
  {\endinnercustomlem}

\newenvironment{customdef}[1]
  {\renewcommand\theinnercustomdef{#1}\innercustomdef}
  {\endinnercustomdef}

\renewenvironment{abstract}{
 \begin{center}
 {\small\textbf{Abstract}\par}  
 \begin{minipage}{.85\textwidth}
}{
 \end{minipage}
 \end{center}
 \vspace{1em}
}

\title{Probability of Quota Violations in Divisor Apportionment Methods with Nonzero Allocations}
\author{Tyler C. Wunder and Joseph W. Cutrone}

\begin{document}


\begin{abstract}
Apportionment assigns indivisible items among groups. By the Balinski-Young theorem, no method can satisfy both house monotonicity and the quota rule. This paper investigates quota violations caused by nonzero allocation constraints, and derives exact probability formulas for their frequency. Such violations occur in systems like the U.S. House of Representatives, where each state is guaranteed at least one seat. We analyze the three-state case, introduce the $\tau$ statistic to parametrize population distributions, and prove an Asymptotic Quota Stabilization theorem: for fixed $\tau$, quota behavior stabilizes as populations grow, yielding probability results for quota violations determined by the set of ultimately violatory $\tau$ values.\\

Applying this framework to the five classical divisor methods, we derive exact probability formulas. Additionally, we show that as the number of seats $M \to \infty$, these probabilities converge to method-specific constants. These results provide a precise, quantitative foundation for evaluating the fairness and frequency of quota violations in constrained apportionment systems. 

\end{abstract}

\section{Introduction and Foundations}

\subsection{Background on Apportionment}
Let $n$ be the number of states with populations $p_i \in \mathbb{R}_{>0}$ for $1\leq i \leq n$. To avoid ties, assume no two states have the same populations. Let $M \geq n$ be the number of seats to apportion. 

A \textit{divisor method} is an algorithmic procedure to distribute the $M$ seats by dividing populations by a strictly increasing \textit{divisor function}, $\delta: \mathbb{R}_{\geq 0} \to \mathbb{R}_{\geq 0}$.

Given populations $(p_1, \dots, p_n)$ and a divisor function $\delta$, an \textit{apportionment} $A(p_1, \dots, p_n) = (a_1, \dots, a_n)$, is calculated as follows:
\begin{enumerate}
\item[\emph{1.}] Assign each state zero seats (or one seat if $\delta(0) = 0$).
\item[\emph{2.}] Calculate each state's priority value $\frac{p_i}{\delta(r_i)}$, where $r_i$ is the current number of seats assigned to state $i$. 
\item[\emph{3.}] Assign to the state with the highest priority value an additional seat.
\item[\emph{4.}] Repeat steps (2) and (3) until $M$ seats have been apportioned.
\end{enumerate}

The five ``workable methods" most commonly used are defined by the following divisor functions:
\begin{center}
\begin{tabular}{|l|l|}
 \hline
 Method & Divisor Function \\ \hline
 Adams & $\delta(s) = s$ \\
 Jefferson &$\delta(s) = s+1$\\
 Webster & $\delta(s) = s+0.5$\\
 Huntington-Hill & $\delta(s) = \sqrt{s(s+1)}$ \\
 Dean & $\delta(s) = \dfrac{2s(s+1)}{2s+1}$ \\ \hline
\end{tabular}
\end{center}

A state's \textit{standard quota} $q_i$ is the number of seats proportional to its population. This generally non-integer number is defined by $q_i = \frac{p_i}{SD}$, where $SD = \frac{P}{M}$ is the \textit{standard divisor}, defined to be the total population $P = \sum_{i=1}^n p_i$ divided by the number of seats $M$. Ideally, each state should be apportioned its standard quota. However, as the standard quota is rarely an integer, a state should instead receive its \emph{upper quota}, $\lceil q_i \rceil$, or its \emph{lower quota}, $\lfloor q_i \rfloor$. With this in mind, define the following notion of unfair apportionment:

\label{q.v._def} A \emph{quota violation} occurs whenever a state is assigned more seats than its upper quota or fewer seats than its lower quota. That is, $a_i > \lceil q_i \rceil$ or $a_i < \lfloor q_i \rfloor$ for some state $i$. In particular, a \emph{lower quota violation} occurs when $a_i < \lfloor q_i \rfloor$, and an \emph{upper quota violation} occurs when $a_i > \lceil q_i \rceil$. 

By the \emph{Balinski-Young Impossibility Theorem} \parencite{b-young_textbook}, all apportionment methods have either quota violations or other unfair occurrences called \textit{paradoxes}. Specifically, the previously defined divisor methods are without paradoxes but all are known to violate quota.

\subsection{Nonzero Allocation Constraints}
In many applications, such as the apportionment of legislative representatives, it is undesirable for an apportionment method to assign zero representatives to any state. To ensure $a_i \geq 1$, methods like Jefferson and Webster are modified such that $\delta(0) = 0$. This modification can introduce additional quota violations not present in the unconstrained method where a small state receives a seat it would not have earned based purely on its priority value.

\begin{definition}\label{stealing_def}
For ordered populations $p_1, \dots, p_n$; $M$ seats; and a divisor method $A$ that guarantees nonzero allocations, define the \emph{modified method} $\tilde{A}$ as follows: \end{definition}
\begin{enumerate}

\item With $M-1$ seats, calculate $A(p_{2}, \dots, p_n)$ to obtain $(a_{2}, \dots, a_n)$.
\item Set $a_1 =1$.
\item Define $\tilde{A}(p_1, \dots, p_n) = (a_1, \dots, a_n)$
\end{enumerate}

Note the order of the steps is important here. The modified method enforces a guaranteed seat for state one and applies the divisor method on the remaining $M-1$ seats. 

\begin{definition}\label{vio_as_not0allo_def}
Let $A$ be a divisor method that guarantees nonzero allocations on $M$ seats and suppose the populations are ordered such that $p_1 < \cdots < p_n$. The apportionment $A(p_1, \dots, p_n)$ exhibits a \emph{quota violation caused by nonzero allocations} if the following conditions hold:
\begin{enumerate}
\item $A(p_1, \dots, p_n)$ violates the quota rule.
\item $\tilde{A}(p_1, \dots, p_n) = A(p_1, \dots, p_n)$.
\item When apportioning $M$ seats $A(p_{2}, \dots, p_n)$ does not violate the quota rule.
\end{enumerate}
\end{definition}

Additionally, define a \emph{quota violation caused or worsened by nonzero allocations} as when conditions 1 and 2 hold.

The motivation behind these definitions is to check if the smallest state has a priority value so low that they would not receive seats by having the largest priority value, and instead only receive a seat because the method guarantees each state at least one seat.

\section{Main Results}
The paper introduces a statistic $\tau$ which is useful for studying quota violations caused by nonzero allocations in the three state case.

\begin{customdef}{\ref{tau_def}}
    Let $(1,x,y)$, $1<x<y$, be the associated reduced population vector for a three state distribution ($p_1, p_2, p_3$). Define the $\tau$ statistic as follows: 
$$ \tau := \frac{\mathrm{mean}(1,x,y)-\mathrm{median}(1,x,y)}{\max(1,x,y)}. $$
\end{customdef}

\begin{customlemma}{\ref{tau_family}}
    The statistic $\tau$ parametrizes a family of population distributions dependent on $x$. Specifically, this is the set of reduced population vectors associated with a fixed $\tau$: $\{ (1,x,y(x,\tau)) \mid x \in \mathbb{R}, x>1 \}$, where $y$ is a linear function of $x$.
\end{customlemma}

\begin{customthm}{\ref{tau_limit_thm1}}
    (Asymptotic Apportionment) Let $(1,x,y(x,\tau))$ be a reduced population vector associated with a fixed $\tau$, and let $A$ be a divisor method that guarantees nonzero allocations on $M$ seats. Then, for all but finitely many $\tau \in (-\frac13,\frac13)$: $$\displaystyle{\lim_{x \to \infty} A(1,x,y(x,\tau)) = \tilde{A} \left( \frac{M(1-3\tau)}{3-3\tau},\frac{2M}{3-3\tau} \right)},$$ where $\tilde{A}$ is defined as apportioning the remaining $M-1$ seats among states 2 and 3 according to the divisor method $A$ (see definition \ref{stealing_def}).
\end{customthm}

\begin{customthm}{\ref{X_tau_def/exist}} (Asymptotic Quota Stabilization for Nonexceptional $\tau$) Let $\mathcal{E}$ be the exceptional set of finite $\tau \in (-\frac 13,\frac 13)$ such that Theorem \ref{tau_limit_thm1} fails. Fix $\tau \notin \mathcal{E}$. Then there exists some $x_\tau$ such that for all $x \geq x_\tau$, $A(1,x,y(x,\tau))$ is always a lower quota violation caused by nonzero allocation or never a violation, with this behavior determined by $\tau$. 

Specifically, let state $i \in \{2,3\}$ be the state that receives the last seat when apportioning $M$ seats among only states $2$ and $3$ from $A(\tilde{q}_2, \tilde{q}_3)$. 
Then: 
\begin{enumerate}
 \item If $A(\tilde{q}_2, \tilde{q}_3)$ assigns state $i$ its lower quota 
 $\lfloor \tilde{q}_i \rfloor$, then for all $x \geq x_\tau$, the apportionment 
 $A(1, x, y(x,\tau))$ is a lower quota violation caused by nonzero allocations, 
 with state $i$ receiving $\lfloor \tilde{q}_i \rfloor - 1$ seats.
 \item If $A(\tilde{q}_2, \tilde{q}_3)$ assigns state $i$ its upper quota 
 $\lceil \tilde{q}_i \rceil$, then for all $x \geq x_\tau$, the apportionment 
 $A(1, x, y(x,\tau))$ is not a quota violation.
\end{enumerate}
\end{customthm}

\begin{customthm}{\ref{prob_sum}} 
 Let $A$ be a divisor method with divisor function $\delta$ that guarantees nonzero allocations on M seats and $n = 3$ states. Then, under the asymptotic uniform distribution on reduced population vectors $(1, x, y)$, with $1<x<y$,  the probability of a quota violation is the same as the probability of a quota violation caused by states guaranteed nonzero allocations and is calculated by:
$$P(A(1,x,y) \text { has a q.v.}) = \sum_{k = \lceil M/2 \rceil}^{M-1} M\left( \frac{1}{k} - \frac{1}{k + D(k)} \right)$$
where $k$ indexes the integer values of $\lfloor \tilde{q}_3 \rfloor$, where 
$$\tilde{q}_3 = \frac{2M}{3-3\tau}, \qquad \tau = \frac{\mathrm{mean}(1,x,y)-\mathrm{median}(1,x,y)}{\max(1,x,y)}=\dfrac{1 - 2x + y}{3y}.$$ and 
$$D(k) =\frac{(M-k)\delta(k-1)-k\,\delta(M-k-1)}{\delta(k-1)+\delta(M-k-1)}.$$
\end{customthm}

\begin{customthm}{\ref{Mtoinftyproblimit}}
    Under the asymptotic uniform distribution for vectors $(1,x,y)$ with $1<x<y$, we have as $M \to \infty$ that the probability of a quota violation caused by nonzero allocations converges to $2 \ln(2) - 1 \approx 0.386292$ in the Adams method, to $0$ in the modified Jefferson method, and to $\ln(2) - \frac{1}{2} \approx 0.193147$ in the Huntington-Hill, Webster, and Dean methods.
\end{customthm}

\begin{customthm}{\ref{kitchen_sink}}
Let $M$ be the total number of seats, and let $A$ be the Modified Jefferson, Modified Webster, Dean, or Huntington-Hill method with divisor function $\delta$. Let $(1,x,y)$ be a reduced population vector with $1 < x < y$, drawn from a probability distribution with joint density $f(x,y)$.

For each $\tau \in \left(-\frac{1}{3},\frac{1}{3}\right)$, the $\tau$-line parametrization in $x$ is 
$$
x = x(y,\tau) := \frac{1}{2}y - \frac{3}{2}\tau y + \frac{1}{2},
$$
Let the set of ultimately violatory $\tau$ values be
$$
V = \bigcup_{k= \lfloor M/2 \rfloor + 1}^{M-1} 
\left( 1 - \frac{2M}{3k}, \, 1 - \frac{2M}{3\bigl(k + D(k)\bigr)} \right),
$$
where
$$
D(k) =\frac{(M-k)\delta(k-1)-k\,\delta(M-k-1)}{\delta(k-1)+\delta(M-k-1)}.
$$

For each $\tau$, let $y_\tau^F$ denote the floor stabilization threshold and $y_\tau$ the agreement with asymptotic apportionment threshold defined in Lemmas \ref{y_tau.def} and \ref{y_tau.def2}. Define
$$
y_\tau^{\max} :=
\begin{cases}
\infty & \text{if } \tau \in V,\\
y_\tau & \text{if } \tau \notin V.
\end{cases}
$$

Then the probability of a quota violation caused by a guaranteed seat is
$$
\Pr(\text{quota violation})
=
\int_{-1/3}^{1/3}
\int_{y_\tau^F}^{y_\tau^{\max}}
f\bigl(x(y,\tau),\,y\bigr)\,
\frac{3}{2}y
\, dy \, d\tau.
$$
\end{customthm}

\section{Assumptions and Background Information}
We assume $M \geq n$ and no ties in priority values. The following are simple lemmas and known results:
\begin{lemma}\parencite{b-young_textbook}\label{neutrality} Divisor methods satisfy neutrality and proportionality: they are invariant under permutation of the states and under positive scaling of all populations. \end{lemma}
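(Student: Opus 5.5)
Both claims follow directly from the algorithmic definition of a divisor method given in the introduction, since every step of that procedure refers to the populations only through the priority values $p_i/\delta(r_i)$ and their ordering. The plan is to show that each operation, permuting the states or rescaling all populations, transforms these priority values in a way that leaves the choice made at every round unchanged. An induction on the number of seats assigned then gives the result.

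\emph{Neutrality.} Let $\sigma$ be a permutation of $\{1,\dots,n\}$ and set $p'_i = p_{\sigma(i)}$. I would prove by induction on the round $t$ that the partial allocations satisfy $r'_i(t) = r_{\sigma(i)}(t)$.

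At $t=0$ this holds because every state starts with the same initial allocation, either $0$ or $1$ depending only on whether $\delta(0)=0$. For the inductive step, the multiset of priority values at round $t$ is identical in the two runs, with the value $p_{\sigma(i)}/\delta(r_{\sigma(i)})$ attached to index $i$ in the permuted run. By the standing no-ties assumption the maximizer is unique, so the permuted run awards its seat to $\sigma^{-1}(j)$, where $j$ is the state chosen in the original run. This preserves the relation. After $M$ rounds we obtain $A(p_{\sigma(1)},\dots,p_{\sigma(n)}) = (a_{\sigma(1)},\dots,a_{\sigma(n)})$.

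\emph{Proportionality.} For $c>0$, the same induction applies with $p'_i = c\,p_i$. Every priority value becomes $c\,p_i/\delta(r_i)$, and multiplication by the positive constant $c$ preserves strict order. Hence the argmax at each round, and therefore the final allocation, is unchanged.

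The one step needing care is the first-seat issue when $\delta(0)=0$, since the priority $p_i/\delta(0)$ is formally infinite. I would handle it by noting that the procedure never evaluates that quantity: step 1 assigns the guaranteed seat outright, so the inductive comparisons only involve $\delta(r)$ with $r\ge 1$, which is strictly positive because $\delta$ is strictly increasing and nonnegative. With this observation every ratio in the argument is finite and well defined. Ties are excluded by assumption, so the induction goes through without further obstacle. The same reasoning also shows that the modified method $\tilde{A}$ inherits both properties.
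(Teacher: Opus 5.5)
Your argument is correct. The paper gives no proof of this lemma and simply cites Balinski and Young. There, both properties come from the divisor-criterion description of a divisor method rather than from the sequential priority algorithm. In that description, an allocation $(a_1,\dots,a_n)$ is a $\delta$-apportionment exactly when $\max_i p_i/\delta(a_i) \le \min_{j:\,a_j\ge 1} p_j/\delta(a_j-1)$; equivalently, each $a_i$ is $p_i/\lambda$ rounded according to $\delta$, for a common divisor $\lambda$. That condition is visibly symmetric in the indices. It is also unchanged when every $p_i$ is multiplied by $c>0$, since you just replace $\lambda$ by $c\lambda$.

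Your induction on the rounds of the priority algorithm is instead the natural self-contained proof for the definition this paper actually uses. It does not need the equivalence between the sequential and divisor-criterion formulations. The price is that you rely on the standing no-ties assumption to make each argmax unique, whereas the min--max formulation handles ties automatically by treating the method as a set-valued correspondence.

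You were right to check that $p_i/\delta(0)$ is never evaluated when $\delta(0)=0$. Your closing remark about $\tilde A$ is also fine, provided $\tilde A$ is read as ``give one seat to the smallest state,'' which both permutation and scaling preserve.
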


Therefore, we may analyze just the standard quotas $  (q_1, \dots, q_n)$ or the
\emph{reduced population vectors} $\left(1,\frac{p_2}{p_1}, \dots, \frac{p_n}{p_1} \right)$.

\begin{theorem}\label{no_up_and_down}\parencite{b-young_textbook} No single divisor-method apportionment can simultaneously contain both an upper and a lower quota violation. \end{theorem}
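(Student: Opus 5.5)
This is a classical result of Balinski and Young. I would prove it directly from the divisor characterization of the methods. Any apportionment $A(p_1,\dots,p_n)=(a_1,\dots,a_n)$ produced by the priority-value algorithm with divisor function $\delta$ admits a \emph{critical divisor} $\lambda>0$ such that
$$\frac{p_i}{\delta(a_i)}\le \lambda \le \frac{p_i}{\delta(a_i-1)} \quad\text{for all } i,$$
with the convention $p_i/\delta(-1)=\infty$, and $p_i/\delta(0)=\infty$ when $\delta(0)=0$. Indeed, take $\lambda$ to be the priority value of the last seat awarded. Every state's next priority value $p_i/\delta(a_i)$ is at most this, since otherwise that state would have been chosen earlier. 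Every seat actually given had priority at least $\lambda$, because priority values awarded are nonincreasing as the algorithm proceeds.

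First I would compare $\lambda$ with the standard divisor $SD=P/M$, using the fact that $\delta$ lies between $s$ and $s+1$ for the five methods. Suppose state $j$ has an upper violation, $a_j>\lceil q_j\rceil$, so $a_j-1\ge \lceil q_j\rceil \ge q_j$. The inequality $\lambda\le p_j/\delta(a_j-1)$ together with $\delta(a_j-1)\ge a_j-1\ge q_j$ gives $\lambda\le p_j/q_j=SD$. Strictness comes from the no-ties assumption and from $\delta(s)>s$ or $a_j-1>q_j$; equality cases need $q_j$ to be an integer, which I will handle separately.

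Next suppose state $k$ has a lower violation, $a_k<\lfloor q_k\rfloor$, so $a_k+1\le \lfloor q_k\rfloor\le q_k$. Then $\lambda\ge p_k/\delta(a_k)$, and $\delta(a_k)\le a_k+1\le q_k$, so $\lambda\ge p_k/q_k=SD$. Combining the two directions gives $SD\le\lambda\le SD$, so all of these inequalities are equalities. Equality forces $\delta(a_j-1)=a_j-1=q_j$ and $\delta(a_k)=a_k+1=q_k$. The first puts $\delta$ at its lower envelope, which for these methods happens only for Adams, and then $q_j=a_j-1$ is an integer. That contradicts $a_j>\lceil q_j\rceil=q_j$ only through the slack already used, so I need to track the chain more carefully.

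The cleaner route is to use the strict inequalities that a violation actually provides. $a_j>\lceil q_j\rceil$ gives $a_j-1\ge\lceil q_j\rceil\ge q_j$. Since $\delta(a_j-1)\ge a_j-1$ and $\delta(a_k)\le a_k+1$, equality everywhere would require $q_j$ and $q_k$ both integral, $\delta(s)=s$ at $a_j-1$, and $\delta(s)=s+1$ at $a_k$. No single one of the five divisor functions attains both envelopes: Adams never equals $s+1$ and Jefferson never equals $s$.

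The main obstacle is exactly this boundary case. What I would try first is to show that in it the two inequalities $\lambda\le p_j/\delta(a_j-1)$ and $\lambda\ge p_k/\delta(a_k)$ cannot both be tight without producing a tie in priority values, which is excluded by assumption. Beyond the five tabulated methods, the general statement relies on the standing hypothesis that $s\le\delta(s)\le s+1$.
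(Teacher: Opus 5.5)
Your strategy (compare the critical divisor $\lambda$, the priority of the last seat awarded, with the standard divisor $SD$) is the standard Balinski--Young argument. The paper gives no proof of its own and only cites it. Your two non-strict bounds are correct: $\lambda\le SD$ from an upper violation and $\lambda\ge SD$ from a lower one.

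\textbf{The gap is the boundary case $\lambda=SD$, which you never close.} Your envelope analysis there contains several false statements:
\begin{itemize}
\item $\delta(s)=s$ does not happen ``only for Adams''. Huntington--Hill, Dean, modified Webster and modified Jefferson all have $\delta(0)=0$.
\item Modified Jefferson, which the paper uses throughout, attains both envelopes: $\delta(0)=0$ and $\delta(s)=s+1$ for $s\ge 1$. So ``no single one of the five divisor functions attains both envelopes'' fails for it.
\item There is no contradiction with $a_j>\lceil q_j\rceil=q_j$. In the equality case $a_j=q_j+1$, which is a genuine upper violation.
\end{itemize}
For the five methods you could rescue the argument by noting that the lower-envelope equality is needed at $s=a_j-1=q_j\ge 1$, where only Adams has $\delta(s)=s$.

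For a general $\delta$ with $s\le\delta(s)\le s+1$, the class the cited theorem is really about, no envelope argument can work. Take $\delta(0)=0$, $\delta(1)=1$, and $\delta(s)=s+1$ for $s\ge 2$, with populations $(1,3)$ and $M=4$, so $SD=1$. The algorithm reaches $(1,2)$ and then faces the tie $1/\delta(1)=3/\delta(2)=1$. Breaking it toward state 1 gives $(2,2)$. This violates upper quota for state 1 and lower quota for state 2 at the same time. So the no-ties hypothesis is doing essential work, and the step you left as ``what I would try first'' is the actual heart of the proof.

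\textbf{The fix is short and makes the equality discussion unnecessary.} Let $\ell$ be the state receiving the last seat, so $\lambda=p_\ell/\delta(a_\ell-1)$.
\begin{itemize}
\item For $k\ne\ell$: state $k$ held $a_k$ seats at the final step and lost, so $p_k/\delta(a_k)\le\lambda$. Equality would be a tie in priority values, which the paper's standing assumption excludes.
\item For $k=\ell$: $p_\ell/\delta(a_\ell)<\lambda$ because $\delta$ is strictly increasing.
\end{itemize}
Hence $p_k/\delta(a_k)<\lambda$ for every $k$. A lower violation at $k$ gives $a_k+1\le q_k$, so
\[
SD=\frac{p_k}{q_k}\le\frac{p_k}{a_k+1}\le\frac{p_k}{\delta(a_k)}<\lambda ,
\]
which contradicts $\lambda\le SD$ from the upper violation. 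This covers every $\delta$ with $s\le\delta(s)\le s+1$, including all five methods and their modified versions, with no integrality or envelope case analysis.

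One degenerate case remains: $M=n$ with $\delta(0)=0$. There no seat is awarded by priority, so $\lambda$ is undefined. It is trivial, since every $a_i=1$ and an upper violation would require $\lceil q_j\rceil\le 0$.
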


\begin{theorem}\label{no_vio_n=2}\parencite{b-young_textbook} When $n=2$, no quota violations occur in any divisor method. \end{theorem}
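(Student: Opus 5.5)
The statement to prove is Theorem \ref{no_vio_n=2}: for $n=2$ no divisor method violates quota. I will show that with two states each $a_i$ lies in $\{\lfloor q_i\rfloor,\lceil q_i\rceil\}$. By Lemma \ref{neutrality} we may work with the standard quotas $q_1+q_2=M$. Since $a_1+a_2=M$ as well, we get $a_1-q_1=-(a_2-q_2)$. So it suffices to show that $|a_i-q_i|<1$ for each $i$; this forces $a_i\in\{\lfloor q_i\rfloor,\lceil q_i\rceil\}$, and equals $q_i$ when $q_i$ is an integer.

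The key step uses the divisor characterization. Because the priority algorithm gives the last seat to the state of highest priority, there is a divisor $\lambda$ such that
\[
\frac{p_i}{\delta(a_i)}\le \lambda \le \frac{p_i}{\delta(a_i-1)}
\]
for each $i$. The left inequality holds because the next seat was not awarded; the right inequality holds because seat $a_i$ was awarded, and we interpret $\delta(-1)=0$ so that the bound is $+\infty$ when $a_i=0$. Equivalently, $\delta(a_i-1)\le p_i/\lambda\le\delta(a_i)$. For each of the listed divisor functions, $\delta(s)\in[s,s+1]$ for $s\ge0$, so this gives $a_i-1\le p_i/\lambda\le a_i+1$. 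I will argue instead directly by contradiction, since that gives the strict inequality.

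Suppose $a_1\ge\lfloor q_1\rfloor+2$, so $a_1>q_1+1$, and hence $a_2=M-a_1<q_2-1$. Let $r_1=a_1-1$ and $r_2=a_2$ be the allocations at the moment state 1 received its $a_1$-th seat. At that moment state 1 had the higher priority:
\[
\frac{p_1}{\delta(a_1-1)}>\frac{p_2}{\delta(a_2)}.
\]
Now $\delta(a_1-1)\ge a_1-1>q_1$ and $\delta(a_2)\le a_2+1<q_2$. Therefore
\[
\frac{p_1}{\delta(a_1-1)}<\frac{p_1}{q_1}=SD=\frac{p_2}{q_2}<\frac{p_2}{\delta(a_2)},
\]
which is a contradiction. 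By the symmetry of Lemma \ref{neutrality}, the same argument rules out $a_2\ge\lfloor q_2\rfloor+2$. As noted above, an upper violation for one state is equivalent to a lower violation for the other, since $a_1-q_1=-(a_2-q_2)$. Hence no violation of either kind occurs.

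The main obstacle is the bound $s\le\delta(s)\le s+1$, which holds for Adams, Jefferson, Webster, Dean and Huntington-Hill but not for an arbitrary increasing $\delta$. For Huntington-Hill and Dean I will check it via the AM-GM and harmonic-mean inequalities. For the modified case $\delta(0)=0$, the first seat of each state is forced rather than won on priority. That forced seat never creates an excess, because $a_i=1\le\lceil q_i\rceil$ whenever $q_i>0$. So the argument applies only to seats beyond the first, where $\delta(a_1-1)$ with $a_1\ge2$ is positive and the bound still holds.
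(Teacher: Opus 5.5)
The paper gives no proof of this statement; it only cites Balinski--Young. Your argument squeezes the priority of state 1's last seat and of state 2's next seat against $SD=p_1/q_1=p_2/q_2$ using $s\le\delta(s)\le s+1$, which is the standard route.

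Your caveat about that bound is essential, not just a limitation of your method. Under the paper's literal definition (any strictly increasing $\delta$) the statement is false. Take $\delta(s)=s+10$, $p=(1,2)$, $M=4$: state 2's priorities $2/(s+10)$ beat state 1's first priority $1/10$ for every $s<10$. The apportionment is therefore $(0,4)$, against quotas $(4/3,8/3)$. Balinski and Young's definition of a divisor method imposes $a\le d(a)\le a+1$ (up to a constant factor), and that is exactly the class your proof covers.

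There is, however, a gap as written. Your contradiction hypothesis $a_1\ge\lfloor q_1\rfloor+2$ is equivalent to $a_1>q_1+1$. So you only obtain $|a_i-q_i|\le 1$, not the strict inequality you said suffices. When $q_1\in\mathbb{Z}$ (hence $q_2\in\mathbb{Z}$), an upper violation is already $a_1=q_1+1$, and nothing you wrote excludes it. This case matters in the paper: Lemma \ref{d.initial=integers} relies on the two-state apportionment equalling the quota vector when both quotas are integers.

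The repair is to assume $a_1\ge\lceil q_1\rceil+1$, so that $a_1-1\ge q_1$ and $a_2+1\le q_2$. Then run your chain with weak inequalities: $p_1/\delta(a_1-1)\le SD\le p_2/\delta(a_2)$. Against the priority comparison, this forces $\delta(a_1-1)=a_1-1$ with $a_1-1\ge\lceil q_1\rceil\ge 1$ and $\delta(a_2)=a_2+1$ simultaneously. None of the five methods permits this: Adams has $\delta(a_2)=a_2$, and the other four have $\delta(s)>s$ for $s\ge 1$. This is precisely the pair $d(b)=b$, $d(c)=c+1$ that Balinski--Young's definition excludes, and the paper's no-ties assumption rules it out anyway. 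This single argument handles integral and nonintegral quotas at once.

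A smaller slip: when state 1 receives its $a_1$-th seat, state 2 need not yet hold $a_2$ seats, since it may receive seats later. It holds some $r_2\le a_2$, and monotonicity of $\delta$ gives $p_1/\delta(a_1-1)>p_2/\delta(r_2)\ge p_2/\delta(a_2)$. So your displayed inequality survives; it also follows directly from the $\lambda$-characterization you stated earlier.
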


This paper builds off of the authors' prior work. A main result is listed here for convenience which will be used later in the paper. 

\begin{theorem}\label{q.v_criteria_test} \parencite{Self} \textit{(Lower Quota Violation Criteria Test)} Let $A$ be Adams's, Dean's, or the Huntington-Hill method, with divisor function $\delta(s)$. Order the populations such that $p_1 <p_2 < p_3$ and calculate the standard quotas $q_i$. Then the apportionment $A(p_1, p_2, p_3)$ on $M$ seats has a lower quota violation if and only if all three of the following statements are true: 
\begin{enumerate}
\item $q_3 \delta(\lfloor q_1 \rfloor) < q_1 \delta( \lfloor q_3 \rfloor - 1)$
\item $q_3 \delta(\lfloor q_2 \rfloor) < q_2 \delta( \lfloor q_3 \rfloor - 1)$
\item $\lceil q_1 \rceil + \lceil q_2 \rceil + \lfloor q_3 \rfloor = M+1 $ (equivalently for $q_1, q_2 \notin \mathbb{Z}$, $\lfloor q_1 \rfloor + \lfloor q_2 \rfloor + \lfloor q_3 \rfloor = M-1) $.
\end{enumerate}
\end{theorem}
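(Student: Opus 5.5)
The statement to prove is Theorem \ref{q.v_criteria_test}. I will prove it by working directly with the sequential (priority-value) description of the divisor method, together with Theorem \ref{no_up_and_down} and Theorem \ref{no_vio_n=2}.

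\emph{Step 1: only state 3 can have a lower quota violation.} For Adams, Dean and Huntington-Hill we have $\delta(0)=0$, so every state gets at least one seat. A lower quota violation with three states must then occur at the largest state. I would argue this as follows. Suppose state $j$ ends with $a_j<\lfloor q_j\rfloor$. Since $\sum a_i=M=\sum q_i$, some other state $i$ has $a_i>q_i$. At the moment $i$ received its last seat, its priority $p_i/\delta(a_i-1)$ beat $p_j/\delta(a_j)$. Using $a_i-1\ge\lfloor q_i\rfloor$ and $a_j+1\le\lfloor q_j\rfloor$ together with the bounds $s\le\delta(s)\le s+1$, valid for all three methods, this forces $p_i$ relatively large. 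Combining this with $p_1<p_2<p_3$ leaves $j=3$ as the only possible violator; the alternative must be excluded by a short inequality check. Along the way I will also record that $a_1\ge\lfloor q_1\rfloor$ and $a_2\ge\lfloor q_2\rfloor$.

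\emph{Step 2: reduce the violation to $a_3=\lfloor q_3\rfloor-1$ and derive condition 3.} By Theorem \ref{no_up_and_down}, a lower violation at state 3 means states 1 and 2 have $a_i\le\lceil q_i\rceil$. Since $\sum a_i=M$, we get $a_3\ge M-\lceil q_1\rceil-\lceil q_2\rceil\ge\lfloor q_3\rfloor-1$, because $\lceil q_1\rceil+\lceil q_2\rceil\le q_1+q_2+2$ and the total sum is an integer. So a violation means exactly $a_3=\lfloor q_3\rfloor-1$ with $a_1=\lceil q_1\rceil$, $a_2=\lceil q_2\rceil$. This is equivalent to the counting condition 3: $\lceil q_1\rceil+\lceil q_2\rceil+\lfloor q_3\rfloor=M+1$.

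\emph{Step 3: the priority comparisons, conditions 1 and 2.} With the allocation fixed as above, I use the standard max--min characterization of divisor methods. An allocation $(a_i)$ is the divisor-method result if and only if
\[
\min_i \frac{p_i}{\delta(a_i-1)}\ \ge\ \max_j \frac{p_j}{\delta(a_j)}.
\]
The only nontrivial comparisons are between state 3 and states 1 and 2: states 1 and 2 must have earned their $\lceil q_i\rceil$-th seat before state 3 earns its $\lfloor q_3\rfloor$-th seat. This reads $p_i/\delta(\lceil q_i\rceil-1)>p_3/\delta(\lfloor q_3\rfloor-1)$ for $i=1,2$. For non-integer $q_i$ we have $\lceil q_i\rceil-1=\lfloor q_i\rfloor$, and replacing $p_i$ by $q_i$ via Lemma \ref{neutrality} gives exactly conditions 1 and 2. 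The comparison between states 1 and 2 themselves is automatically consistent: for the pair $(1,2)$, the allocation $(\lceil q_1\rceil,\lceil q_2\rceil)$ is the two-state divisor apportionment of their combined seats, and Theorem \ref{no_vio_n=2} prevents a conflict there. Conversely, if all three conditions hold, the max--min inequality is satisfied by $(\lceil q_1\rceil,\lceil q_2\rceil,\lfloor q_3\rfloor-1)$, which is a lower quota violation. This gives both directions.

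\emph{Main obstacle.} The hardest part is Step 1 together with the rigor of ``only those comparisons matter'' in Step 3. In particular, one must verify that states 1 and 2 cannot violate their lower quota, and that the cross inequality between states 1 and 2 never obstructs the allocation. For that I would lean on the explicit bounds on $\delta$ for the three methods and on Theorem \ref{no_vio_n=2}; the integer-quota edge cases would be handled separately by the no-ties assumption.
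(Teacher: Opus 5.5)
The paper does not prove this statement itself; it quotes it from the authors' earlier work, so I am judging your argument on its own terms. Your architecture is the right one: counting to force the allocation $(\lceil q_1\rceil,\lceil q_2\rceil,\lfloor q_3\rfloor-1)$, then the max--min characterization. Step 2 is correct, but Step 1 does not go through as described.

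An arbitrary state $i$ with $a_i>q_i$ may be smaller than $j$; if $\lfloor q_1\rfloor=0$ its priority is even infinite, and then nothing follows. You must first run the counting of Step 2 with the violating state $j\in\{1,2\}$ in place of state 3. This forces both other states, in particular state 3, to sit exactly at their ceilings, with $q_3\notin\mathbb{Z}$, so that $q_3/\delta(\lfloor q_3\rfloor)>q_j/\delta(\lfloor q_j\rfloor-1)$. The bounds $s\le\delta(s)\le s+1$ only turn this into $q_3/\lfloor q_3\rfloor>q_j/\lfloor q_j\rfloor$, which is perfectly consistent with $q_j<q_3$ (take $q_j=5.1$, $q_3=5.9$).

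The missing ingredient is that $(s+1)/\delta(s)$ is nonincreasing in $s$. This holds for Adams, Huntington--Hill and Dean, and is essentially the hypothesis of Theorem~\ref{36}. Since $\lfloor q_3\rfloor>\lfloor q_j\rfloor-1$, it gives
\[
\frac{q_3}{\delta(\lfloor q_3\rfloor)}<\frac{\lfloor q_3\rfloor+1}{\delta(\lfloor q_3\rfloor)}\le\frac{\lfloor q_j\rfloor}{\delta(\lfloor q_j\rfloor-1)}\le\frac{q_j}{\delta(\lfloor q_j\rfloor-1)},
\]
a contradiction. Steps 1 and 2 together are exactly the conclusion of Theorem~\ref{36}.

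In the converse half of Step 3, the appeal to Theorem~\ref{no_vio_n=2} is a non sequitur. That theorem says a two-state apportionment stays within the two-state quotas. It does not say the pair $(1,2)$ receives $(\lceil q_1\rceil,\lceil q_2\rceil)$, which is part of what you are proving. You also never check the comparisons in the other direction: state 3's $(\lfloor q_3\rfloor-1)$-th seat against the next seats of states 1 and 2.

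Both gaps close with the bounds you already have. Normalize by the standard divisor and recall that $q_1,q_2\notin\mathbb{Z}$ (see below). Then for $i=1,2$,
\[
\frac{q_i}{\delta(\lceil q_i\rceil)}\le\frac{q_i}{\lceil q_i\rceil}<1\le\frac{q_3}{\lfloor q_3\rfloor}\le\frac{q_3}{\delta(\lfloor q_3\rfloor-1)}<\frac{q_3}{\delta(\lfloor q_3\rfloor-2)}.
\]
So the largest next-seat priority is state 3's, and it is automatically below state 3's own last-seat priority. The max--min criterion therefore reduces exactly to conditions 1 and 2, and the $1$--$2$ cross comparisons follow by transitivity.

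Finally, the integer-quota cases are handled by the counting, not by the no-ties assumption. Condition 3, and likewise any lower violation, forces $q_1,q_2\notin\mathbb{Z}$, and that is what justifies replacing $\lceil q_i\rceil-1$ by $\lfloor q_i\rfloor$.
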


\begin{remark}
By repeating the arguments used in \parencite{Self}, the above test still holds for modified Webster's and modified Jefferson's methods.
\end{remark}

An immediate extension of Theorem 5.5 in \parencite{Self} is:

\begin{theorem}\label{36}
Let $A$ be a divisor method with divisor function $\delta$ satisfying $\delta(0) = 0$ and $\dfrac{x+1}{\delta(\lfloor x \rfloor)}$ is a decreasing function for $x \in [1, \infty)$. Then, if $A(q_1, q_2, q_3)$ produces a quota violation with $q_1<q_2<q_3$, $A(q_1, q_2, q_3) =(\lceil q_1 \rceil, \lceil q_2 \rceil, \lfloor q_3 \rfloor -1)$. In particular, this holds for Huntington-Hill's, Adams', Dean's, modified Jefferson's, and modified Webster's methods.
\end{theorem}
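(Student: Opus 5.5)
The statement to prove is Theorem \ref{36}: under the hypotheses on $\delta$, any quota-violating apportionment of $(q_1,q_2,q_3)$ equals $(\lceil q_1\rceil,\lceil q_2\rceil,\lfloor q_3\rfloor-1)$. The plan is to pin down which kind of violation can occur and which state suffers it, and then read off the exact allocation from a counting argument.

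\textbf{Step 1: only a lower violation of the largest state is possible.} First rule out upper quota violations. The hypothesis that $\frac{x+1}{\delta(\lfloor x\rfloor)}$ is decreasing on $[1,\infty)$, together with $\delta(0)=0$, is exactly what the argument of Theorem 5.5 in \parencite{Self} uses for this. Suppose state $i$ receives its $(\lceil q_i\rceil+1)$-th seat. At that moment its priority $q_i/\delta(\lceil q_i\rceil)$ must exceed the priority of every other state $j$ holding $r_j\le \lfloor q_j\rfloor$ seats; some such $j$ must exist, since otherwise the total would exceed $M$. Comparing $q_i/\delta(\lceil q_i\rceil)$ with $q_j/\delta(\lfloor q_j\rfloor)$ via the monotonicity hypothesis gives a contradiction, provided $\lceil q_i\rceil\ge 1$. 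The seat forced by $\delta(0)=0$ is harmless here, because every $q_i>0$ has $\lceil q_i\rceil\ge1$. So any violation is a lower one, and by Theorem \ref{no_up_and_down} no state exceeds its upper quota.

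Next, show the lower violation must fall on state 3. A state with $q_i<1$ has lower quota $0$ and cannot be violated. For the two smaller states, use the priority comparison: if state $i<3$ ends with $a_i\le\lfloor q_i\rfloor-1$, then some state $j$ got a seat beyond its lower quota while $i$ was waiting. That requires
\[
\frac{q_j}{\delta(a_j-1)}>\frac{q_i}{\delta(\lfloor q_i\rfloor-1)},
\]
and the decreasing-ratio hypothesis rules this out when $q_i<q_j$. The case where $j$ is a smaller state is handled symmetrically. Concretely, I would write the priorities as $q\,/\,\delta(\cdot)$ and bound them by $(x+1)/\delta(\lfloor x\rfloor)$ evaluated at the relevant quotas.

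\textbf{Step 2: counting.} With no upper violations, $a_1\le\lceil q_1\rceil$, $a_2\le\lceil q_2\rceil$ and $a_3\le\lfloor q_3\rfloor-1$. Since $\sum q_i=M$, we have $\lceil q_1\rceil+\lceil q_2\rceil+\lfloor q_3\rfloor\le M+2$. The deficit is at least one seat at state 3 and must be absorbed by states 1 and 2, each of which can rise at most to its upper quota. I would then show state 3 is short by exactly one. Were it short by two, the apportionment would be unchanged after removing a seat (house monotonicity of divisor methods) and the smaller instance would be a violation of the same kind; iterating, Theorem \ref{no_vio_n=2}-style arguments or the priority comparison above give a contradiction. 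With $a_3=\lfloor q_3\rfloor-1$, the equality $a_1+a_2=M-\lfloor q_3\rfloor+1\ge\lceil q_1\rceil+\lceil q_2\rceil$ forces $a_1=\lceil q_1\rceil$ and $a_2=\lceil q_2\rceil$. This matches condition 3 of Theorem \ref{q.v_criteria_test}.

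\textbf{Step 3: the listed methods.} Verify the hypothesis for each method. For Adams, $(x+1)/\lfloor x\rfloor$; for modified Jefferson and Webster, the shifted floors; for Dean and Huntington-Hill, the explicit $\delta$. In each case the function is piecewise decreasing (it decreases within each integer interval) and drops at each integer jump. This should be a routine check.

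The main obstacle is Step 1, the exclusion of lower violations at states 1 and 2, together with the ``short by exactly one'' claim. These carry the real content of Theorem 5.5 in \parencite{Self}, and I would follow its priority-comparison argument closely.
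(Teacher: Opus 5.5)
\textbf{The main gap is the first half of Step 1, the exclusion of upper quota violations.}

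The pairwise inequality you rely on, $q_i/\delta(\lceil q_i\rceil)\le q_j/\delta(\lfloor q_j\rfloor)$, is false in general. Already for Webster, $q_i=10.9$ and $q_j=5.1$ give $10.9/11.5>5.1/5.5$.

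For modified Jefferson the theorem itself fails. Take $M=7$ and quotas $(1.49,1.52,3.99)$. After the guaranteed seats, state 3's priorities $3.99/2$, $3.99/3$, $3.99/4$ and $3.99/5\approx 0.798$ all exceed $1.52/\delta(1)=0.76$. So the apportionment is $(1,1,5)$, an upper violation since $5>\lceil 3.99\rceil$, and not $(2,2,2)$. Modified Jefferson is explicitly covered by the statement and satisfies the integer form of the hypothesis, $(k+1)/\delta(k)\equiv 1$ for $k\ge 1$. So no argument driven by that monotonicity can exclude upper violations.

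For Adams, Webster, Huntington--Hill and Dean the exclusion is true, but it needs a different idea. If $a_i\ge\lceil q_i\rceil+1$, the other two states are jointly short by at least one seat, so one of them, $j$, has $a_j\le q_j-\tfrac12$. The two-sided bound $n\le\delta(n)\le n+\tfrac12$ holds for these methods, strictly on at least one side for each. It makes $j$'s priority at least $q_j/\delta(a_j)\ge 1\ge q_i/\delta(\lceil q_i\rceil)$, with one inequality strict, so $i$ never wins that seat. This argument uses both other states at once, and it uses a bound that $\delta(n)=n+1$ violates.

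\textbf{Three further steps need repair.}

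First, in locating the lower violation, it is not enough that ``some state $j$'' passed its lower quota. The case of a smaller $j$ also cannot be ``handled symmetrically'': smaller states do beat state 3, and that is exactly the violation in the conclusion. A symmetric argument would therefore rule out all lower violations. Instead, once upper violations are excluded, $a_j+a_k\ge q_j+q_k+1+\{q_i\}$ forces both other states to their upper quotas. In particular, state 3 won its last seat while $i<3$ held at most $\lfloor q_i\rfloor-1$ seats. With $h(k)=(k+1)/\delta(k)$ non-increasing, the chain $q_3/\delta(\lfloor q_3\rfloor)<h(\lfloor q_3\rfloor)\le h(\lfloor q_i\rfloor-1)\le q_i/\delta(\lfloor q_i\rfloor-1)$ gives the contradiction.

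Second, note that this uses the integer sequence $h$, not the function in the statement. Your Step 3 claim that $(x+1)/\delta(\lfloor x\rfloor)$ decreases on each integer interval is wrong. On $[n,n+1)$ it equals $(x+1)/\delta(n)$, which increases, so the hypothesis as literally written holds for no method.

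Third, the house-monotonicity detour in Step 2 is unnecessary. The bound $a_1+a_2\le\lceil q_1\rceil+\lceil q_2\rceil<q_1+q_2+2$ gives $a_3\ge\lfloor q_3\rfloor-1$ directly. Your bound $\le M+2$ is too weak for the last step; you need that $\lceil q_1\rceil+\lceil q_2\rceil+\lfloor q_3\rfloor$ is an integer strictly below $M+2$. With that, $a_1=\lceil q_1\rceil$ and $a_2=\lceil q_2\rceil$ follow.

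The paper gives no argument beyond calling the result an extension of Theorem 5.5 of the authors' earlier work, so it cannot be used to patch these steps. What you can actually prove is the statement for Adams, Webster, Huntington--Hill and Dean, not for modified Jefferson.
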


\section{Asymptotic Quota Stabilization via the $\tau$-Statistic}

This section introduces a new statistic, $\tau$, which is useful for studying quota violations and quota violations caused by nonzero allocations. For simplicity, the section begins by discussing three states, and later explains how these results extend to $n$ states.

\subsection{Defining Relative Skewness for $n=3$ States}

\begin{definition}\label{tau_def} Let $(1,x,y)$, $1<x<y$, be the associated reduced population vector for a three state distribution ($p_1, p_2, p_3$). Define the $\tau$ statistic as follows: 
$$ \tau := \frac{\mathrm{mean}(1,x,y)-\mathrm{median}(1,x,y)}{\max(1,x,y)}. $$
\end{definition}
\begin{lemma}\label{tau=proportional}
The statistic $\tau$ is preserved under scaling: $\tau (1,x,y) = \tau (\lambda , \lambda x, \lambda y)$ for $\lambda \in \mathbb{R}_{>0}$. As such, $\tau$ can be calculated for any three state distribution, not just reduced population vectors.
\end{lemma}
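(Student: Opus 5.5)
The plan is to check directly that mean, median, and max each scale linearly under multiplication by a positive constant, so that the ratio defining $\tau$ is unchanged. Fix $\lambda>0$ and $1<x<y$. Since multiplication by $\lambda>0$ preserves order, the ordering $1<x<y$ becomes $\lambda<\lambda x<\lambda y$.

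The key steps are the following. First, $\mathrm{mean}(\lambda,\lambda x,\lambda y)=\frac{\lambda+\lambda x+\lambda y}{3}=\lambda\,\mathrm{mean}(1,x,y)$ by linearity of the sum. Second, because the order is preserved, the middle entry is $\lambda x$, so $\mathrm{median}(\lambda,\lambda x,\lambda y)=\lambda x=\lambda\,\mathrm{median}(1,x,y)$. Third, for the same reason $\max(\lambda,\lambda x,\lambda y)=\lambda y=\lambda\max(1,x,y)$. Substituting these into the definition of $\tau$ (Definition \ref{tau_def}) gives
$$\tau(\lambda,\lambda x,\lambda y)=\frac{\lambda\,\mathrm{mean}(1,x,y)-\lambda\,\mathrm{median}(1,x,y)}{\lambda\max(1,x,y)}=\tau(1,x,y),$$
after cancelling $\lambda\neq 0$.

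For the second assertion, I would take an arbitrary three-state distribution with distinct positive populations. Using Lemma \ref{neutrality}, I relabel so that $p_1<p_2<p_3$, which is harmless because mean, median, and max are all symmetric functions. Choosing $\lambda=p_1$ then writes $(p_1,p_2,p_3)=\lambda\,(1,x,y)$ with $x=p_2/p_1$ and $y=p_3/p_1$. By the scaling identity just proved, $\tau(p_1,p_2,p_3)$ is well defined and equals $\tau$ of the reduced population vector.

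This argument has no genuine obstacle. The only point needing care is that $\lambda$ must be strictly positive, so that the order is preserved and the median and max are still attained at the scaled versions of $x$ and $y$.
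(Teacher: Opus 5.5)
Your proof is correct and matches the paper's: the paper substitutes $(\lambda,\lambda x,\lambda y)$ into $\tau=\frac{\frac{1+x+y}{3}-x}{y}$ and cancels $\lambda$, which is your observation that mean, median, and max all scale by $\lambda>0$. Your final step, taking $\lambda=p_1$ to handle an arbitrary distribution, spells out the second assertion, which the paper leaves implicit.
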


\begin{proof}
$$\tau(\lambda,\lambda x,\lambda y)= \frac{\frac{\lambda + \lambda x+ \lambda y}{3} - \lambda x}{\lambda y} =\frac{ \frac{1+x+y}{3} - x}{y} =\tau(1,x,y). $$
\end{proof}

\begin{lemma}\label{tau_family} The statistic $\tau$ parametrizes a family of population distributions dependent on $x$. Specifically, this is the set of reduced population vectors associated with a fixed $\tau$: $\{ (1,x,y(x,\tau)) \mid x \in \mathbb{R}, x>1 \}$, where $y$ is a linear function of $x$. \end{lemma}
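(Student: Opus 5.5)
The plan is to unwind Definition \ref{tau_def} for a reduced vector $(1,x,y)$ with $1<x<y$ and solve for $y$ in terms of $x$ and $\tau$. Since $1<x<y$, the median is $x$ and the maximum is $y$, so
$$\tau=\frac{\frac{1+x+y}{3}-x}{y}=\frac{1-2x+y}{3y}.$$
Clearing denominators gives $3\tau y = 1-2x+y$, that is, $y(1-3\tau)=2x-1$. For $\tau\neq \frac13$ this becomes
$$y(x,\tau)=\frac{2x-1}{1-3\tau},$$
which is affine in $x$, as the statement claims.

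Next I will pin down the admissible range of $\tau$, to justify restricting to $\tau\in(-\frac13,\frac13)$ as the later results do. Writing $\tau=\frac13+\frac{1-2x}{3y}$ and using $1<x<y$, we have $1-2x<0$, so $\tau<\frac13$. Moreover $\frac{2x-1}{y}<\frac{2y}{y}=2$, so $\tau>\frac13-\frac23=-\frac13$. In particular $1-3\tau>0$, so the division above is legitimate.

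Conversely, I will show that for fixed $\tau\in(-\frac13,\frac13)$ the triple $(1,x,y(x,\tau))$ is a genuine ordered reduced vector for the relevant $x$, so the family is exactly the $\tau$-level set. We need $y>x$, which is equivalent to $2x-1>(1-3\tau)x$, i.e.\ $x(1+3\tau)>1$, i.e.\ $x>\frac{1}{1+3\tau}$.

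This ordering condition is the main subtlety. For $\tau\ge 0$ it holds for every $x>1$. For $\tau<0$ it requires $x>\frac1{1+3\tau}$, which exceeds $1$, so the family is really parametrized by $x$ beyond that threshold. I will note this explicitly, while observing that it still gives a one-parameter family in $x$ with $y$ linear in $x$. By Lemma \ref{tau=proportional}, every three-state distribution reduces to such a vector, so nothing is lost. Scaling invariance also means each member represents a whole ray of population distributions.
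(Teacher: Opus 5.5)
Your proposal is correct and follows the same route as the paper's proof: substitute median $x$ and maximum $y$ into the definition of $\tau$, then solve $3\tau y = 1-2x+y$ to get $y=\frac{2x-1}{1-3\tau}$. You are more careful than the paper, which does only this forward algebra. You justify dividing by $1-3\tau$ (the paper proves $|\tau|<\frac13$ only afterwards, in Lemma \ref{t_thirds}), and your converse check correctly shows that for $\tau<0$ the family must be restricted to $x>\frac{1}{1+3\tau}$ rather than all $x>1$, a restriction the lemma as stated overlooks.
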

\begin{proof} 
Fix $\tau$. Then, for some $(1,x,y)$ with $1<x<y$, by definition
$$ \tau= \frac{ \frac{1+x+y}{3} - x}{y} $$
Solving for $y$ gives
$$ y(x,\tau)= \frac{2x-1}{1-3\tau} = \frac{2}{1-3\tau}x - \frac{1}{1-3\tau}. $$
Thus, for a fixed $\tau$, $y$ is a linear function of $x$ and  the set of reduced population vectors forms a one-dimensional family in $\{(x, y)\, |\, 1<x<y\}$.
\end{proof}

\noindent The last lemma allows us to not think of reduced population vectors $(1,x,y)$ as independent triplets but instead as parametrized rays, with a linear relationship between $x$ and $y$, in the space of allowable reduced population vectors on the $xy$-plane.

\begin{figure}[H]
 \centering
 \includegraphics[width=0.66\linewidth]{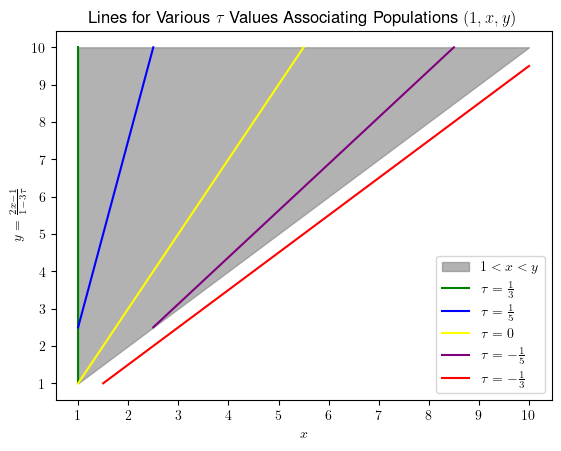}
 \caption{Geometric interpretation of $\tau$}
\end{figure}

\subsection{Properties and Bounds}
\begin{lemma}\label{t_thirds} For any three state distribution with reduced population $(1,x,y)$, with $1 < x < y$, the statistic $\tau$ satisfies the strict inequality $|\tau| < \frac{1}{3}$. \end{lemma}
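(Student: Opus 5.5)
We work directly from the explicit formula obtained in the proof of Lemma \ref{tau=proportional}: for the reduced vector $(1,x,y)$ with $1<x<y$ the median is $x$ and the maximum is $y$, so
$$\tau = \frac{\frac{1+x+y}{3}-x}{y} = \frac{1-2x+y}{3y} = \frac13 + \frac{1-2x}{3y}.$$
The plan is to bound the correction term $\frac{1-2x}{3y}$ strictly between $-\frac23$ and $0$, using only the ordering $1<x<y$. This gives the two strict inequalities separately.

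For the upper bound, $x>1$ gives $1-2x<-1<0$. Since $y>0$, the term $\frac{1-2x}{3y}$ is strictly negative, and therefore $\tau<\frac13$.

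For the lower bound, we need $\frac{1-2x}{3y}>-\frac23$. This is equivalent to $1-2x>-2y$, that is, $2(y-x)+1>0$, which holds because $y>x$. Hence $\tau>-\frac13$. Equivalently, we can clear denominators in $\tau>-\frac13$ to get $1-2x+y>-y$, i.e. $2y-2x+1>0$. Combining the two bounds yields $|\tau|<\frac13$.

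There is no real obstacle here; the only point needing care is that strictness comes from the strict inequalities $x>1$ and $y>x$. As a remark, the bounds are sharp: along $x\to y$ with $y\to\infty$, $\tau\to -\frac13$, and for fixed $x$ with $y\to\infty$, $\tau\to\frac13$. This justifies the open interval $(-\frac13,\frac13)$ used in Theorem \ref{tau_limit_thm1}. It is also consistent with Lemma \ref{tau_family}, since $1-3\tau>0$ is exactly what makes $y(x,\tau)$ well defined and positive-sloped.
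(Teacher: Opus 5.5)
Your proof is correct. The upper bound is the same as the paper's: $1-2x<0$ makes the correction term $\frac{1-2x}{3y}$ negative.

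For the lower bound you take a more direct route. You clear denominators and reduce $\tau>-\frac13$ to $2(y-x)+1>0$. The paper instead argues by monotonicity and limits: $\tau$ decreases in $x$, so over $x<y$ it is bounded below by its value at $x=y$, namely $\frac{1-y}{3y}=\frac{1}{3y}-\frac13$. This tends to $-\frac13$ as $y\to\infty$.

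Your version makes strictness immediate. In fact strictness comes from the constant $+1$, so even $x\le y$ would suffice. The paper's limit statement by itself only identifies $-\frac13$ as the infimum, and needs the extra observation $\frac{1}{3y}-\frac13>-\frac13$ to give a strict bound. On the other hand, the paper's argument shows directly that the bound is sharp, which you recover in your closing remark.
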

\begin{proof} Since $x > 1, 1 - 2x < -1$, and in particular, $1-2x$ is strictly negative. Then from the definition of $\tau,$

$$\tau = \dfrac{\dfrac{1+x+y}{3} - x}{y}= \dfrac{1 + x + y - 3x}{3y} =\dfrac{1 - 2x + y}{3y} = \dfrac{1}{3} + \dfrac{1-2x}{3y}< \dfrac{1}{3}.$$

To show the lower bound for $\tau$, start with the simplified expression $\tau = \dfrac{1 - 2x + y}{3y}$ and notice that $\tau$ decreases as $x$ increases. Since $x < y$, as $x \rightarrow y^-, \tau \rightarrow \dfrac{1-2y+y}{3y} = \dfrac{1-y}{3y} = \dfrac{1}{3y}-\dfrac{1}{3}.$ As $y \rightarrow \infty$, $\tau \rightarrow -\dfrac{1}{3}$. Thus $\tau > -\frac{1}{3}$ for any $1<x<y$.

\end{proof}

\begin{remark}\label{xtau_meaning}
The value of $\tau$ effectively measures the relative position of the median $x$ between the minimum and the maximum values. When $\tau$ is close to its upper bound of $\frac{1}{3}$, $x$ is as small as possible (close to 1.) This represents a right-skewed population distribution. Similarly, when $\tau$ is close to its lower bound of $-\frac{1}{3}$, $x$ is as large as possible (close to $y$), and represents a left-skewed population vector. When $\tau = 0$, the mean and median are equal, and from the formula for $y$ in terms of $x$ and $\tau$, $y = 2x-1$. Equivalently, $x = \frac{1 + y}{2}$, the average of 1 and $y$. 

The initial space of population vectors is three-dimensional: $(p_1, p_2,p_3).$ Scaling reduces this space to two dimensions: $(1,x,y)$. Introducing $\tau$, a measure of skewness, replaces $y$, a measure of the relative size of the largest state to the smallest state. 
 
\end{remark}

\subsection{Asymptotic Stability and the Apportionment Limit}
Since the reduced population vector $(1,x,y(x,\tau))$ depends continuously on $x$ (and on $\tau$), the next theorem will show the limiting behavior as $x \to \infty$ is determined entirely by taking the limit of the vector of standard quotas, except possibly at the finitely many $\tau$-values where priority values become equal. 

\begin{theorem}\label{tau_limit_thm1}(Asymptotic Apportionment) Let $(1,x,y(x,\tau))$ be a reduced population vector associated with a fixed $\tau$, and let $A$ be a divisor method that guarantees nonzero allocations on $M$ seats. Then, for all but finitely many $\tau \in (-\frac13,\frac13)$: $$\displaystyle{\lim_{x \to \infty} A(1,x,y(x,\tau)) = \tilde{A} \left( \frac{M(1-3\tau)}{3-3\tau},\frac{2M}{3-3\tau} \right)},$$ where $\tilde{A}$ is defined as apportioning the remaining $M-1$ seats among states 2 and 3 according to the divisor method $A$ (see definition \ref{stealing_def}).
\end{theorem}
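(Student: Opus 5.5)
By neutrality and proportionality (Lemma \ref{neutrality}), I will replace the reduced vector $(1,x,y(x,\tau))$ by its standard quotas. Write $y = y(x,\tau) = \frac{2x-1}{1-3\tau}$ from Lemma \ref{tau_family}. The total population is
$$P(x) = 1 + x + y = \frac{(3-3\tau)x - 3\tau}{1-3\tau}.$$
The quotas are then $q_1 = M/P(x)$, $q_2 = Mx/P(x)$ and $q_3 = My/P(x)$. Letting $x\to\infty$ gives $q_1 \to 0$, together with
$$q_2 \to \frac{M(1-3\tau)}{3-3\tau}, \qquad q_3 \to \frac{2M}{3-3\tau}.$$
These limits are exactly the arguments $\tilde q_2, \tilde q_3$ on the right-hand side. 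Since apportionments take values in a finite set, the plan is to show that the apportionment is eventually constant in $x$ and equal to the stated vector.

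Next I will describe the run of the divisor method. Since $\delta(0)=0$, state 1 starts with one seat. Its next priority value is $p_1/\delta(1) = q_1\cdot(P/M)/\delta(1)$, and after rescaling by $M/P$ this equals $q_1/\delta(1) \to 0$. Meanwhile states 2 and 3 start with one seat each, and their rescaled priorities $q_j/\delta(r_j)$, for $r_j \le M$, are bounded below by a positive constant depending only on $\tau$ and $M$, since $\delta(r) \le \delta(M)$. So for $x$ large enough, state 1 never wins a round, and it ends with $a_1 = 1$. The remaining $M-1$ seats are then handed out between states 2 and 3 by exactly the same priority comparisons as $A(q_2(x), q_3(x))$ on $M-1$ seats, each starting with their guaranteed seat. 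By Definition \ref{stealing_def}, this computation is $\tilde A(q_1(x), q_2(x), q_3(x))$.

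The key remaining step is continuity. The two-state apportionment of $M-1$ seats is determined by finitely many strict comparisons of the form $q_2/\delta(r) \gtrless q_3/\delta(s)$, with $r,s \le M$. Each comparison is equivalent to a sign condition on $x/y(x,\tau) - \delta(r)/\delta(s)$. As $x\to\infty$, the ratio $x/y$ tends to $(1-3\tau)/2$, and it does so monotonically because $y$ is linear in $x$. Consider the case where the limiting ratio $(1-3\tau)/2$ differs from every one of the finitely many values $\delta(r)/\delta(s)$ with $r,s\le M$. Then every comparison has the same strict outcome for all large $x$ as at the limit. Hence $A$ on the finite-$x$ quotas agrees with $\tilde A(\tilde q_2, \tilde q_3)$ for all large $x$, which gives the claimed limit of the (eventually constant) apportionment.

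The exceptional $\tau$ are those with $(1-3\tau)/2 = \delta(r)/\delta(s)$ for some $r,s \le M$, meaning the limiting priorities tie. There are at most $(M+1)^2$ of them, and that is where the statement "all but finitely many $\tau$" comes from. I expect the main care to be needed in this tie-handling step. At an exceptional $\tau$, the approach to the limit could fall on either side of the tie, or the limit itself would need a tie-breaking rule, so these values are simply excluded. A second point of care is the detail that the threshold on $x$ beyond which state 1 stops winning must be uniform over the at most $M$ rounds. This is routine, since there are finitely many rounds and finitely many priority values.
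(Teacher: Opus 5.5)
Your proposal is correct and follows the same route as the paper. Both compute the limiting quotas $(0,\tilde q_2,\tilde q_3)$, exclude the finitely many $\tau$ where limiting priority values tie, and note that state 1 keeps only its guaranteed seat, so the remaining $M-1$ seats are split between states 2 and 3 exactly as at the limit. Your version is somewhat more explicit than the paper's appeal to local constancy of $A$ near the boundary point $(0,\tilde q_2,\tilde q_3)$: you bound state 1's priority directly and identify the exceptional set as $\{\tau : (1-3\tau)/2 = \delta(r)/\delta(s),\ 1\le r,s\le M\}$.
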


\begin{proof}
For a fixed $\tau$ and reduced population vector $(1,x,y(x,\tau))=\left( 1,x,\frac{2x-1}{1-3\tau} \right)$, the total population is $P = 1 + x + \frac{2x-1}{1-3\tau} = \frac{(3-3\tau)x - 3\tau}{1-3\tau}.$ Calculate the standard quotas $q_i = \frac{Mp_i}{P}$:
$$(q_1, q_2, q_3) = \left( \frac{M(1-3\tau)}{(3-3\tau)x-3\tau} , \frac{M(1-3\tau)x}{(3-3\tau)x-3\tau} , \frac{M(2x-1)}{(3-3\tau)x-3\tau} \right)$$

\noindent Then, as $x \to \infty$
$$ (q_1, q_2, q_3) \to\left( 0 , \frac{M(1-3\tau)}{3-3\tau} , \frac{2M}{3-3\tau} \right). $$

Divisor methods are piecewise constant functions on open regions of the population space. The boundaries of these regions are hypersurfaces defined by equalities of the priority values, $\frac{p_i}{\delta(k)} = \frac{p_j}{\delta(l)}$, with $0 \leq k,l \leq M-1$. The boundaries cause a discontinuity in $A$ and a change in apportionment. The reduced population vector $(1,x,y)$ depends continuously on $\tau$ (and on $x$). 
There are finitely many such equations, each rational functions in $\tau$. After clearing denominators, each condition yields a linear equation in $\tau$ and has at most one solution. Therefore, the exceptional set of $\tau$ is finite. Define this set as $\mathcal{E}$.

For $\tau \not \in \mathcal{E}$, $A$ is constant on a neighborhood near $\left( 0 , \frac{M(1-3\tau)}{3-3\tau} , \frac{2M}{3-3\tau} \right)$, and therefore
$$
\begin{array}{cl}
\displaystyle\lim_{x \to \infty}A(1, x,y(x,\tau)) & = \displaystyle\lim_{x \to \infty}A(q_1, q_2,q_3) \\ [9pt]
 & =A(\displaystyle\lim_{x \to \infty}(q_1, q_2,q_3)) \\ [9pt]
 & =A \left( 0 , \frac{M(1-3\tau)}{3-3\tau}, \frac{2M}{3-3\tau} \right).
\end{array}
$$
As $A$ guarantees strictly positive apportionments, state one receives at least one seat. Because $q_1(x)\to 0$, state 1's priorities become negligible so state one only receives one seat and the remaining $M-1$ seats are apportioned exactly as if $A$ were applied only to states $2$ and $3$. This is precisely the definition of $\tilde{A} \left( \frac{M(1-3\tau)}{3-3\tau},\frac{2M}{3-3\tau}\right),$ completing the proof.
\end{proof}

In what follows, we introduce new notation for the limiting standard quotas: 
$$
\tilde{q}_2 :=\frac{M(1-3\tau)}{3-3\tau}, \qquad 
\tilde{q}_3 :=\frac{2M}{3-3\tau}.
$$

For $\tau \not \in \mathcal{E}$, both $\tilde{q}_2$ and $\tilde{q}_3$ are nonintegral limit points of the standard quotas $q_2(x)$ and $q_3(x)$. As such, their integer parts must stabilize. Consequently, the asymptotic quota behavior of the full three-state apportionment is determined entirely by which of states 2 or 
3 receives the final seat from $\tilde{A}.$ The following theorem formalizes this stabilization and shows that, beyond a finite threshold, quota compliance or violation is completely determined by $\tau$.

\begin{theorem}\label{X_tau_def/exist} (Asymptotic Quota Stabilization for Nonexceptional $\tau$) Let $\mathcal{E}$ be the exceptional set of finite $\tau \in (-\frac 13,\frac 13)$ such that Theorem \ref{tau_limit_thm1} fails. Fix $\tau \notin \mathcal{E}$. Then there exists some $x_\tau$ such that for all $x \geq x_\tau$, $A(1,x,y(x,\tau))$ is always a lower quota violation caused by nonzero allocation or always not a violation, with this behavior determined by $\tau$. 

Specifically, let state $i \in \{2,3\}$ be the state that receives the last seat when apportioning $M$ seats among only states $2$ and $3$ from $A(\tilde{q}_2, \tilde{q}_3)$. 
Then: 
\begin{enumerate}
 \item If $A(\tilde{q}_2, \tilde{q}_3)$ assigns state $i$ its lower quota 
 $\lfloor \tilde{q}_i \rfloor$, then for all $x \geq x_\tau$, the apportionment 
 $A(1, x, y(x,\tau))$ is a lower quota violation caused by nonzero allocations, 
 with state $i$ receiving $\lfloor \tilde{q}_i \rfloor - 1$ seats.
 \item If $A(\tilde{q}_2, \tilde{q}_3)$ assigns state $i$ its upper quota 
 $\lceil \tilde{q}_i \rceil$, then for all $x \geq x_\tau$, the apportionment 
 $A(1, x, y(x,\tau))$ is not a quota violation.
\end{enumerate}
\end{theorem}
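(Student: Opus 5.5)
The plan is to combine the limiting apportionment from Theorem \ref{tau_limit_thm1} with the stabilization of the integer parts of the standard quotas. Fix $\tau\notin\mathcal{E}$. As computed in the proof of Theorem \ref{tau_limit_thm1}, $q_1(x)\to 0$, $q_2(x)\to\tilde q_2$ and $q_3(x)\to\tilde q_3$. Since $\tau\notin\mathcal{E}$, neither $\tilde q_2$ nor $\tilde q_3$ is an integer. We may enlarge $\mathcal{E}$ by the finitely many $\tau$ with $\tilde q_i\in\mathbb{Z}$; these conditions are again linear in $\tau$. So there is a threshold beyond which the following three things hold.
\begin{enumerate}
\item $\lfloor q_2(x)\rfloor=\lfloor\tilde q_2\rfloor$ and $\lfloor q_3(x)\rfloor=\lfloor\tilde q_3\rfloor$.
\item $0<q_1(x)<1$, so that $\lfloor q_1\rfloor=0$ and $\lceil q_1\rceil=1$.
\item $A(1,x,y(x,\tau))$ equals its limit $\tilde A(\tilde q_2,\tilde q_3)$. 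This holds because $A$ is piecewise constant and integer valued, so convergence forces eventual equality.
\end{enumerate}
Let $x_\tau$ be the maximum of these thresholds. Note that $\tilde q_2+\tilde q_3=M$ with both nonintegral, so $\lfloor\tilde q_2\rfloor+\lfloor\tilde q_3\rfloor=M-1$.

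Next I would identify the asymptotic apportionment explicitly. By Theorem \ref{no_vio_n=2}, $A(\tilde q_2,\tilde q_3)$ on $M$ seats gives each of states 2 and 3 either its floor or its ceiling. The total is $M$, so exactly one state, say $j$, gets its ceiling and the other gets its floor. Among the $M$ seats handed out sequentially, let state $i$ receive the last one. Then $\tilde A(\tilde q_2,\tilde q_3)$ on $M-1$ seats is the same sequence truncated one step earlier, so it is $A(\tilde q_2,\tilde q_3)$ with one seat removed from state $i$. This is the key structural fact, and it relies on the house-monotone, sequential nature of the divisor algorithm.

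The two cases now separate cleanly.
\begin{enumerate}
\item If state $i$ was assigned $\lfloor\tilde q_i\rfloor$ in $A(\tilde q_2,\tilde q_3)$, then for $x\ge x_\tau$ state $i$ receives $\lfloor\tilde q_i\rfloor-1=\lfloor q_i(x)\rfloor-1$ seats. This is a lower quota violation. It is caused by nonzero allocations in the sense of Definition \ref{vio_as_not0allo_def}:
\begin{itemize}
\item condition 1 is the violation just found;
\item condition 2 holds because the apportionment equals $\tilde A$ by construction;
\item condition 3 holds by Theorem \ref{no_vio_n=2} applied to the two states on $M$ seats.
\end{itemize}
\item If state $i$ was assigned $\lceil\tilde q_i\rceil$, removing one seat gives it $\lfloor\tilde q_i\rfloor$, while the other state keeps its floor. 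Together with $a_1=1=\lceil q_1\rceil$, every state lies within its quota, so there is no violation.
\end{enumerate}
In both cases we could cross-check against Theorem \ref{36}, which predicts the shape $(\lceil q_1\rceil,\lceil q_2\rceil,\lfloor q_3\rfloor-1)$ whenever a violation occurs.

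The main obstacle is justifying that the $(M-1)$-seat apportionment of states 2 and 3 is the $M$-seat one minus the last seat. This requires interpreting $\tilde A$ with populations proportional to $(\tilde q_2,\tilde q_3)$. It uses only the sequential definition of the algorithm and the no-ties assumption, which $\tau\notin\mathcal{E}$ guarantees. A second subtle point is condition 3 of Definition \ref{vio_as_not0allo_def}: it must be read as applying $A$ to the two-state population $(p_2,p_3)$ on $M$ seats with their own two-state quotas. These quotas converge to $\tilde q_2,\tilde q_3$, so Theorem \ref{no_vio_n=2} settles it directly.
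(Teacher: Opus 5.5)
Your proposal is correct and follows the paper's proof: eventual equality with $\tilde A(\tilde q_2,\tilde q_3)$ from Theorem \ref{tau_limit_thm1}, stabilization of the quota floors, and a case split on whether the last seat in the two-state $M$-seat apportionment went to a floor or a ceiling. Two of your additions are tightenings rather than departures. First, your explicit exclusion of the finitely many $\tau$ with $\tilde q_i\in\mathbb{Z}$ is genuinely needed; the paper only asserts it follows from $\tau\notin\mathcal{E}$, yet for integral limits $\lfloor\tilde q_i\rfloor=\lceil\tilde q_i\rceil$, so both cases of the statement apply at once with contradictory conclusions. Second, your threshold guaranteeing $0<q_1(x)<1$ is something the paper's proof leaves implicit.
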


\begin{proof}
Theorem \ref{tau_limit_thm1} implies 
$$ \lim_{x \to \infty} A(1,x,y(x,\tau)) = \tilde{A}\left( \frac{M(1-3\tau)}{3-3\tau} , \frac{2M}{3-3\tau} \right) := \tilde{A}(\tilde{q}_2, \tilde{q}_3) $$
By the definition of a limit, there must exist some point in which $A(1,x,y(x,\tau))$ is epsilon close to $\tilde{A}(\tilde{q}_2,\tilde{q}_3)$. Since $A$ and $\tilde{A}$ only output integer vectors in $\mathbb{Z}^3$, convergence implies there exists some $x_{\tau}^A$ such that 
$$A(1,x,y(x,\tau)) = \tilde{A}(\tilde{q}_2,\tilde{q}_3) \qquad \text{for all } x \geq x^A_{\tau}.$$
Since $\tau \notin \mathcal{E}$, the limiting quotas
$$\tilde{q}_2=\frac{M(1-3\tau)}{3-3\tau}, \qquad 
\tilde{q}_3=\frac{2M}{3-3\tau}$$
are not integers so each lies at positive distance from the nearest integer. Because the corresponding standard quotas $(q_2(x),q_3(x))$ converge to $(\tilde{q}_2,\tilde{q}_3)$ as $x\to\infty$, there exists $\delta>0$ and $x_{\tau}^F$ such that for all $x \ge x_{\tau}^F$ the varying quotas remain within $\delta$ of their limits and therefore lie in the same unit intervals as $\tilde{q}_2$ and $\tilde{q}_3$. It follows that for all $x \ge x_{\tau}^F$,
$$\lfloor q_2(x) \rfloor = \lfloor \tilde{q}_2 \rfloor 
\qquad\text{and}\qquad
\lfloor q_3(x) \rfloor = \lfloor \tilde{q}_3 \rfloor,$$
so the integer parts of the quotas stabilize.

Define $x_\tau = \max\{x_{\tau}^A,x_{\tau}^F\}$. Then for all $x \geq x_\tau$, 
$$A(1,x,y(x,\tau)) = \tilde{A}(\tilde{q}_2,\tilde{q}_3) \text{ and } 
\lfloor q_i(x) \rfloor = \lfloor \tilde{q}_i \rfloor, i = 2,3.$$
Therefore whether $A(1,x,y(x,\tau))$ violates lower quota is equivalent to whether the limiting apportionment does. 

Finally, $A(\tilde{q}_2,\tilde{q}_3) = (a_2,a_3)$ apportions $M-1$ seats, and $\tilde{A}$ assigns the remaining seat to state 1. Let state $i$ receive the final seat in the apportionment of $M$ seats in $A(\tilde q_2, \tilde q_3)$. Then, since for two states there are no quota violations by Theorem \ref{no_vio_n=2} there are only two possible cases:
\begin{enumerate}
\item If $a_i = \lfloor \tilde{q}_i \rfloor$, the final apportionment for state $i$ is $\lfloor \tilde{q}_i \rfloor -1$, which is a lower quota violation caused by nonzero allocations.
\item If $a_i = \lceil \tilde{q_i} \rceil$, the final apportionment for state $i$ is $\lfloor \tilde{q}_i \rfloor$, which is not a quota violation.
\end{enumerate}

Because the limiting quotas $(\tilde{q}_2, \tilde{q}_3)$ are functions of $\tau$, and the two-state allocation $A(\tilde{q}_2, \tilde{q}_3)$ depends only on these values, the presence or absence of a lower quota violation is uniquely determined by $\tau$ as claimed.
\end{proof}

This theorem then motivates the following definitions: 
\begin{definition}\label{Violatory_tau_def}
A value $\tau \notin \mathcal{E}$ is said to be \textit{ultimately-violatory} if for $x \geq x_\tau$, $A(1,x,y(\tau, x))$ produces a quota violation. Otherwise $\tau$ is said to be \textit{ultimately non-violatory} or \textit{ultimately quota compliant}.
\end{definition} 
Additionally, note the structure of $x_\tau$:
\begin{remark}\label{xtau_structure}
The $\tau$-interval $(-\frac{1}{3}, \frac{1}{3})$ can be decomposed into  finitely many intervals such that on each interval, $x_\tau$ is locally represented in the form $x_\tau = \frac{a}{b+c\tau} + \frac{d\tau}{b+c\tau} + e$ for $a,b,c,d,e \in \mathbb{R}$ and denominators not identically $0$. This is proven later in Theorem \ref{for_remark}.

\end{remark}

\subsection{Extension to $n$ States}
For three states, the statistic $\tau$ parametrizes one-dimensional families of reduced population vectors $(1,x,y)$ along which the relative proportions of the populations vary linearly. These families appear as straight rays in the $(x,y)$-plane and are useful because apportionment behavior stabilizes along such rays as $x \to \infty$.

This idea may be generalized to $n$ states, resulting in a similar theory which is presented briefly here. Note that proofs in this section are omitted due to similarity with the three state case.

\begin{definition} 
Given a reduced population vector $(1,x, y_1 \dots, y_{n-2})$, define $\bm{\tau}$ vector as:
\[ \bm{\tau} = (\tau_1, \dots, \tau_{n-2}) = \left( \frac{\frac{1+x+y_1}{3}-x }{y_1}, \dots, \frac{\frac{1+x+y_{n-2}}{3}-x }{y_{n-2}} \right).  \]
\end{definition}
\begin{lemma}
The vector $\bm{\tau} = (\tau_1, \dots, \tau_{n-2})$ satisfies $-\frac{1}{3}<\tau_1 < \dots < \tau_{n-2} < \frac{1}{3}$ under the ordering $(1 < x < y_1 < \ldots < y_{n-2})$.
\end{lemma}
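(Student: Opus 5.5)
The plan is to reduce each component to the three-state computation already carried out in Lemma \ref{t_thirds}, and then handle the ordering separately. For each $j \in \{1,\dots,n-2\}$, the component $\tau_j$ is exactly the three-state statistic $\tau(1,x,y_j)$ of Definition \ref{tau_def}. From the ordering we have $1 < x < y_1 < \dots < y_{n-2}$, so each triple $(1,x,y_j)$ satisfies $1<x<y_j$. Lemma \ref{t_thirds} therefore gives $|\tau_j| < \frac13$ immediately. In particular $\tau_1 > -\frac13$ and $\tau_{n-2} < \frac13$, which settles both outer inequalities.

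For the monotonicity, I would use the simplified form from the proof of Lemma \ref{t_thirds}:
$$\tau_j = \frac{1-2x+y_j}{3y_j} = \frac13 + \frac{1-2x}{3y_j}.$$
With $x$ fixed, set $c = 1-2x$. Since $x>1$, we have $c<0$. The map $y \mapsto \frac13 + \frac{c}{3y}$ is strictly increasing on $(0,\infty)$ when $c<0$, as its derivative is $-\frac{c}{3y^2} > 0$. Because $y_1 < y_2 < \dots < y_{n-2}$ strictly and all the $y_j$ share the same $x$, it follows that $\tau_1 < \tau_2 < \dots < \tau_{n-2}$.

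Combining the two parts yields the full chain $-\frac13 < \tau_1 < \dots < \tau_{n-2} < \frac13$. No step is a real obstacle. The one point that needs care is that the monotonicity depends on the common median-like entry $x$ and on the sign $1-2x<0$. If all the $\tau_j$ did not share the same $x$, the comparison would fail, so I would point out explicitly that they do.
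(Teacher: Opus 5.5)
Your proof is correct and takes the approach the paper intends. The paper omits this proof, citing similarity to the three-state case, which amounts to applying Lemma \ref{t_thirds} to each triple $(1,x,y_j)$ as you do; your argument from $\tau_j = \frac13 + \frac{1-2x}{3y_j}$, using the shared $x$ and $1-2x<0$, correctly supplies the strict ordering $\tau_1<\dots<\tau_{n-2}$, which the three-state lemma alone does not give.
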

\begin{lemma}
    Given $\bm{\tau} = (\tau_1, \dots, \tau_{n-2})$, $\bm{\tau}$ parameterize a family of population vectors 
    \[(1,x,y_1(x, \tau_1), \dots, y_{n-2}(x,\tau_{n-2})), \qquad \text{where } \qquad y_i(x,\tau_i) = \frac{2x-1}{1-3\tau_i}. \]
\end{lemma}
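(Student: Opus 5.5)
We propose to prove the lemma directly from the definition of $\bm{\tau}$, applying the argument of Lemma \ref{tau_family} to each coordinate separately. Fix $\bm{\tau} = (\tau_1,\dots,\tau_{n-2})$ with $-\frac13<\tau_1<\dots<\tau_{n-2}<\frac13$. For each $i$, the defining relation $\tau_i = \frac{\frac{1+x+y_i}{3}-x}{y_i}$ involves only $x$ and $y_i$; the other coordinates do not appear. Clearing denominators gives $3\tau_i y_i = 1-2x+y_i$, so $y_i(1-3\tau_i) = 2x-1$. Since $\tau_i<\frac13$, the factor $1-3\tau_i$ is nonzero, and we can solve uniquely: $y_i = \frac{2x-1}{1-3\tau_i}$. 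This is exactly the computation in Lemma \ref{tau_family}, applied to the three-state subvector $(1,x,y_i)$.

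The second step is to show this gives a genuine parametrization of valid reduced population vectors. Conversely, given $x>1$ and the $\tau_i$ in the stated range, define $y_i$ by the formula. We then check the ordering $1<x<y_1<\dots<y_{n-2}$, which is the reverse direction.

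\begin{itemize}
\item \emph{Monotonicity of the $y_i$.} Since $2x-1>0$ and $\frac{1}{1-3\tau}$ is strictly increasing in $\tau$ on $(-\frac13,\frac13)$, the ordering $\tau_1<\dots<\tau_{n-2}$ gives $y_1<\dots<y_{n-2}$.
\item \emph{The inequality $x<y_1$.} This is equivalent to $(1-3\tau_1)x<2x-1$, that is, $x(1+3\tau_1)>1$.
\end{itemize}

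The inequality $x<y_1$ is the main obstacle, because it does not hold for every $x>1$ when $\tau_1<0$. It holds exactly when $x>\frac{1}{1+3\tau_1}$. So the family should be stated as $x>\max\{1,\frac{1}{1+3\tau_1}\}$, mirroring the implicit restriction in the three-state Lemma \ref{tau_family}. We would note this domain caveat explicitly, or read the lemma as asserting only that every admissible vector with the given $\bm{\tau}$ has this form. Either way, both directions (vector $\mapsto$ $\bm{\tau}$ $\mapsto$ vector) recover the original data, which establishes the parametrization: for fixed $\bm{\tau}$ the family is a ray in $x$ along which each $y_i$ is linear in $x$.
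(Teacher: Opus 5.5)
Your proposal is correct and matches the paper's intended argument: the paper omits this proof as analogous to Lemma \ref{tau_family}, which is exactly the coordinatewise solve $y_i(1-3\tau_i)=2x-1$ that you carry out. Your converse ordering check is a valid refinement. When $\tau_1<0$, the condition $x<y_1$ does require $x>\frac{1}{1+3\tau_1}$, a domain restriction that the paper leaves implicit both here and in Lemma \ref{tau_family}.
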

\begin{theorem}\label{lim_1_nstates}
(Asymptotic Apportionment) Let $(1,x,y_1(x,\tau_1),\ldots,y_{n-2}(x,\tau_{n-2}))$ be a reduced population vector with $\bm\tau=(\tau_1,\ldots,\tau_{n-2})$. Let $A$ be a divisor method that guarantees nonzero allocations on $M$ seats. Then for almost every $\bm\tau$ satisfying $-\tfrac13 < \tau_1 < \cdots < \tau_{n-2} < \tfrac13,$

$$\lim_{x\to\infty}A(1,x,y_1(x,\tau_1),\dots,y_{n-2}(x,\tau_{n-2}))=\tilde A_1(\tilde q_2,\dots,\tilde q_n).$$

The limiting quotas are 
$$\tilde q_2=\lim_{x\to\infty}\frac{Mx}{P(x)},\qquad\tilde q_{i+2}
=\lim_{x\to\infty}\frac{M y_i(x,\tau_i)}{P(x)},$$
where $P(x) = 1+x+y_1(x,\tau_1)+\cdots+y_{n-2}(x,\tau_{n-2}).$
\end{theorem}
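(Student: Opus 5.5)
We mirror the proof of Theorem \ref{tau_limit_thm1}, replacing the single parameter $\tau$ by the vector $\bm\tau$. The first step is to compute the quotas explicitly. Since $y_i(x,\tau_i)=\frac{2x-1}{1-3\tau_i}$, the total population is
$$P(x)=1+x+(2x-1)S, \qquad S=\sum_{i=1}^{n-2}\frac{1}{1-3\tau_i}.$$
This is affine in $x$ with positive leading coefficient $1+2S$. The standard quotas $q_j=Mp_j/P(x)$ then satisfy, as $x\to\infty$,
$$q_1\to 0,\qquad q_2\to \tilde q_2=\frac{M}{1+2S},\qquad q_{i+2}\to\tilde q_{i+2}=\frac{2M}{(1-3\tau_i)(1+2S)}.$$
This identifies the limits stated in the theorem and shows that the reduced (scaled) population vector converges to the point $(0,\tilde q_2,\dots,\tilde q_n)$.

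Next, define the exceptional set $\mathcal E\subset\{-\frac13<\tau_1<\dots<\tau_{n-2}<\frac13\}$. It consists of those $\bm\tau$ for which some tie equation $\frac{\tilde q_j}{\delta(k)}=\frac{\tilde q_l}{\delta(m)}$ holds, with $2\le j<l\le n$ and $0\le k,m\le M-1$. Multiplying through by $1+2S$ turns each such equation into a nontrivial equation among the quantities $1$ and $\frac{1}{1-3\tau_i}$. For example, $\frac{\delta(m)}{1-3\tau_i}=\frac{\delta(k)}{1-3\tau_{i'}}$ is equivalent to $\delta(m)(1-3\tau_{i'})=\delta(k)(1-3\tau_i)$, which is linear in $\bm\tau$. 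Each such equation therefore cuts out an affine hyperplane (or the empty set) in $\mathbb R^{n-2}$. Only finitely many choices of $j,l,k,m$ occur, so $\mathcal E$ is a finite union of hyperplanes and has Lebesgue measure zero. This is the "almost every" of the statement.

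One degenerate case needs checking: when $\delta(k)=\delta(m)$ and the two states compared are distinct $y$-states, the equation forces $\tau_i=\tau_{i'}$, which is excluded by strict ordering. A tie between state $2$ and a $y$-state gives $1-3\tau_i=2\delta(m)/\delta(k)$, which is again a hyperplane. Ties with state $1$ do not belong in $\mathcal E$; they are handled in the final step.

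For $\bm\tau\notin\mathcal E$, the priority orderings among states $2,\dots,n$ for all seat counts up to $M-1$ are strict at the limit point. By continuity of the ratios $p_j/p_l=q_j/q_l$ in $x$, these orderings persist for all sufficiently large $x$. Hence the relative order in which states $2,\dots,n$ receive seats in the sequential procedure is eventually fixed.

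It remains to control state 1. This is the step I expect to be the main obstacle, since $q_1\to0$ makes the limit point lie on the boundary of the domain, and the finitely-many-regions argument does not apply directly there. State 1 receives its guaranteed seat (as $\delta(0)=0$). Its next priority is $p_1/\delta(1)=1/\delta(1)$, which is fixed, while every other state's priority at any seat count $r\le M-1$ is at least $x/\delta(M-1)$, and this tends to infinity. So for $x$ large enough, state 1's priority after one seat is smaller than every competing priority at every stage, and state 1 never receives a second seat. The remaining $M-1$ seats are then distributed among states $2,\dots,n$ exactly as $A$ would distribute them on $(q_2,\dots,q_n)$. By scale invariance (Lemma \ref{neutrality}) and the stabilized orderings, this is $A(\tilde q_2,\dots,\tilde q_n)$ with $M-1$ seats. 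That is precisely $\tilde A_1(\tilde q_2,\dots,\tilde q_n)$. Since the apportionment takes integer values and is eventually constant, the limit exists and equals this value.
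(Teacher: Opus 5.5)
Your proposal is correct and follows essentially the same route as the paper. The paper omits this proof and defers to the three-state argument of Theorem \ref{tau_limit_thm1}, which has the same three steps: compute the limiting quotas, take as exceptional set the finitely many tie equations (each linear in $\bm\tau$, so a measure-zero hyperplane), and note that state 1 never receives a second seat. Your version is tighter than that sketch in two places. You restrict the ties to states $2,\dots,n$, and you replace the paper's assertion that state 1's priorities ``become negligible'' (and that $A$ is constant near a limit point with $q_1=0$) with the explicit comparison $1/\delta(1) < x/\delta(M-1)$. The only nit is that the seat indices in your tie equations should run from $1$, not $0$: with $\delta(0)=0$ the first seats are assigned without any comparison, and $k=m=0$ would give the vacuous equation $0=0$ rather than a hyperplane.
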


\begin{theorem}\label{quota_stabilization_n}
(Asymptotic Quota Stabilization) Let $\bm\tau$ be a nonexceptional parameter vector for which Theorem~\ref{lim_1_nstates} holds. Then there exists $x_{\bm{\tau}}$ such that for all $x\ge x_{\bm\tau}$

$$
A(1,x,y_1(x,\tau_1),\dots,y_{n-2}(x,\tau_{n-2}))
= \tilde A_1(\tilde q_2,\dots,\tilde q_n).
$$

Consequently the quota behavior stabilizes: for sufficiently large $x$, the apportionment is always either quota compliant, a quota violation caused by nonzero allocation, a quota violation worsened by nonzero allocation, or an upper quota violation. Moreover, this behavior depends only on $\bm{\tau}$. If $A(\tilde q_2,\dots,\tilde q_n)=(\tilde a_2,\dots,\tilde a_n)$ is the apportionment of $M$ seats among states $2,\dots,n$ and state $i$ receive the last seat then:
\begin{enumerate}
\item If $A(\tilde q_2,\dots,\tilde q_n)$ is not a quota violation and $\tilde a_i=\lfloor\tilde q_i\rfloor$, then the asymptotic apportionment is a quota violation caused by nonzero allocation.
\item If $A(\tilde q_2,\dots,\tilde q_n)$ is not a quota violation and $\tilde a_i=\lceil\tilde q_i\rceil$, then the asymptotic apportionment is not a quota violation.
\item If $A(\tilde q_2,\dots,\tilde q_n)$ is an upper quota violation where state $i$ is the only state assigned more than its upper quota and $\tilde{a}_i = \lceil\tilde q_i\rceil +1$, then the asymptotic apportionment is not a quota violation.
\item If $A(\tilde q_2,\dots,\tilde q_n)$ is an upper quota violation and case (3) does not apply, then the asymptotic apportionment is an upper quota violation.
\item If $A(\tilde q_2,\dots,\tilde q_n)$ is a lower quota violation, then the asymptotic apportionment is a quota violation worsened by nonzero allocation.
\end{enumerate}
\end{theorem}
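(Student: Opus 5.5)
The plan is to mirror the proofs of Theorems \ref{tau_limit_thm1} and \ref{X_tau_def/exist}. The argument has two stages: first a stabilization stage, then a case analysis on the limiting $(n-1)$-state apportionment.

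\textbf{Stabilization.} Fix a nonexceptional $\bm\tau$ for which Theorem~\ref{lim_1_nstates} holds.
\begin{enumerate}
\item Since $A$ takes values in $\mathbb{Z}^n$, convergence of $A(1,x,y_1,\dots,y_{n-2})$ to $\tilde A_1(\tilde q_2,\dots,\tilde q_n)$ forces equality for all $x\ge x^A_{\bm\tau}$.
\item The standard quotas converge: $q_1(x)\to 0$ and $q_j(x)\to\tilde q_j$ for $j\ge 2$. The exceptional set is defined by equalities of priority values, and we enlarge it, still a null set, by the hyperplanes where some $\tilde q_j\in\mathbb{Z}$. Then each $\tilde q_j$ is nonintegral, so there is $x^F_{\bm\tau}$ beyond which $\lfloor q_j(x)\rfloor=\lfloor\tilde q_j\rfloor$ and $\lceil q_j(x)\rceil=\lceil\tilde q_j\rceil$ for $j\ge2$.
\item For large $x$ we also have $0<q_1(x)<1$, so state 1's quota bounds are $0$ and $1$, and its single seat never violates quota.
\item Set $x_{\bm\tau}=\max\{x^A_{\bm\tau},x^F_{\bm\tau}\}$. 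Beyond $x_{\bm\tau}$, the quota status of the full apportionment equals the quota status of $\tilde A_1(\tilde q_2,\dots,\tilde q_n)$ measured against the floors and ceilings of the $\tilde q_j$.
\end{enumerate}
This status depends only on $\bm\tau$, which gives the "Consequently" clause once the five cases are checked.

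\textbf{Case analysis.} By definition, $\tilde A_1$ equals $A(\tilde q_2,\dots,\tilde q_n)$ on $M$ seats with the last seat, given to state $i$, removed. Adding state 1 with a tiny population leaves the quotas of states $2,\dots,n$ in the limit equal to $\tilde q_j$, since they still sum to $M$. So the only change relative to $A(\tilde q_2,\dots,\tilde q_n)$ is that $\tilde a_i$ drops by one.
\begin{enumerate}
\item[(1)] If $A(\tilde q_2,\dots,\tilde q_n)$ is compliant and $\tilde a_i=\lfloor\tilde q_i\rfloor$, state $i$ ends with $\lfloor\tilde q_i\rfloor-1$, a lower violation. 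Conditions 2 and 3 of Definition~\ref{vio_as_not0allo_def} hold: $\tilde A=A$ by construction, and the reduced problem is compliant. So the violation is caused by nonzero allocation.
\item[(2)] If it is compliant and $\tilde a_i=\lceil\tilde q_i\rceil$, the decrement yields $\lfloor\tilde q_i\rfloor$, which is compliant.
\item[(3)] If state $i$ is the unique over-quota state with $\tilde a_i=\lceil\tilde q_i\rceil+1$, the decrement restores $\lceil\tilde q_i\rceil$, and no other state is affected. One point needs care here: the last seat goes to a state that is over quota. I would note that if state $i$ is over quota, then its last-received seat is the one removed, and case (3) just asserts which state received it.
\item[(4)] If some other state remains above its upper quota, or state $i$ exceeds it by at least two, an upper violation persists.
\item[(5)] If $A(\tilde q_2,\dots,\tilde q_n)$ has a lower violation, then by Theorem~\ref{no_up_and_down} it has no upper violation. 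Removing a seat cannot repair a lower violation, since it never increases any $a_j$. So the violation persists, and conditions 1 and 2 hold: it is worsened by nonzero allocation.
\end{enumerate}

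The main obstacle is case (5), together with the delicacy in (3)/(4) of identifying precisely which state loses the seat. Here I would rely on house monotonicity of divisor methods: $A$ on $M-1$ seats is obtained from $A$ on $M$ seats by removing exactly the last-assigned seat, and this justifies the "decrement $\tilde a_i$ by one" description used throughout. The measure-zero bookkeeping for the exceptional set, including integrality of the $\tilde q_j$, is routine. Each such condition is a proper affine or rational hypersurface in $\bm\tau$-space.
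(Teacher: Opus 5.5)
Your proposal is correct and follows the route the paper intends. The paper omits this proof as a direct analogue of Theorem~\ref{X_tau_def/exist}: integer-valued convergence gives $x^A_{\bm\tau}$, floor stabilization gives $x^F_{\bm\tau}$, and the case analysis comes from removing the last-assigned seat of $A(\tilde q_2,\dots,\tilde q_n)$. Your explicit removal of the null set where some $\tilde q_j\in\mathbb{Z}$ is more careful than the paper's three-state proof, which infers nonintegrality from $\tau\notin\mathcal{E}$ alone.

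One step should be made explicit. In case~(1), condition~3 of Definition~\ref{vio_as_not0allo_def} concerns $A(p_2,\dots,p_n)$ on $M$ seats at finite $x$, not the limiting problem. So you need one more threshold beyond which this $(n-1)$-state apportionment and its quota floors agree with those of $A(\tilde q_2,\dots,\tilde q_n)$. This follows from the same local-constancy argument, provided ties for the $M$-th seat among states $2,\dots,n$ are also excluded.
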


\section{Probability of Quota Violations Caused by Nonzero Allocation in Three States }
In this section, $\tau$ is used to derive a formula for probability of a quota violation caused by nonzero allocations for $n=3$ states. We begin by studying the case in which reduced population vectors are drawn uniformly from $\{(1,x,y) \mid 1<x<y \}$, before expanding the analysis to more general distributions.

Throughout, $A$ denotes either the Huntington-Hill, Adams, Dean, modified Webster or modified Jefferson's method. The argument presented here can be extended to additional divisor methods with minor modifications. 

\subsection{The Asymptotic Uniform Distribution}
The following result characterizes $\tau$ when a reduced population vector $(1,x,y)$ is drawn from the region $\{(1,x,y) \mid 1<x<y \}$ under an asymptotic uniform distribution. Heuristically the drawing is from $\{(1,x,y) \mid 1<x<y<h \}$ where $h$ is large. Formally, the definition of the probability for a set $E$ is
$$ P(E) =
\lim_{h\to\infty} P_h(E)
$$
where $P_h(E)$ is the probability of $E$ under a uniform distribution on $\{(1,x,y) \mid 1<x<y<h\}$. 
\begin{lemma}\label{prob_given_tau_uniform} Under the asymptotic uniform distribution defined above on reduced population vectors $\{(1,x,y) \mid 1<x<y \}$, the parameter $\tau$ is a uniform random variable on $\left( -\frac{1}{3}, \frac{1}{3}\right)$. \end{lemma}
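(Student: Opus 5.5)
The plan is to compute the distribution function of $\tau$ under $P_h$ directly and then let $h\to\infty$. Fix $t\in\left(-\frac13,\frac13\right)$ and compute $P_h(\tau\le t)$ as a ratio of areas in the $(x,y)$-plane. The denominator is the area of the triangle $T_h=\{1<x<y<h\}$, which is $\frac{(h-1)^2}{2}$.

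For the numerator, use the simplified form $\tau=\frac13+\frac{1-2x}{3y}$ from the proof of Lemma \ref{t_thirds}. Since $y>0$, the condition $\tau\le t$ is equivalent to
$$2x-1\ \ge\ (1-3t)\,y, \qquad\text{i.e.}\qquad x\ \ge\ \ell_t(y):=\frac{(1-3t)y+1}{2}.$$
This is exactly the region on one side of the $\tau$-line from Lemma \ref{tau_family}. So for each fixed $y\in(1,h)$, the horizontal slice of $\{\tau\le t\}\cap T_h$ is the interval $x\in(\max(1,\ell_t(y)),\,y)$, whenever that interval is nonempty.

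Two boundary effects can make this slice deviate from the generic one, and I will show both are confined to bounded $y$.
\begin{enumerate}
\item The constraint $x>1$ is active only when $\ell_t(y)<1$, i.e. $y<\frac{1}{1-3t}$.
\item The slice is empty only when $\ell_t(y)\ge y$, i.e. $y\le\frac{1}{1+3t}$.
\end{enumerate}
Both thresholds are finite constants $c_t$ depending only on $t$, because $|t|<\frac13$. For $y>c_t$ the slice length is exactly $y-\ell_t(y)=\frac{(1+3t)y-1}{2}$. Integrating,
$$\operatorname{Area}\bigl(\{\tau\le t\}\cap T_h\bigr)=\int_{c_t}^{h}\frac{(1+3t)y-1}{2}\,dy+O_t(1)=\frac{(1+3t)h^2}{4}+O_t(h),$$
where the $O_t(1)$ term comes from the bounded strip $1<y<c_t$, whose slices have length at most $c_t$.

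Dividing by $\frac{(h-1)^2}{2}$ and letting $h\to\infty$ gives
$$P(\tau\le t)=\lim_{h\to\infty}P_h(\tau\le t)=\frac{1+3t}{2}=\frac{t-(-\frac13)}{\frac23}.$$
This is the distribution function of the uniform distribution on $\left(-\frac13,\frac13\right)$. Since the limit holds for every $t$ in the interval, and $\tau\in\left(-\frac13,\frac13\right)$ almost surely by Lemma \ref{t_thirds}, $\tau$ is uniform under the asymptotic distribution.

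The main point requiring care is justifying that the boundary corrections are $O(h)$ rather than $O(h^2)$, so that they vanish after normalization. This is exactly what the case analysis above provides, since both the $x>1$ cutoff and the empty-slice region live in the bounded strip $y\le c_t$.
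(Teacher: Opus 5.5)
Your proof is correct and is essentially the paper's argument: the paper computes the complementary tail $P(\tau>k)$ as a ratio of horizontal-slice integrals in $y$ over $\{1<x<y<h\}$ and takes the ratio of the leading $h^2$ coefficients, exactly as you do for $P(\tau\le t)$. Your explicit $O_t(h)$ bound on the two boundary effects (the $x>1$ cutoff for $y<\frac{1}{1-3t}$ and the empty slices for $y\le\frac{1}{1+3t}$) makes precise the step the paper only asserts as ``asymptotically negligible.''
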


\begin{proof} It suffices to show that for every $k \in \mathbb{R}$, the tail probability $P(\tau > k)$ coincides with that of a Uniform$\left(-\frac13,\frac13 \right)$ random variable. The only unknown probability is when $\tau \in (-\frac13 , \frac13)$ by Lemma \ref{t_thirds}: 
$$P(\tau > k) = \left\{ \begin{array}{lcr}
1 & k \leq -\frac{1}{3} \\ ? & k \in \left( -\frac{1}{3}, \frac{1}{3} \right) \\ 0 & k \geq \frac{1}{3} \end{array} \right.$$
To find the unknown expression, let $k \in \left( -\frac{1}{3}, \frac{1}{3} \right)$. Then
$$ P\left(\tau>k\right) = P\left(\frac{\frac{1+x+y}{3}-x}{y}>k\right) = P\left(x<\left(\frac{1}{2}-\frac{3}{2}k\right)y + \frac12 \right). $$
This can then be interpreted geometrically as the fraction of the region $1<x<y$ covered by $x<\left(\frac{1}{2}-\frac{3}{2}k\right)y + \frac12$. For small $y$, the region $x<\left(\frac{1}{2}-\frac{3}{2}k\right)y + \frac12$ may lie below $x = 1$, so the lower bound in $y$ is found by solving $\left(\frac{1}{2}-\frac{3}{2}k\right)y + \frac12 = 1$, which gives $y = \frac{1}{1-3k}.$ For $k \in \left(-\frac13,\frac13\right)$, the line 
$x=\left(\frac12-\frac32k\right)y+\frac12$ has slope strictly less than $1$, so for sufficiently large $y$ it lies below the line $x=y$. For sufficiently large $y$, the line determines the upper boundary in $x$ and the contribution from the region where $x=y$ bounds the domain is asymptotically negligible. Accordingly, the $y$–integration extends to $h$.

Thus, the desired probability equals 
\begin{align*}
P(\tau > k) 
 = P\left(x<\left(\frac{1}{2}-\frac{3}{2}k\right)y+ \frac12\right) 
 = \lim_{h \to \infty} 
 \frac{ 
 \int_{\frac{1}{1-3k}}^h 
 \left(\frac{1}{2}-\frac{3}{2}k\right)y - \frac{1}{2} \,dy }{\int_1^h (y -1) \,dy}
 =\frac{1}{2} - \frac{3}{2}k
\end{align*}

The last equality follows since both the numerator and the denominator are quadratic polynomials in $h$, so the limit is determined by the leading coefficients. For $k \in \left(-\frac13,\frac13\right)$, this coincides with the tail probability of a Uniform$\left(-\frac13,\frac13\right)$ random variable.
\end{proof}

The uniform distribution of $\tau$ obtained above is not merely an result of the specific area normalization on the wedge $\{1<x<y\}$. In fact, the above calculation reveals that the distributional behavior of $\tau$ depends primarily on the asymptotic distribution of the ratio $x/y$. The following proposition makes this precise.

\begin{prop}[Robustness of the Uniform Distribution of $\tau$]
\label{tau_ratio_uniform}
Suppose reduced population vectors are sampled for which $u=\frac{x}{y}$ is asymptotically uniformly distributed on $(0,1)$ and $y \to \infty$ in density.
Then the induced distribution $\tau = \frac{1-2x+y}{3y}$ converges to the uniform distribution on $\left(-\frac13,\frac13\right)$.
\end{prop}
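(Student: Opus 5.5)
We write $\tau$ as a function of $u$ and $1/y$ and then let the $1/y$ term vanish. Since $\tau = \frac{1-2x+y}{3y}$ and $x = uy$,
$$\tau = \frac{1}{3} - \frac{2}{3}u + \frac{1}{3y} =: g(u) + \varepsilon(y), \qquad g(u) = \frac{1-2u}{3}, \quad \varepsilon(y) = \frac{1}{3y}.$$
The map $g$ is an affine bijection from $(0,1)$ onto $\left(-\frac13,\frac13\right)$ with constant Jacobian $|g'| = \frac23$. So if $U \sim \mathrm{Uniform}(0,1)$, then $g(U) \sim \mathrm{Uniform}\left(-\frac13,\frac13\right)$. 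This also fixes the orientation: $P(g(U) > k) = P\left(U < \frac12 - \frac32 k\right) = \frac12 - \frac32 k$, which matches the tail formula in Lemma \ref{prob_given_tau_uniform}.

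Next we make the hypotheses precise in the same asymptotic-limit framework used for Lemma \ref{prob_given_tau_uniform}, taking a family of distributions $P_h$ as $h \to \infty$. We interpret "$u$ asymptotically uniform" as convergence in distribution $u \Rightarrow \mathrm{Uniform}(0,1)$ under $P_h$. We interpret "$y \to \infty$ in density" as $P_h(y < L) \to 0$ for every fixed $L$. The second condition gives $\varepsilon(y) = \frac{1}{3y} \to 0$ in probability: for any $\eta > 0$, $P_h(\varepsilon(y) > \eta) = P_h(y < 1/(3\eta)) \to 0$.

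The limit then follows in two steps.
\begin{itemize}
\item By the continuous mapping theorem, $g(u) \Rightarrow g(U) \sim \mathrm{Uniform}\left(-\frac13,\frac13\right)$.
\item By Slutsky's theorem, $\tau = g(u) + \varepsilon(y) \Rightarrow \mathrm{Uniform}\left(-\frac13,\frac13\right)$.
\end{itemize}

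If a self-contained argument is preferred over citing Slutsky, we can bound the tails directly. For $k \in \left(-\frac13,\frac13\right)$ and any $\eta > 0$,
$$P(g(u) > k) \le P(\tau > k) \le P(g(u) > k - \eta) + P(\varepsilon(y) > \eta).$$
Taking $h \to \infty$ and using that the uniform CDF is continuous, the lower bound tends to $\frac12 - \frac32 k$. The upper bound tends to $\frac12 - \frac32(k - \eta)$. Letting $\eta \to 0$ gives $P(\tau > k) \to \frac12 - \frac32 k$ for every $k$, which is the Uniform$\left(-\frac13,\frac13\right)$ tail.

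The main obstacle is not computational. It is stating the hypotheses correctly: the statement is informal, and $u$ and $y$ are not assumed independent. The proof has to rely only on $\varepsilon(y) \to 0$ in probability, which is all Slutsky needs and requires no independence assumption. Finally, the area-normalized wedge of Lemma \ref{prob_given_tau_uniform} satisfies both hypotheses: there $u = x/y$ is asymptotically uniform, and the mass with $y < L$ is $O(L^2/h^2) \to 0$. So that lemma is a special case of the proposition.
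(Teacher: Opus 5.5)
Your proof is correct and follows the paper's route. Both write $x=uy$ to get $\tau=\frac{1-2u}{3}+\frac{1}{3y}$, let the $\frac{1}{3y}$ term vanish in probability, and observe that $\frac{1-2u}{3}$ is uniform on $\left(-\frac13,\frac13\right)$; your Slutsky and tail-sandwich argument just makes rigorous the step the paper states informally, and it correctly uses no independence between $u$ and $y$.
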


\begin{proof}
Writing $x=uy$, we have $\tau = \frac{1-2uy+y}{3y}= \frac{1}{3y} + \frac{1-2u}{3}.$
As $y \to \infty$, the term $\frac{1}{3y}$ vanishes in probability. 
Thus the limiting distribution of $\tau$ coincides with that of 
$\frac{1-2u}{3}.$
Since $u$ is asymptotically uniform on $(0,1)$, it follows that $\frac{1-2u}{3}\sim \mathrm{Unif}\!\left(-\frac13,\frac13\right)$.
\end{proof}

Having established that $\tau$ is uniformly distributed on $\left(-\tfrac13,\tfrac13\right)$, we now relate the behavior of quota violations to the distribution of $\tau$. The following lemma shows that the problem reduces to identifying which values of $\tau$ are ultimately violatory.

\begin{theorem}[Quota Violations Under the Asymptotic Uniform Distribution]
\label{zero_one_quota}
Under the asymptotic uniform distribution on reduced population vectors $(1,x,y)$ with $1<x<y$ the following are equivalent:
\begin{itemize} 
\item $P(A(1,x,y) \text{ is a quota violation}). $
\item $P(A(1,x,y) \text{ is a quota violation caused by nonzero allocations)}.$ 
\item $P(\tau(x,y) \text{ is ultimately violatory}).$
\end{itemize}
\end{theorem}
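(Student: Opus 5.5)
The three probabilities will be shown equal by working in $(\tau,x)$ coordinates. By Lemma \ref{tau_family}, the map $(x,\tau)\mapsto (x,y(x,\tau))$ is a change of variables on the wedge $\{1<x<y\}$. By Lemma \ref{prob_given_tau_uniform}, $\tau$ is uniform on $(-\frac13,\frac13)$ under the asymptotic uniform distribution. The key point is that under this distribution, for any fixed $\tau$, the conditional mass along the $\tau$-ray concentrates at large $x$. Indeed, $P_h$ restricted to a $\tau$-ray truncated at $x\le X$ has mass $O(X/h)$ per unit of $\tau$: the Jacobian $\frac{3}{2}y$ from the parametrization in Theorem \ref{kitchen_sink} gives mass $\int^{X} y\,dy$ against a total of order $h^2$. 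So any event that holds for all $x\ge x_\tau$ (equivalently, for $y$ beyond a threshold) on the ray for $\tau$ has limiting probability equal to the measure of the set of such $\tau$.

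\textbf{Step 1 (discard bad $\tau$).} The exceptional set $\mathcal{E}$ from Theorem \ref{tau_limit_thm1} is finite, so it has $\tau$-measure zero and can be ignored.

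\textbf{Step 2 (stabilization).} For $\tau\notin\mathcal{E}$, Theorem \ref{X_tau_def/exist} gives a threshold $x_\tau<\infty$. Beyond it, $A(1,x,y(x,\tau))$ is either always a lower quota violation caused by nonzero allocations, or always not a quota violation. The first case is exactly when $\tau$ is ultimately violatory (Definition \ref{Violatory_tau_def}).

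\textbf{Step 3 (equate probabilities).} Write $P_h(\text{q.v.})$ as an integral over $\tau$ of the fraction of the ray segment inside $\{y<h\}$ on which a violation occurs. This fraction differs from $\mathbf 1[\tau \text{ ult. violatory}]$ by at most the normalized mass of the segment with $x<x_\tau$. That mass is $O(x_\tau^2/h^2)$ relative to the full segment, and it tends to $0$ pointwise in $\tau$. Dominated convergence, with the integrand bounded by $1$, then gives $\lim_h P_h(\text{q.v.})=P(\tau\text{ ult. violatory})$. The identical argument applies to \emph{q.v.\ caused by nonzero allocations}. On rays with $\tau$ ultimately violatory, the violation for $x\ge x_\tau$ is of that type by Theorem \ref{X_tau_def/exist}(1). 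On the remaining rays, such violations can occur only when $x<x_\tau$. Hence all three quantities coincide.

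\textbf{Main obstacle.} The delicate point is justifying the interchange of limit and integral, together with the claim that the near-boundary region (near $x=y$, i.e.\ $\tau$ near $-\frac13$, and near $x=1$) is negligible. I would handle this by fixing $\epsilon>0$, restricting to $\tau\in[-\frac13+\epsilon,\frac13-\epsilon]$ minus small neighborhoods of $\mathcal{E}$, and bounding the complement's probability by $O(\epsilon)$. Pointwise convergence on the remainder then suffices via bounded convergence. No uniform bound on $x_\tau$ is needed, since each ray's contribution converges individually.
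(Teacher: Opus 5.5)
Your argument is correct, but it is organized differently from the paper's. The paper splits each probability into its restriction to $\{x\ge x_\tau\}$, where the three events coincide by Theorem \ref{X_tau_def/exist}, plus the remainder $P_h(x<x_\tau)$. It kills that remainder with the union bound $P_h(x<x_\tau)\le P_h(x_\tau>r)+P_h(x<r)$. It chooses a single truncation level $r_\varepsilon$ for which $\{\tau : x_\tau>r_\varepsilon\}$ has length below $\frac23\cdot\frac{\varepsilon}{2}$, justified by the local rational form of $x_\tau$ (Remark \ref{xtau_structure}, proved only later as Theorem \ref{for_remark}). It then uses Lemma \ref{prob_given_tau_uniform} and an explicit area computation showing $P_h(x<r_\varepsilon)\to 0$.

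You instead disintegrate $P_h$ along $\tau$-rays with the Jacobian $\frac32 y$ and pass to the limit ray by ray via dominated convergence. So all you need is $x_\tau<\infty$ for each $\tau\notin\mathcal E$; no uniform control or formula for $x_\tau$ enters. This frees you from the forward reference. Theorem \ref{for_remark} sits in the subsection framed for Huntington--Hill, Dean, modified Webster and modified Jefferson, whereas your argument rests only on Theorem \ref{X_tau_def/exist}. It therefore applies verbatim to Adams, for which the paper also needs this result (through Theorem \ref{prob_sum}). The paper's route, in turn, never writes down conditional densities along rays and reuses the already-established uniform limit of $\tau$. Its choice of $r_\varepsilon$ also really only needs $x_\tau<\infty$ off a null set plus continuity of measure, so it too could drop the structural remark.

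One detail to state explicitly: the integrand in your dominated-convergence step is $g_h(\tau)\,w_h(\tau)$, not $g_h$ alone. Here $g_h$ is your ray fraction, and $w_h(\tau)=\frac32\,(h^2-y_{\min}(\tau)^2)_+/(h-1)^2$ with $y_{\min}(\tau)=\max\{\frac{1}{1-3\tau},\frac{1}{1+3\tau}\}$ is the $\tau$-marginal density of $P_h$. It is bounded by $6$ for $h\ge 2$ and tends to $\frac32$. This supplies the dominating function and identifies the limit as $\frac32$ times the length of the ultimately violatory set. With that in place, the $\epsilon$-restriction in your last paragraph is unnecessary.
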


\begin{proof}
It follows from  Theorem \ref{X_tau_def/exist} and the definition of ultimately violatory that the following are equivalent:
\begin{itemize} 
\item $P(A(1,x,y) \text{ is a quota violation for $x>x_\tau$}) $
\item $P(A(1,x,y) \text{ is a quota violation caused by nonzero allocations for $x>x_\tau$)}$ 
\item $P(\tau(x,y) \text{ is ultimately violatory for $x>x_\tau$}).$
\end{itemize}
As such, it suffices to show $P(x<x_\tau) = 0$, as then the desired statement may be deduced by adding $0$. \\
\\
To start, let $P_h(E)$ be the probability of $E$ under the uniform distribution on $\{(1,x,y) \mid 1<x<y<h \}$ and let $r \in (1, h)$. Then:
\[ 0 \leq P_h(x<x_\tau) \leq P_h(\tau \mid x_\tau > r) + P_h( x< r ). \]
It follows from Remark \ref{xtau_structure} that for each $\varepsilon > 0$, there exists $r_\varepsilon$ such that the total length of the set of $\tau$ such that $x_\tau > r_\varepsilon$ is less than $\left(\frac{2}{3} \right) \cdot\left( \frac{\varepsilon}{2} \right)$. Then, by Lemma \ref{prob_given_tau_uniform}, $P_h(x_\tau > r_\varepsilon)$ converges to $\frac{3}{2}$ times the total length of such $\tau$ and therefore for all $h$ sufficiently large:
\[ P_h(\tau \mid x_\tau > r_\varepsilon ) < \frac{\varepsilon}{2}. \]
Additionally compute using the area of triangles and squares:
\[ P_h(x<r_\varepsilon) = \frac{2(h-1)(r_\varepsilon -1) - (r_\varepsilon -1)^2}{(h-1)^2} \]
which goes to $0$ as $h \to \infty$. Thus, for all $h$ sufficiently large:
\[ P_h( x< r_\varepsilon ) < \frac{\varepsilon}{2}. \]
Then for all $h$ sufficiently large:
\[ 0 \leq P_h(x<x_\tau) \leq P_h(\tau \mid x_\tau > r_\varepsilon) + P_h( x< r_\varepsilon ) < \varepsilon. \]
So
\[ P(x<x_\tau) = \lim_{h \to \infty} P_h(x<x_\tau) = 0, \]
as needed.

\end{proof}

\subsection{An Asymptotic Probability Function for Three States}
Fix $M$ seats and $n=3$ states. By Theorem \ref{zero_one_quota}, to derive a probability function for quota violations under uniformly distributed population vectors, it suffices to determine precisely which values of $\tau$ cause families to be violatory for all $x \geq x_\tau$. Once the subset of admissible $\tau$ is determined, the desired probability is then the length of this subset divided by the length of all possible $\tau$, which is $\frac{2}{3}$ by Lemma \ref{t_thirds}. 

As shown in Section 4, the three-state problem reduces to a two-state problem. In particular, there is a bijection between $\tau$ and the standard quota $\tilde{q}_3.$ For each $\tau$, the reduced population vector $(1,x,y(x,\tau))$ has the same eventual apportionment behavior (for $x \geq x_\tau$) to that of $(\tilde{q}_2,\tilde{q}_3)$ by Theorem \ref{X_tau_def/exist}. 

The relationship between $\tau$ and $\tilde{q}_3$ are given by Theorem \ref{tau_limit_thm1}: 
$$ \tilde{q}_3 = \frac{2M}{3-3\tau}, \qquad \tilde{q}_2 + \tilde{q}_3 = M$$
Consequently,
$$
\tau = 1 - \frac{2M}{3\tilde{q}_3}.
$$

Thus specifying $\tilde{q}_3$ determines $\tau$ and $\tilde{q}_2$, and vice versa. By construction, $\tilde{q}_3>\tilde{q}_2$.

By Theorem \ref{X_tau_def/exist}, a value of $\tau$ produces a lower quota violation for the reduced population vector if and only if the corresponding two state apportionment $\tilde{A}(\tilde{q}_2, \tilde{q}_3)=\tilde{A}(M-\tilde{q}_3, \tilde{q}_3)$ produces a lower quota violation. We now determine precisely for which values of $\tilde{q}_3$ a lower violation occurs. 

Write $\tilde{q}_3 = \lfloor \tilde{q}_3 \rfloor + d$ for $d \in [0,1)$. The occurrence of a lower quota violation is determined by the apportionment of the final seat as $d \to 1^-$. Specifically, by comparing the priority values that determine if the last seat is assigned to the state at $\lfloor \tilde{q}_2\rfloor$ or $\lceil \tilde{q}_2\rceil$, then we can determine exactly when a lower quota violation occurs in $A$. When $A$ assigns the final seat to the first state, if the seat is removed from either the floor of $\tilde{q}_2$ or $\tilde{q}_3$ then a lower quota violation will occur. If the last seat is removed for the first state from either the ceiling of $\tilde{q}_2$ or $\tilde{q}_2$, then no lower quota violation will occur. 

\begin{lemma}\label{d.initial=integers} Using the notation from above, when $\tilde{q}_3 = \lfloor \tilde{q}_3\rfloor$ (or equivalently when $d=0$), there is lower quota violation in $\tilde{A}(M-\tilde{q}_3, \tilde{q}_3)$. \end{lemma}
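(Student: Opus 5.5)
The plan is to reduce the statement to the two-state case and apply Theorem \ref{no_vio_n=2}. Set $k := \tilde{q}_3 = \lfloor \tilde{q}_3 \rfloor$, so that $d=0$. Since $\tilde{q}_2 + \tilde{q}_3 = M$, we also have $\tilde{q}_2 = M-k \in \mathbb{Z}$. Both limiting quotas are therefore integers, and for each state the lower and upper quotas coincide with the quota itself: $\lfloor \tilde{q}_i \rfloor = \lceil \tilde{q}_i \rceil = \tilde{q}_i$ for $i=2,3$.

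First I would apportion all $M$ seats between states $2$ and $3$ with quotas $(M-k,k)$. By Theorem \ref{no_vio_n=2}, this two-state apportionment has no quota violation. Because each state's lower and upper quotas are equal, each state must receive exactly its quota. So $A(\tilde{q}_2,\tilde{q}_3) = (M-k,\,k)$ on $M$ seats.

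Next I would pass to $\tilde{A}(M-\tilde{q}_3,\tilde{q}_3)$, which gives state $1$ its guaranteed seat and apportions only $M-1$ seats between states $2$ and $3$ (Definition \ref{stealing_def}). A divisor method assigns seats one at a time in the priority order of steps 2--4, so the $(M-1)$-seat apportionment is the $M$-seat one with the last assigned seat removed. If state $i\in\{2,3\}$ received that last seat, then under $\tilde{A}$ state $i$ gets $\tilde{q}_i - 1 = \lfloor \tilde{q}_i \rfloor - 1 < \lfloor \tilde{q}_i \rfloor$. This is a lower quota violation, and it falls under case 1 of Theorem \ref{X_tau_def/exist}, since $A$ gives state $i$ exactly $\lfloor \tilde{q}_i\rfloor$ in the $M$-seat two-state apportionment. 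State $1$, with limiting quota $0$, receives $1 \le \lceil 0^+\rceil$ and causes no upper violation, which is consistent with Theorem \ref{no_up_and_down}.

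The main subtlety is whether the $d=0$ endpoint is even covered by the asymptotic framework, because integrality of $\tilde{q}_3$ is exactly the kind of boundary that can place $\tau$ in $\mathcal{E}$. I would therefore state the lemma purely as a fact about the two-state allocation $\tilde{A}(M-\tilde{q}_3,\tilde{q}_3)$. It then serves as the starting point of the analysis as $d$ increases from $0$ toward $1$. A possible tie for the last seat at integer quotas does not matter: whichever state receives it, the conclusion is the same.
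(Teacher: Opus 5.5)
Your argument is correct and is essentially the paper's proof. Integrality of $\tilde q_3$ forces $\tilde q_2 = M-\tilde q_3$ to be an integer, so the $M$-seat two-state allocation is exactly $(\tilde q_2,\tilde q_3)$, and giving state 1 its guaranteed seat pushes whichever state loses a seat to $\lfloor \tilde q_i\rfloor - 1$. Your appeals to Theorem \ref{no_vio_n=2} (to justify that the two-state allocation is exact) and to the sequential, house-monotone construction (to identify the $(M-1)$-seat allocation) make explicit steps that the paper asserts directly.
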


\begin{proof}
Suppose $d = 0$. Then $\tilde{q}_3 = \lfloor \tilde{q}_3 \rfloor$ is an integer. Since $\tilde{q}_2 = M - \tilde{q}_3$, it follows that $\tilde{q}_2$ is also an integer. When both quotas are integers, the two state apportionment agrees exactly with the quota vector: $\tilde{A}(\tilde{q}_2,\tilde{q}_3) = (\tilde{q}_2,\tilde{q}_3).$

Therefore, in the corresponding three–state apportionment, state 1 must receive one seat, and one seat must be removed from either state 2 or state 3. Because $\tilde{q}_2$ and $\tilde{q}_3$ are integers, removing one seat from either state forces that state to receive strictly fewer seats than its lower quota. Therefore a lower quota violation occurs in $\tilde{A}(M-\tilde{q}_3,\tilde{q}_3)$.
\end{proof} 

\begin{lemma}\label{d.intervals} Let $\tilde{q}_3 = \lfloor \tilde{q}_3 \rfloor + d$, for $d\in [0,1)$. 
There exist numbers
$$
0 \le d^* \le d^{**} \le 1
$$
such that $[0,1)$ decomposes into three intervals
$$I_1 = [0,d^*), \qquad I_2 = [d^*,d^{**}),\qquad I_3 = [d^{**},1),$$
with the following properties:

\begin{enumerate}
\item For $d \in I_1$, a lower quota violation occurs in $\tilde{A}(M-\tilde{q}_3, \tilde{q}_3)$.
\item For $d \in I_2 \cup I_3$, no lower quota violation occurs in $\tilde{A}(M-\tilde{q}_3, \tilde{q}_3)$.
\end{enumerate}
\end{lemma}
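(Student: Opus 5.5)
Fix $k=\lfloor \tilde{q}_3\rfloor$ and let $d$ range over $[0,1)$, so $\tilde{q}_3=k+d$ and $\tilde{q}_2=M-k-d$. Then $\lfloor \tilde{q}_2\rfloor = M-k-1$ for $d\in(0,1)$, and $\tilde q_2 = M-k$ at $d=0$. By Theorem \ref{no_vio_n=2}, the two-state apportionment of $M$ seats among states $2,3$ gives each state its floor or ceiling. So for $d\in(0,1)$ the outcome is either $(M-k,k)$ or $(M-k-1,k+1)$.

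Removing the last seat for state $1$ then gives:
\begin{itemize}
\item $(M-k-1,k-1)$ or $(M-k-2,k+1)$, if the last seat went to a state at its floor (lower quota violation);
\item $(M-k-1,k)$, if the last seat went to a state at its ceiling (no violation).
\end{itemize}
So the task is to describe, as $d$ varies, who gets the last (the $M$-th) seat and at what level.

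The $M$-th seat is decided by comparing two priority values:
\[
\frac{\tilde q_3}{\delta(k-1)} \quad\text{(state 3 going from $k-1$ to $k$)}, \qquad \frac{\tilde q_2}{\delta(M-k-1)} \quad\text{(state 2 going from $M-k-1$ to $M-k$)}.
\]
This holds when the first $M-1$ seats are split as $(M-k-1,k-1)$. The competing configurations, where state 3 already has $k$, are handled the same way.

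Concretely, the $M-1$ seat apportionment is $(M-k-1,k)$ or $(M-k,k-1)$ depending on a threshold in $d$. Because $\tilde q_3$ increases and $\tilde q_2$ decreases in $d$, each priority comparison is monotone in $d$: it is a linear inequality $(k+d)\,\delta(\cdot) \gtrless (M-k-d)\,\delta(\cdot)$. Hence each relevant comparison switches exactly once as $d$ increases.

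I would therefore define:
\begin{itemize}
\item $d^*$ as the point where the $M-1$ seat apportionment switches from giving state 2 its ceiling to giving state 3 its ceiling (equivalently, where the last seat stops landing at a floor);
\item $d^{**}\ge d^*$ as the further switch point that distinguishes which state holds the ceiling.
\end{itemize}
The interval $I_2$ is kept separate for later use; it may be that state 3 receives the last seat at its ceiling $k+1$. Both endpoints are clipped into $[0,1]$.

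The case $d=0$ is Lemma \ref{d.initial=integers}, so $0\in I_1$ whenever $d^*>0$. For small $d$ the seat-$M$ comparison still favors a state sitting at its floor, which gives a violation. For larger $d$, the monotonicity of the priority ratios $\frac{k+d}{M-k-d}$ shows that once the last seat goes to a state at its ceiling, it stays that way. This yields exactly the three-interval decomposition, with violation only on $I_1$.

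The main obstacle is bookkeeping: showing the violation set is a single initial interval rather than a union. For this I would use the single monotone ratio $\rho(d)=\frac{k+d}{M-k-d}$, which is strictly increasing in $d$. Every priority comparison between states 2 and 3 reads $\rho(d) \gtrless \delta(a)/\delta(b)$, so the apportionment of $M$ and of $M-1$ seats each changes at most at one threshold inside the interval. Ordering these two thresholds gives $d^*\le d^{**}$.
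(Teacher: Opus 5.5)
There is a genuine gap at exactly the step you flag as the main obstacle. You list two ways that removing the last seat produces a violation: from $(M-k,k)$ with state 3 taking the last seat, and from $(M-k-1,k+1)$ with state 2 taking the last seat at its floor. You never rule out the second, and the monotone ratio $\rho(d)=\frac{k+d}{M-k-d}$ cannot rule it out. Monotonicity only shows that the $(M-1)$-seat outcome passes through $(M-k,k-1)$, $(M-k-1,k)$, $(M-k-2,k+1)$ in that order as $d$ increases. The two violating outcomes sit at opposite ends, so a priori the violation set is an initial interval \emph{together with} a terminal interval.

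Your claims that the $(M-1)$-seat apportionment ``changes at most at one threshold'' and that ``once the last seat goes to a state at its ceiling, it stays that way'' are therefore unproved. They amount to the inequality $\delta(k)/\delta(M-k-2)\ge \frac{k+1}{M-k-1}=\rho(1^-)$, i.e.\ the threshold beyond which state 2 takes the $M$-th seat at its floor is never reached for $d<1$. This is a property of $\delta$, not a consequence of monotonicity. For instance, take $\delta(0)=0$, $\delta(s)=s+1$ for $1\le s\le 3$, and $\delta(s)=s+\frac12$ for $s\ge 4$. This is strictly increasing with $s\le\delta(s)\le s+1$, so two-state apportionments still respect quota. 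Let $M=10$ and $k=6$.
\begin{itemize}
\item At $d=0.1$ the outcome is $(4,6)$, and state 2 is last at its ceiling since $3.9/\delta(3)=0.975<6.1/\delta(5)\approx 1.109$. There is no violation.
\item At $d=0.9$ the outcome is $(3,7)$, and $3.1/\delta(2)\approx 1.033<6.9/\delta(6)\approx 1.062$. So state 2 takes the tenth seat at its floor, and removing it gives $(2,7)$, a violation.
\end{itemize}
Every ingredient you invoke (two-state quota, monotonicity in $d$) holds for this method, so the argument as written cannot be complete.

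The missing ingredient is what the paper supplies in Remark~\ref{d.noCase4} via Theorem~\ref{36}: for the five methods, the final seat is never given to state 2 while it sits at $\lfloor\tilde q_2\rfloor$. Directly, you need $\delta(k)(M-k-1)\ge (k+1)\,\delta(M-k-2)$. This holds because $\delta(s)/(s+1)$ is nondecreasing for Adams, Dean, Huntington--Hill, modified Webster and modified Jefferson, and because $M-k-2<k$ (since $\tilde q_3>M/2$). With this added, your monotonicity argument does give the three intervals. The inequality $d^*\le d^{**}$ then follows from house monotonicity (state 2 cannot have more seats in the $(M-1)$-seat apportionment than in the $M$-seat one), not merely from ``ordering'' the thresholds.

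Also fix the bookkeeping:
\begin{itemize}
\item Removing state 3's last seat from $(M-k,k)$ gives $(M-k,k-1)$, not $(M-k-1,k-1)$.
\item The configuration before the two competing seats is a split of $M-2$ seats, not $M-1$.
\item Past $d^*$, the outcome $(M-k-1,k)$ puts both states at their floors, rather than state 3 at its ceiling.
\item State 3 taking the last seat at $k+1$ describes $I_3$, not $I_2$.
\end{itemize}
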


\begin{proof}
By Lemma \ref{d.initial=integers}, when $d=0$, a lower quota violation occurs. As $d$ increases, a lower quota violation continues to occur while the third state's priority values increase and the second state's priority values decrease. For these values of $d$ close to zero, $a_2 = \lceil\tilde{q}_2 \rceil$ and $a_3 = \lfloor \tilde{q}_2 \rfloor $, with $a_3$ assigned last by $A$. Eventually, for some $d$, the priority value that puts state three at its lower quota becomes less than the priority value that puts state two at its upper quota. Define $d^*$ to be the first value of $d$ where this happens and define the interval $I_1 = [0,d^*)$. For $d \in I_1$, $a_2 = \lceil\tilde{q}_2 \rceil$ and $a_3 = \lfloor \tilde{q}_3 \rfloor $, with $a_3$ assigned last by $A$. Then $\tilde{A}$ results in the final apportionment with $a_3= \lfloor \tilde{q}_3 \rfloor-1$, a lower quota violation.

For $d\geq d^*$, $A$ assigns $a_2 = \lceil\tilde{q}_2 \rceil$ and $a_3 = \lfloor \tilde{q}_3 \rfloor $, with $a_2$ assigned last, until there is some $d$ that causes state three to be apportioned its upper quota and state two to be apportioned its lower quota, with the last seat assigned to state three by $A$. Call the first value of $d$ where this happens $d^{**}$ and the interval $I_2 = [d^*,d^{**})$. For $d \in I_2$, $a_2 = \lceil \tilde{q}_2 \rceil$ and $a_3 = \lfloor \tilde{q}_3 \rfloor$, with $a_2$ assigned last by $A$. Then $\tilde{A}$ results in the final apportionment with $a_2=\lfloor \tilde{q}_2 \rfloor$, which is a not lower quota violation. %

Define $I_3 = [d^{**},1).$ For $d \in I_3$, $a_2 = \lfloor \tilde{q}_2 \rfloor$ and $a_3 = \lceil \tilde{q}_3 \rceil$ with $a_3$ assigned last by $A$. Then $\tilde{A}$ results in the final apportionment with $a_3= \lfloor \tilde{q}_3 \rfloor$, which is a not lower quota violation. %

Thus lower quota violations occur precisely for $d \in I_1=[0,d^*)$.
\end{proof}

We summarize the results of this lemma with the below table: 

\begin{center}
\begin{tabular}{ |c|c|c|c|c| } 
\hline
Interval & $A$ assigns $a_2$ & $A$ assigns $a_3$ & Assigned Last & Lower Quota Violation?\\
\hline
$I_1 = [0, d^*)$ & $\lceil \tilde{q}_2 \rceil$ & $\lfloor \tilde{q}_3 \rfloor$ &$a_3$ & Yes \\
$I_2 = [d^*,d^{**})$ & $\lceil \tilde{q}_2 \rceil$ & $\lfloor \tilde{q}_3 \rfloor$ &$a_2$ & No \\
$I_3 = [d^{**},1)$ & $\lfloor \tilde{q}_2 \rfloor$ & $\lceil \tilde{q}_3 \rceil$ &$a_3$ & No\\
\hline
\end{tabular}
\end{center}

\begin{remark} 
It may occur that $d^*=0$, in which case $I_1=\emptyset$. It may also occur that $d^{**}=1$, in which case $I_3=\emptyset$.
\end{remark}

\begin{remark}\label{d.noCase4}
By Theorem \ref{36}, there is no value of $d \in (0,1)$ for which the final seat is assigned to state 2 while $a_2=\lfloor \tilde q_2 \rfloor$.
\end{remark}

In summary, for each fixed $\tilde q_3$, a lower quota violation occurs if and only if $d \in [0,d^*).$
Equivalently, violations occur precisely when $\tilde q_3$ lies in the interval $(\lfloor \tilde q_3 \rfloor,\; \lfloor \tilde q_3 \rfloor + d^*),$ and these hold within each fixed integer interval. 

\subsubsection{A formula for $d^*$}
To find the interval length ending at $d^*$, a function is required that determines $d^*$ given $\lfloor \tilde q_3 \rfloor$.
\begin{deflem}\label{d.star} There exists a function $D(x)$ that calculates $d^*$. \end{deflem}
\begin{proof}
By the interval definitions in Lemma \ref{d.intervals}, the priority value associated with the larger state's final seat is less than the priority value associated with the smaller state's final seat for $d < d^*$. Therefore, $d^*$ occurs precisely when the priority values are equal:
$$ \frac{\lfloor \tilde{q}_3 \rfloor +d^*}{\delta(\lfloor \tilde{q}_3 \rfloor -1)} = \frac{M-\lfloor \tilde{q}_3 \rfloor-d^*}{\delta(M-1-\lfloor \tilde{q}_3 \rfloor)}$$

Solving for $d^*$ yields:
$$ d^* = \frac{(M-\lfloor \tilde{q}_3 \rfloor)\delta(\lfloor \tilde{q}_3 \rfloor-1) - \lfloor \tilde{q}_3 \rfloor \delta(M-\lfloor \tilde{q}_3 \rfloor-1)}{\delta(\lfloor \tilde{q}_2 \rfloor-1) +\delta(M-\lfloor \tilde{q}_2 \rfloor-1)} $$

Since divisor functions satisfy $\delta(k) > 0$ for all admissible $k$, the denominator is strictly positive, and $d^* \in [0,1)$.

This then lets us define the function $D(k): \mathbb{N} \to \mathbb{R}$, for $M$ and $\delta(x)$ given, by 
$$D(k) := \frac{(M-k)\delta(k-1) - k \delta(M-k-1)}{\delta(k-1) +\delta(M-k-1)}$$
\end{proof}

\begin{remark}
Note, while $D(k)$ is defined for any $k \in \mathbb{R}$ where $\delta(k)$ is defined and the denominator is not zero, we will only need it for positive integers $k$. In addition, $D(k)$ can output negative values. $d^*$ will only be defined by $D(k)$ when $D(k)\in [0,1).$ If $D(k) <0, I_1 = \emptyset.$
\end{remark}

\begin{lemma}\label{I1.zero}
Suppose $k = \left\lfloor \frac{M}{2} \right\rfloor$.
If $M$ is even, then $d^*=0$ and $I_1=\{0\}$.
If $M$ is odd, then $d^*<0$ and $I_1=\emptyset$.
In either case, $\mathrm{len}(I_1)=0$.
\end{lemma}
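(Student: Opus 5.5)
The approach is to substitute $k=\lfloor M/2\rfloor$ directly into the closed form $D(k)$ from Definition/Lemma \ref{d.star} and read off its sign. The only property of $\delta$ needed is that it is strictly increasing, which holds for every divisor function by the definition in the introduction. We also use that the denominator $\delta(k-1)+\delta(M-k-1)$ is positive; this was already recorded in Definition/Lemma \ref{d.star}, and we should check that $k\ge 1$ so that the arguments are admissible.

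\textbf{Case $M$ even.} Put $k=M/2$. Then $M-k=k$ and $M-k-1=k-1$, so the numerator of $D(k)$ becomes $k\,\delta(k-1)-k\,\delta(k-1)=0$. Hence $D(k)=0$ and $d^*=0$.

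Interpreting this in Lemma \ref{d.intervals}, $I_1=[0,d^*)$ is degenerate. However, Lemma \ref{d.initial=integers} shows that the single point $d=0$, where $\tilde q_3=\tilde q_2=M/2$ are integers, still yields a violation. So we record $I_1=\{0\}$, a set of length $0$. Note also that $\tilde q_3=M/2$ means $\tilde q_2=\tilde q_3$, which is the boundary excluded by $\tilde q_3>\tilde q_2$, so only the endpoint matters here.

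\textbf{Case $M$ odd.} Put $k=(M-1)/2$. Then $M-k=k+1$ and $M-k-1=k$, so the numerator is
$$(k+1)\,\delta(k-1)-k\,\delta(k).$$
We want this to be negative. Since $\delta$ is strictly increasing, $\delta(k)>\delta(k-1)\ge 0$, so $k\,\delta(k)>k\,\delta(k-1)$. That bound alone only gives
$$(k+1)\,\delta(k-1)-k\,\delta(k)<\delta(k-1),$$
which does not settle the sign. This is the main obstacle: monotonicity by itself is insufficient, and we need the sharper inequality $\delta(k)/\delta(k-1)>(k+1)/k$.

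I would first try to verify this inequality directly for the five methods.
\begin{itemize}
\item Adams: $\delta(s)=s$, so the numerator is $(k+1)(k-1)-k^2=-1<0$.
\item Modified Jefferson: $\delta(s)=s+1$ for $s\ge1$, so the numerator is $(k+1)k-k(k+1)=0$, which is not negative. Here one must use the nonzero-allocation convention $\delta(0)=0$ when $k=1$, or else accept $d^*\le 0$.
\item Webster: the numerator is $(k+1)(k-\tfrac12)-k(k+\tfrac12)=-\tfrac12<0$.
\item Huntington-Hill and Dean: the ratio $\delta(k)/\delta(k-1)$ equals $\sqrt{(k+1)/(k-1)}$ and $\frac{(k+1)(2k-1)}{(k-1)(2k+1)}$ respectively, and each exceeds $(k+1)/k$ by a short algebraic check.
\end{itemize}
If instead the inequality is meant to follow from the hypothesis of Theorem \ref{36}, I would try to extract it from the assumption that $(x+1)/\delta(\lfloor x\rfloor)$ is decreasing. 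Evaluating at $x=k-1$ and $x=k$ (taking the values just before each jump) gives a non-strict version, so strictness, or the Jefferson borderline case, would need separate care.

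Once the sign is established, $D(k)<0$ (or $\le 0$), so $I_1=\emptyset$ and its length is $0$. That completes both cases.
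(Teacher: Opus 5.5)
Your proposal is correct and follows essentially the same route as the paper: substitute $k=\lfloor M/2\rfloor$ into $D(k)$, note that the numerator vanishes when $M$ is even, and for $M$ odd check the sign of $(k+1)\delta(k-1)-k\,\delta(k)$ method by method. The values are $-1$ for Adams, $-\tfrac12$ for Webster, negative for Huntington--Hill and Dean, and $0$ for Jefferson. Like the paper, you obtain only $d^*\le 0$ in the Jefferson case, which still gives $\mathrm{len}(I_1)=0$, and your explicit ratio checks for Huntington--Hill and Dean supply what the paper calls a straightforward computation.
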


\begin{proof} For $M$ even, $\lfloor \frac{M}{2} \rfloor = \frac{M}{2}$. Then, use the function $D$ to calculate $d^*$:
$$ d^* = D \left( \frac{M}{2} \right) =\frac{\left(M-\frac{M}{2}\right) \delta \left(\frac{M}{2}-1\right) - \frac{M}{2} \delta \left(M-\frac{M}{2}-1 \right)}{\delta \left(\frac{M}{2}-1\right)+\delta\left(M-\frac{M}{2}-1 \right)}.$$

Note the numerator is 0 and the denominator is $2\delta(\frac M2 -1)$ is strictly positive, so $d^* = 0$
Thus, $I_1 =[0,d^*) = \emptyset$ and $\text{len} (I_1) = 0$. 

For $M$ odd, $k =\left\lfloor \frac{M}{2} \right\rfloor=\frac{M-1}{2}$. Then $d^* = D(k).$
Since $$D(k) = \frac{(M-k)\delta(k-1)-k\,\delta(M-k-1)}{\delta(k-1)+\delta(M-k-1)},$$
and since $M-k=\frac{M+1}{2}$,and $M-k-1=k$ this simplifies to:
$$D(k) = \frac{\frac{M+1}{2}\, \delta(k-1)-\frac{M-1}{2}\, \delta(k)}{\delta(k-1)+\delta(k)}.$$
The denominator is strictly positive so the sign of $D(k)$ is determined by the numerator
$N := \frac{M+1}{2}\, \delta(k-1) - \frac{M-1}{2}\, \delta(k).$

A direct substitution of the divisor functions yields:
\begin{enumerate}
\item For Adams ($\delta(s)=s$), $N=-1<0$.
\item For Jefferson ($\delta(s)=s+1$), $N=0$.
\item For Webster ($\delta(s)=s+\tfrac12$), $N=-\tfrac12<0$.
\item For Huntington-Hill and Dean, a straightforward computation likewise gives $N<0$.
\end{enumerate}

Thus $D(k)\le 0$ for all five workable methods, with equality occurring only for Jefferson. As such, $d^*\le 0$. Since admissible $d\in[0,1)$, it follows that $
I_1=[0,d^*)= \emptyset$ for $d^*\leq 0$ and therefore $\mathrm{len}(I_1)=0$.
\end{proof}

\begin{remark}\label{d.M/2}
If $\left\lfloor \tilde{q}_2 \right\rfloor = M$, then the associated lower-violation interval $I_1$ has length zero since $D(M) = 0$. If $k < \lceil \frac{M}{2} \rceil$, then $D(k) <0$ and $I_1 = \emptyset.$ Thus, when searching for quota violations, it suffices to consider $\left\lfloor \tilde q_2 \right\rfloor \in [\left \lceil \frac{M}{2} \right \rceil,\, M-1]$.
\end{remark}

\noindent We are now ready to prove the main result of this section: 
\begin{theorem}\label{prob_sum} Let $A$ be a divisor method with divisor function $\delta$ that guarantees nonzero allocations on M seats and $n = 3$ states. Then, under the asymptotic uniform distribution on reduced population vectors $(1, x, y)$, with $1<x<y$,  the probability of a quota violation is the same as the probability of a quota violation caused by states guaranteed nonzero allocations and is calculated by:
$$P(A(1,x,y) \text { has a q.v.}) = \sum_{k = \lceil M/2 \rceil}^{M-1} M\left( \frac{1}{k} - \frac{1}{k + D(k)} \right)$$
where $k$ indexes the integer values of $\lfloor \tilde{q}_3 \rfloor$, where 
$$\tilde{q}_3 = \frac{2M}{3-3\tau}, \qquad \tau = \frac{\mathrm{mean}(1,x,y)-\mathrm{median}(1,x,y)}{\max(1,x,y)}=\dfrac{1 - 2x + y}{3y}.$$ and 
$$D(k) =\frac{(M-k)\delta(k-1)-k\,\delta(M-k-1)}{\delta(k-1)+\delta(M-k-1)}.$$
\end{theorem}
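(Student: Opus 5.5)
By Theorem \ref{zero_one_quota}, the probability of a quota violation, the probability of a quota violation caused by nonzero allocations, and the probability that $\tau$ is ultimately violatory all coincide. This already gives the first claim of the statement. It remains to compute the probability that $\tau$ is ultimately violatory. By Lemma \ref{prob_given_tau_uniform}, $\tau$ is uniform on $(-\frac13,\frac13)$, so this probability is $\frac32$ times the Lebesgue measure of the set of ultimately violatory $\tau$. The finite exceptional set $\mathcal{E}$ has measure zero and can be ignored.

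Next I would use the bijection $\tau = 1 - \frac{2M}{3\tilde q_3}$ to move the problem to the $\tilde q_3$-axis. Since $\tau\in(-\frac13,\frac13)$, we have $\tilde q_3 \in (\frac M2, M)$. By Theorem \ref{X_tau_def/exist} together with Lemma \ref{d.intervals}, $\tau$ is ultimately violatory exactly when $d=\tilde q_3-\lfloor \tilde q_3\rfloor\in[0,d^*)$. By Definition/Lemma \ref{d.star}, $d^*=D(k)$ with $k=\lfloor\tilde q_3\rfloor$. So within the unit interval $[k,k+1)$, the violatory $\tilde q_3$ form the interval $(k,k+D(k))$ when $D(k)>0$, and the empty set otherwise. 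By Lemma \ref{I1.zero} and Remark \ref{d.M/2}, only $k$ with $\lceil M/2\rceil\le k\le M-1$ contribute nonzero length.

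Pulling back to $\tau$ is done with the increasing map $\tilde q_3\mapsto 1-\frac{2M}{3\tilde q_3}$. The interval $(k,k+D(k))$ becomes $\bigl(1-\frac{2M}{3k},\,1-\frac{2M}{3(k+D(k))}\bigr)$, whose length is $\frac{2M}{3}\bigl(\frac1k-\frac1{k+D(k)}\bigr)$. These intervals are disjoint for distinct $k$. Summing over $k$ and multiplying by $\frac32$ gives $\sum_{k}M\bigl(\frac1k-\frac1{k+D(k)}\bigr)$, which is the claimed formula.

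The main obstacle is the bookkeeping on the range of $k$ and the sign of $D(k)$. Terms with $D(k)\le 0$ must contribute zero rather than a negative amount, and we need $D(k)<1$ so that the interval stays inside $[k,k+1)$. I would handle this by noting that for $k\ge\lceil M/2\rceil$ the numerator $(M-k)\delta(k-1)-k\delta(M-k-1)$ is nonnegative for the five methods, since $\delta$ is increasing and $k-1\ge M-k-1$ in the relevant comparisons. For $D(k)<1$, I would invoke Lemma \ref{d.intervals}, which guarantees $d^*\le1$. Where the endpoint case $D(k)=0$ arises (for example $k=M/2$ with $M$ even), Lemma \ref{I1.zero} shows the contribution is zero, consistent with the formula.
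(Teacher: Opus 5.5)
Your proposal is correct and follows the paper's proof essentially step for step. Both reduce via Theorem \ref{zero_one_quota} to the measure of ultimately violatory $\tau$, use $d^*=D(k)$ on each unit interval of $\tilde q_3$, pull back through $\tau=1-\frac{2M}{3\tilde q_3}$, and multiply by $\frac32$.

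Two small fixes to the sign bookkeeping you add, which the paper omits. First, nonnegativity of $(M-k)\delta(k-1)-k\,\delta(M-k-1)$ for $k\ge M/2$ does not follow from $\delta$ being increasing alone. It follows from $s\mapsto\delta(s-1)/s$ being nondecreasing, which a direct check confirms for all five methods with $\delta(0)=0$. Second, $D(M-1)=1$ whenever $\delta(0)=0$, so the condition you need is $D(k)\le 1$, not $D(k)<1$.
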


\begin{proof}
Combining the previous lemmas yields the stated expression. By Theorem \ref{zero_one_quota} it suffices to find the probability that a given $\tau$ is ultimately violatory. Since $\tilde{q}_3 = \frac{2M}{3-3\tau}$ is a bijection, fixing $\tau$ then determines $\tilde{q}_3$, and vice versa.

By Lemma \ref{d.intervals}, for a fixed $\tilde{q}_3$, quota violations can only be lower quota violations and they only occur in the interval $(\lfloor \tilde{q}_3 \rfloor, \lfloor \tilde{q}_3 \rfloor + d^*)$. 

Translate this interval into its corresponding values of $\tau$ through the strictly increasing relation $\tau = 1 - \frac{2M}{3\tilde{q}_3}$ to map this interval to $$(1 - \frac{2M}{3\lfloor \tilde{q}_3 \rfloor}, 1-\frac{2M}{3(\lfloor \tilde{q}_3 \rfloor+D(\lfloor \tilde{q}_3 \rfloor))}),$$ where $D(x)$ calculates $d^*$ as defined in Lemma \ref{d.star}. 

The probability of an ultimately-violatory $\tau$ is then the sum of the length of all such intervals for all choices of $\tilde{q}_3 \in \left\{ \left\lceil \frac{M}{2} \right\rceil, \dots, M-1 \right\}$ by Lemma \ref{d.intervals} and Remark \ref{d.M/2} over the length of all possible $\tau$ values, which is $\frac{2}{3}$ by Lemma \ref{t_thirds}. 

The probability is then:
$$\frac{3}{2} \left( \sum_{\lfloor \tilde{q}_3 \rfloor= \lceil \frac{M}{2} \rceil}^{M-1} \left(1-\frac{2M}{3(\lfloor \tilde{q}_3 \rfloor+D(\lfloor \tilde{q}_3 \rfloor))}\right)- \left(1 - \frac{2M}{3\lfloor \tilde{q}_3 \rfloor}\right)\right)$$ 
which simplifies to $$\sum_{\lfloor \tilde{q}_3 \rfloor= \lceil \frac{M}{2} \rceil}^{M-1} M\left( \frac{1}{\lfloor \tilde{q}_3 \rfloor} - \frac{1}{\lfloor \tilde{q}_3 \rfloor + D(\lfloor \tilde{q}_3 \rfloor)} \right).$$
\end{proof}

This formula has a nice closed form for the Modified Jefferson's method: 

\begin{corollary}\label{jeff_closedform}
Let $A$ be the Modified Jefferson method with divisor function
\[
\delta(s) = \begin{cases} 0 & s = 0, \\ s + 1 & s \geq 1. \end{cases}
\]
Then, under the asymptotic uniform distribution on reduced population vectors $(1, x, y)$
with $1 < x < y$, the probability of a quota violation caused by nonzero allocations is
exactly
\[
P_M^{\mathrm{JEFF}} = \frac{1}{M-1}.
\]
\end{corollary}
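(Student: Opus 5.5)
The plan is to evaluate the sum in Theorem~\ref{prob_sum} directly. For Modified Jefferson the function $D(k)$ vanishes for every index in the range except the top one, $k=M-1$, so the sum collapses to a single term. Throughout I assume $M\ge 3$, as needed for three states.

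First, I take $\lceil M/2\rceil \le k \le M-2$. Then $k-1\ge 1$ (since $M\ge 3$) and $M-k-1\ge 1$, so both divisor values are given by $\delta(s)=s+1$:
$$\delta(k-1)=k, \qquad \delta(M-k-1)=M-k.$$
Substituting into the formula for $D(k)$ gives numerator $(M-k)k-k(M-k)=0$ and denominator $M>0$. Hence $D(k)=0$, and each such summand $M\bigl(\frac1k-\frac1{k+D(k)}\bigr)$ is zero. This is the familiar fact that unconstrained Jefferson yields no lower violation: the equal-priority point sits exactly at $d=0$.

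Second, I treat the boundary index $k=M-1$, which lies in the range because $M-1\ge\lceil M/2\rceil$ for $M\ge 2$. Here $M-k-1=0$, so the modification $\delta(0)=0$ applies, while $\delta(k-1)=\delta(M-2)=M-1$ because $M-2\ge 1$. Then
$$D(M-1)=\frac{1\cdot(M-1)-(M-1)\cdot 0}{(M-1)+0}=1,$$
and the summand equals
$$M\left(\frac1{M-1}-\frac1M\right)=\frac{1}{M-1}.$$
Adding the zero terms from the first step gives $P_M^{\mathrm{JEFF}}=\frac1{M-1}$.

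The main obstacle is justifying the value $D(M-1)=1$. Lemma~\ref{d.star} asserted $d^*\in[0,1)$ and tacitly assumed both divisor values are positive, which fails at this index. I would therefore check it directly from the two-state picture. With $\lfloor\tilde q_3\rfloor=M-1$ we have $\tilde q_2=M-\tilde q_3\in(0,1)$. The guaranteed seat gives state 2 one seat, and the competition for the final ($M$-th) seat compares state 3's priority $\tilde q_3/\delta(M-2)=\tilde q_3/(M-1)$ with state 2's priority $\tilde q_2/\delta(1)=\tilde q_2/2$. Since $\tilde q_3>M-1$, the first exceeds $1$, while the second is below $\tfrac12$. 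So state 3 always receives the last seat at its lower quota $\lfloor\tilde q_3\rfloor$, for every $d\in(0,1)$. By Theorem~\ref{X_tau_def/exist}, case 1, every such $\tau$ is ultimately violatory. Thus $I_1$ is all of $[0,1)$, matching $d^*=1$, and the $\tau$-interval used in the proof of Theorem~\ref{prob_sum} is the full preimage of $(M-1,M)$, which has length $\frac23\cdot\frac1{M-1}$.
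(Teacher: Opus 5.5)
Your proposal is correct and follows the paper's proof. You evaluate the sum from Theorem~\ref{prob_sum}, show that $D(k)=0$ for $\lceil M/2\rceil\le k\le M-2$ and that $D(M-1)=1$ using $\delta(0)=0$, and so collapse the sum to $\frac{1}{M-1}$.

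Two things you add are worth keeping, since the paper omits them. Your direct two-state check at $k=M-1$ confirms that the whole interval $(M-1,M)$ of $\tilde q_3$ values is violatory; Lemma~\ref{d.star} nominally yields only $d^*\in[0,1)$, so it does not by itself justify $D(M-1)=1$. Your note that $k-1\ge 1$ requires $M\ge 3$ also fixes the paper's slightly loose claim that $\delta(k-1)=k$ for all $k\ge 1$.
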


\begin{proof}
By Theorem \ref{prob_sum}, the probability of a quota violation is
\[
P_M^{\mathrm{JEFF}} = \sum_{k=\lceil M/2 \rceil}^{M-1} M\left(\frac{1}{k} - 
\frac{1}{k + D(k)}\right),
\]
where
\[
D(k) = \frac{(M-k)\,\delta(k-1) - k\,\delta(M-k-1)}{\delta(k-1) + \delta(M-k-1)}.
\]
Since $\delta(k) = k+1$ for $k \geq 1$, $\delta(k-1) = k$ for all $k \geq 1$.
For $k \leq M-2$, $M - k - 1 \geq 1$, so $\delta(M-k-1) = M - k$. Substituting:
\[
D(k) = \frac{(M-k)\cdot k - k\cdot(M-k)}{k + (M-k)} = \frac{0}{M} = 0,
\]
so every summand vanishes for $\lceil M/2 \rceil \leq k \leq M-2$.

For $k = M-1$, $M - k - 1 = 0$ and the modified case $\delta(0) = 0$ applies. Substituting:
\[
D(M-1) = \frac{1 \cdot (M-1) - (M-1) \cdot 0}{(M-1) + 0} = \frac{M-1}{M-1} = 1.
\]
Therefore $k + D(k) = M$, and the summand at $k = M-1$ is:
\[
M\left(\frac{1}{M-1} - \frac{1}{M}\right) = M \cdot \frac{1}{M(M-1)} = \frac{1}{M-1}.
\]
Since all other summands are zero, the total probability is
\[
P_M^{\mathrm{JEFF}} = \frac{1}{M-1},
\]
which confirms the value stated in Section~5.4 and agrees with 
$\lim_{M\to\infty} P_M^{\mathrm{JEFF}} = 0$ as given in Theorem \ref{Mtoinftyproblimit}.
\end{proof}

We would now like to lift this result from the reduced population vector $(1,x,y)$ to the original population vector $(p_1,p_2,p_3)$. Apportionments are proportional, so events like lower quota violations are invariant under scaling, but probabilities are not unless the sampling model is scale invariant. A natural choice then is to sample $(p_1,p_2,p_3)$ from a scale-invariant distribution, e.g. as i.i.d. continuous variables with density depending only on their ratios. Proposition \ref{tau_ratio_uniform} gives the assumption needed: that $\frac{p_2}{p_3}$ be uniform on $(0,1)$. Then $\tau$ will be uniform on $(-\frac13, \frac13)$ and we can lift the probability formulas. Without these assumptions on $(p_1,p_2,p_3)$, the probability results from the prior theorem may no longer apply. We formalize this in the following. 

\begin{corollary}\label{prob_lift}
Let $(p_1,p_2,p_3)$ be a random population vector with $p_1 < p_2 < p_3$, and define $x = \frac{p_2}{p_1}$, $y = \frac{p_3}{p_1}$. Assume that the induced ratios satisfy the hypotheses of Proposition \ref{tau_ratio_uniform}; namely, that
$u = \frac{x}{y} = \frac{p_2}{p_3}$ is asymptotically uniformly distributed on $(0,1)$ and $y \to \infty$ in density.

Let $A$ be a divisor method that guarantees nonzero allocations, and let $\tau$ be as in Definition \ref{tau_def}. Then:
\begin{enumerate}
    \item $\tau \sim \mathrm{Unif}\!\left(-\frac{1}{3}, \frac{1}{3}\right)$ asymptotically.
    \item The probability that $A(p_1,p_2,p_3)$ exhibits a lower quota violation caused by nonzero allocations satisfies
    \[
    P(\text{quota violation}) =P(\text{quota violation caused by nonzero allocation})   = P(\tau \text{ is ultimately violatory}).
    \]
\end{enumerate}
In particular, this probability coincides with that obtained under the asymptotic uniform distribution on reduced population vectors $(1,x,y)$.
\end{corollary}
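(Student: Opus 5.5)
The plan is to reduce the corollary to results already proved, using the fact that both the quota-violation events and $\tau$ depend on $(p_1,p_2,p_3)$ only through the reduced vector $(1,x,y)=(1,p_2/p_1,p_3/p_1)$.

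\textbf{Step 1 (reduction to $(1,x,y)$).} By Lemma \ref{neutrality}, $A(p_1,p_2,p_3)$ and $A(1,x,y)$ are the same apportionment. The standard quotas are also scale invariant. Hence the events ``$A(p_1,p_2,p_3)$ is a quota violation'' and ``$A(p_1,p_2,p_3)$ is a quota violation caused by nonzero allocations'' coincide with the corresponding events for $(1,x,y)$. For the second event this also requires checking the three conditions of Definition \ref{vio_as_not0allo_def}; each involves only $A$ and $\tilde A$ applied to scaled vectors, so each is scale invariant as well. By Lemma \ref{tau=proportional}, $\tau(p_1,p_2,p_3)=\tau(1,x,y)$. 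So every probability in the statement is a probability of an event in the $(x,y)$-plane under the distribution induced by the random ratios.

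\textbf{Step 2 (part 1).} Write $u=x/y=p_2/p_3$ and apply Proposition \ref{tau_ratio_uniform} directly. Its hypotheses are exactly the assumed ones: $u$ is asymptotically uniform on $(0,1)$ and $y\to\infty$ in density. This gives $\tau\sim\mathrm{Unif}(-\frac13,\frac13)$ asymptotically.

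\textbf{Step 3 (part 2).} Rerun the proof of Theorem \ref{zero_one_quota}, using only two ingredients from the sampling model.
\begin{enumerate}
\item $\tau$ is asymptotically uniform (Step 2).
\item $P(x<r)\to 0$ for every fixed $r$.
\end{enumerate}
The second ingredient follows from $y\to\infty$ in density together with $u$ uniform. Indeed, $x=uy$, and $P(uy<r)\le P(u<\varepsilon)+P(y<r/\varepsilon)$, where both terms can be made small.

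On the event $x\ge x_\tau$ with $\tau\notin\mathcal E$, Theorem \ref{X_tau_def/exist} shows that a quota violation occurs iff it is a violation caused by nonzero allocations iff $\tau$ is ultimately violatory. The set $\mathcal E$ is finite, so it has probability zero under the (asymptotically) uniform law of $\tau$. It therefore remains to show $P(x<x_\tau)\to0$. For this, split as before: $P(x<x_\tau)\le P(x_\tau>r_\varepsilon)+P(x<r_\varepsilon)$. Remark \ref{xtau_structure} supplies $r_\varepsilon$ with the $\tau$-measure of $\{x_\tau>r_\varepsilon\}$ small, and uniformity of $\tau$ converts that measure into a small probability. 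The second term is small by ingredient 2.

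\textbf{Final claim.} The last sentence of the corollary follows because $P(\tau\text{ ultimately violatory})$ is $\frac32$ times the length of the set of ultimately violatory $\tau$. That set depends only on $M$ and $\delta$, and this is the same quantity computed in Theorem \ref{prob_sum}.

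\textbf{Main obstacle.} The delicate point is the second ingredient: deriving $P(x<r)\to0$ from ``$y\to\infty$ in density''. The notion is only loosely defined, so I would first fix an interpretation, namely $P(y<R)\to0$ for every fixed $R$ along the limiting sequence of distributions. I would then make sure that the convergence of $\tau$ to uniform is strong enough (weak convergence to a continuous limit) to pass to probabilities of the finite unions of intervals $\{x_\tau>r_\varepsilon\}$ and of the violatory set.
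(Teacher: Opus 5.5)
Your proposal is correct, and its skeleton matches the paper's. You reduce to $(1,x,y)$ by proportionality (Lemma \ref{neutrality}) and scale invariance of $\tau$ (Lemma \ref{tau=proportional}), obtain part 1 from Proposition \ref{tau_ratio_uniform}, and obtain part 2 from the stabilization results (Theorems \ref{tau_limit_thm1} and \ref{X_tau_def/exist}).

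The difference is in how part 2 is closed. The paper argues only that, because the violation status stabilizes along each nonexceptional $\tau$-line, the probability of a violation ``given $\tau$'' depends only on $\tau$. It then concludes that the two sampling models agree because they induce the same limiting law of $\tau$. That step tacitly assumes the pre-stabilization region $\{x<x_\tau\}$ has asymptotically negligible mass under the new model, and this does not follow from the law of $\tau$ alone.

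You instead rerun the argument of Theorem \ref{zero_one_quota} in the new model and supply exactly that estimate:
\begin{itemize}
\item[(i)] $P(x<r)\to 0$, derived from the hypothesis $y\to\infty$ in density via $P(uy<r)\le P(u<\varepsilon)+P(y<r/\varepsilon)$;
\item[(ii)] the finite-union-of-intervals structure of $\{x_\tau>r_\varepsilon\}$, which lets weak convergence of $\tau$ turn $\tau$-lengths into probabilities.
\end{itemize}

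Your version is a few lines longer, but it makes explicit where the $y\to\infty$ hypothesis is actually used; the paper uses it only implicitly, through Proposition \ref{tau_ratio_uniform}. It also properly justifies the final identification with the value in Theorem \ref{prob_sum}, since the ultimately violatory set is itself a finite union of intervals.
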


\begin{proof}
By Lemma \ref{neutrality} (Proportionality), divisor methods are invariant under scaling, so
$A(p_1,p_2,p_3) = A(1,x,y).$ Thus quota violation events depend only on the reduced population vector. By Proposition \ref{tau_ratio_uniform}, the assumptions on $(p_1,p_2,p_3)$ imply that the induced distribution of $\tau$ converges to $\mathrm{Unif}\!\left(-\frac{1}{3}, \frac{1}{3}\right)$.

By Theorem \ref{tau_limit_thm1}, for each $\tau$ outside a finite exceptional set, the occurrence or non-occurrence of a lower quota violation for $A(1,x,y(x,\tau))$ stabilizes for all sufficiently large $x$, and is completely determined by whether $\tau$ is ultimately violatory. Therefore, the probability of a lower quota violation given $\tau$ depends only on $\tau$. As such, $P(\text{lower quota violation})= P(\tau \text{ is ultimately violatory}).$

Since both the present model on $(p_1,p_2,p_3)$ and the asymptotic uniform model on $(1,x,y)$ induce the same limiting distribution on $\tau$, the resulting probabilities coincide.
\end{proof}

\subsection{Numerical Behavior of the Probability Formula}
Table~\ref{tab:probabilities} below presents approximate values of the probability of lower quota violations caused by nonzero allocations for the classical divisor methods and increasing values of $M$. Values are rounded to 3 decimal places. 

\begin{table}[H]
\centering
\begin{tabular}{lcccccc}
\toprule
Method & $M=10$ & $M=20$ & $M=50$ & $M=100$ & $M=500$ & $M = 1000$\\
\midrule
Modified Jefferson & 0.111 & 0.053 & 0.020 & 0.010 & 0.002 & 0.001\\
Adams & 0.380 & 0.385 & 0.386 & 0.386 & 0.386 & 0.386\\
Modified Webster & 0.235 & 0.213 & 0.201 & 0.197 & 0.194 & 0.194\\ 
Huntington-Hill & 0.257 & 0.229 & 0.209 & 0.202 & 0.195 & 0.194 \\
Dean & 0.278 & 0.244 & 0.218 & 0.207 & 0.197 & 0.195\\
\bottomrule
\end{tabular}
\caption{Approximate probabilities of lower quota violations caused by nonzero allocations for several divisor methods, computed using Theorem~\ref{prob_sum}. Values rounded to 3 decimal places.}
\label{tab:probabilities}
\end{table}

Several patterns emerge:
\begin{enumerate}
 \item The probabilities appear to converge as $M \to \infty$ to method-dependent constants. The next section will prove this to be true.  
 
 \item Since the Jefferson method has no lower quota violations, the only quota violations that occur will arise from the modified method. As such, the sum $P_M$ in Theorem \ref{prob_sum} above gives the probability for all lower quota violations for three states. This formula simplifies to $\frac{1}{M-1}$ by Corollary \ref{jeff_closedform}. This shows that as $M$ increases, the limit of the probability for a lower quota violation approaches 0.

 \item Adams method exhibits limiting behavior quickly near 0.386. This value turns out to be exactly $2\ln(2) -1 \approx 0.386294$ as shown in the next section. 
 
 \item The Webster, Huntington-Hill, and Dean's method all exhibit limiting behavior, with limiting values near $0.193$. This limit will turn out to be $\ln(2) - \tfrac12 \approx 0.193147$ as shown in the next section. 
 \end{enumerate}

 \begin{remark}
The probabilities in Table~\ref{tab:probabilities} for the Modified Webster method admit a particularly clean interpretation. By Theorem~4.5(b) of \parencite{Self}, Webster's method is 
the unique divisor method for which no quota violations occur in the three-state case when there are no allocation constraints. Therefore, every quota violation appearing in Table~\ref{tab:probabilities} for Modified Webster is entirely attributable to the nonzero allocation constraint. In this sense, Modified Webster isolates the effect of the constraint most cleanly among the five workable methods: its unmodified violation probability is exactly zero, making the values in Table~\ref{tab:probabilities} a pure measure of the constraint's impact. By contrast, the Adams, Dean, and Huntington-Hill methods have nonzero unconstrained violation probabilities for $n = 3$ states, so their entries reflect a combination of the method's inherent tendency to violate quota and the additional effect of the nonzero allocation constraint.
\end{remark}

\subsection{Asymptotic Limit as $M \to \infty$}
We now derive a closed-form expression for the limiting probability of lower quota violations caused by nonzero allocations in $n=3$ seats for different apportionment methods as $M \to \infty$. 

By Corollary \ref{jeff_closedform}, the Modified Jefferson's method has $P_M = \frac{1}{M+1}$ and therefore $$ \lim_{M\to\infty} P_M^{\mathrm{JEFF}} = 0.$$

Next, we analyze Adams Method as $M\to \infty$. 

\begin{theorem}[Limiting Probability for Adams Method]
\label{thm:adams_limit}
Let $A$ be the Adams apportionment method with divisor function $\delta(x)=x$ and let $M$ denote the total number of seats. Then, under the asymptotic uniform distribution on reduced population vectors $(1,x,y)$ with $1<x<y$, the probability of a quota violation converges as $M\to\infty$ to 
$$
P_\infty^{\text{ADAMS}} = \lim_{M \to \infty} \sum_{k=\lceil M/2 \rceil}^{M-1} M \Biggl( \frac{1}{k} - \frac{1}{k + D(k)} \Biggr) = 2\ln(2) -1\approx 0.386294,
$$
where
$$
D(k) = \frac{(2k-M)}{M-2}.
$$
\end{theorem}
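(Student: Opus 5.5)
The plan is to compute $D(k)$ explicitly for $\delta(s)=s$, rewrite each summand of Theorem \ref{prob_sum} as a Riemann-sum term plus a small error, and identify the limit as an elementary integral. First, substituting $\delta(s)=s$ into the definition of $D$ from Definition/Lemma \ref{d.star} gives numerator
$$(M-k)(k-1)-k(M-k-1) = 2k-M$$
and denominator $(k-1)+(M-k-1)=M-2$. This is the stated $D(k)=\frac{2k-M}{M-2}$. For $\lceil M/2\rceil\le k\le M-1$ this yields $0\le D(k)\le 1$, so every term is nonnegative and well defined. By Theorem \ref{prob_sum}, the probability for fixed $M$ is
$$P_M=\sum_{k=\lceil M/2\rceil}^{M-1}\frac{M\,D(k)}{k\,(k+D(k))}.$$

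Next, I will replace each summand by the simpler expression $\frac{M(2k-M)}{M\,k^2}=\frac{2k-M}{k^2}$, controlling the error in two steps.
\begin{enumerate}
\item Since $D(k)\in[0,1]$ and $k\ge M/2$, we have $\left|\frac{MD}{k(k+D)}-\frac{MD}{k^2}\right| = \frac{MD^2}{k^2(k+D)}\le \frac{M}{k^3}\le \frac{8}{M^2}$.
\item Since $|2k-M|\le M$, we have $\left|\frac{MD}{k^2}-\frac{2k-M}{k^2}\right| = \frac{|2k-M|}{k^2}\left|\frac{M}{M-2}-1\right|\le \frac{4}{M}\cdot\frac{2}{M-2}$.
\end{enumerate}
Both per-term errors are $O(1/M^2)$, and there are at most $M/2$ terms. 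Hence $P_M=\sum_k \frac{2k-M}{k^2}+O(1/M)$.

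Writing $t=k/M$, each term equals $\frac{1}{M}f(k/M)$ with $f(t)=\frac{2t-1}{t^2}=\frac{2}{t}-\frac{1}{t^2}$, and $k/M$ ranges over a grid of mesh $1/M$ covering $[1/2,1)$. Since $f$ is Lipschitz on $[1/2,1]$, the Riemann sum differs from $\int_{1/2}^1 f(t)\,dt$ by $O(1/M)$, with the possibly missing endpoint term also $O(1/M)$. The integral is
$$2\ln 2-\left[-\frac{1}{t}\right]_{1/2}^{1}=2\ln 2-1.$$
Letting $M\to\infty$ then gives $P_\infty^{\text{ADAMS}}=2\ln 2-1\approx 0.386294$.

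The only step needing care is making the error terms uniform in $k$, rather than passing to the limit termwise. The bounds $0\le D(k)\le1$ and $k\ge M/2$ are exactly what make all per-term errors $O(1/M^2)$, and the remaining work is the routine Riemann-sum bookkeeping at the endpoints $k=\lceil M/2\rceil$ and $k=M-1$.
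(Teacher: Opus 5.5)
Your proof is correct and follows the same route as the paper: substitute $\delta(s)=s$ to get $D(k)=\frac{2k-M}{M-2}$, reduce each summand to $\frac{1}{M}\cdot\frac{2t-1}{t^2}$ with $t=k/M$, and recognize a Riemann sum for $\int_{1/2}^{1}\frac{2t-1}{t^2}\,dt=2\ln 2-1$. Your per-term $O(1/M^2)$ error bounds, which are uniform in $k$ because $0\le D(k)\le 1$ and $k\ge M/2$, together with the Lipschitz and endpoint bookkeeping, make rigorous what the paper's chain of $\sim$ approximations leaves implicit.
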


\begin{proof}
When $\delta(k)=k$, $D(k)$ from Theorem \ref{prob_sum} simplifies to $D(k) = \frac{2k-M}{M-2}$ . Define the variable $x = \frac{k}{M} \in \left[\tfrac{1}{2},1\right].$

Then, for large $M$, 
$$D(k) = \frac{2k-M}{M-2} \sim \frac{2k-M}{M} = 2x - 1.$$

The summand in $P_M$ then becomes:
\begin{align*}
M \left( \frac{1}{k} - \frac{1}{k + D(k)} \right) & = M \left( \frac{D(k)}{k(k+D(k)} \right) \\ &\sim M\left( \frac{2x-1}{(Mx)(Mx+2x-1)}\right)\\
 & \sim M\left( \frac{2x-1}{(Mx)(Mx)}\right)\\
 & \sim \frac{2x-1}{x^2}\cdot\frac1M\\
\end{align*}
As $M \to \infty$, the sum over $k$ becomes an integral over $x$, and $\frac 1M$ becomes $dx$:
$$
\sum_{k=\lceil M/2 \rceil}^{M-1} M \left(\frac{1}{k} - \frac{1}{k + D(k)}\right) \sim \int_{1/2}^{1} \left( \frac{2x-1}{x^2} \right) dx.
$$

Evaluate the integral to get 
$$P_\infty^\text{ADAMS} = 2\ln(2)-1\approx 0.386294.$$
\end{proof}

The next result identifies a common asymptotic behavior for a broad class of divisor methods whose divisor functions differ from linear growth by a bounded perturbation. In particular, the theorem shows that the limiting probability of a quota violation depends only on the first-order structure of the divisor function, leading to a common limit shared by several classical methods.

\begin{theorem}[Limiting Probability for Asymptotically Linear Divisor Methods]\label{thm:limit_kplusahalf}
Let $A$ be a divisor method with divisor function satisfying
$$
\delta(k)=k+\frac{1}{2}+O\!\left(\frac{1}{k}\right).
$$
Let
$$
P_M=\sum_{k=\lceil M/2\rceil}^{M-1} M\left(\frac{1}{k}-\frac{1}{k+D(k)}\right),
$$
where
$$D(k)=\frac{(M-k)\delta(k-1)-k\,\delta(M-k-1)}{\delta(k-1)+\delta(M-k-1)}.$$
Then
$$\lim_{M\to\infty} P_M = \ln 2 - \frac{1}{2}.$$

In particular, this holds for the Huntington-Hill, Modified Webster, and Dean method. 
\end{theorem}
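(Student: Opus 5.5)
The plan mirrors the proof of Theorem \ref{thm:adams_limit}: expand $D(k)$ to leading order in $M$, rescale $k=Mx$ with $x\in[\tfrac12,1]$, and recognize $P_M$ as a Riemann sum. The new ingredient is showing that the perturbation $\tfrac12+O(1/k)$ in $\delta$ changes only lower-order terms, except near the endpoint $k\approx M$, where $\delta(M-k-1)$ is evaluated at small arguments and the $O(1/k)$ expansion gives no useful control.

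\emph{Step 1: expand $D(k)$.} Write $\delta(s)=s+\tfrac12+\epsilon(s)$ with $|\epsilon(s)|\le C/(s+1)$. Then
$$\delta(k-1)=k-\tfrac12+\epsilon(k-1),\qquad \delta(M-k-1)=M-k-\tfrac12+\epsilon(M-k-1).$$
Substituting into $D(k)$, the main parts of the numerator are
$$(M-k)(k-\tfrac12)-k(M-k-\tfrac12)=k-\tfrac{M}{2}.$$
The denominator is $M-1+\epsilon(k-1)+\epsilon(M-k-1)$. Collecting the error terms gives
$$D(k)=\frac{k-\tfrac M2+E(k)}{M-1+\epsilon(k-1)+\epsilon(M-k-1)},\qquad E(k)=(M-k)\,\epsilon(k-1)-k\,\epsilon(M-k-1).$$
Since $k\ge M/2$, we have $(M-k)|\epsilon(k-1)|\le C(M-k)/k\le 2C$. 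The other piece satisfies $k|\epsilon(M-k-1)|\le CM/(M-k)$. So for $M-k\ge \eta M$ with fixed $\eta>0$, $E(k)=O(1/\eta)$. Hence, uniformly for $x=k/M\in[\tfrac12,1-\eta]$,
$$D(k)=x-\tfrac12+O\!\left(\tfrac{1}{\eta M}\right).$$
This is exactly half of the Adams value $2x-1$, which is why the limit will be half of Adams'.

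\emph{Step 2: Riemann sum on the bulk.} For such $k$, $D(k)=O(1)$ while $k\ge M/2$. Therefore
$$M\left(\frac1k-\frac1{k+D(k)}\right)=\frac{MD(k)}{k(k+D(k))}=\frac1M\cdot\frac{x-\tfrac12}{x^2}\bigl(1+O(1/M)\bigr)+O\!\left(\frac{1}{\eta M^2}\right).$$
Summing over $k$ from $\lceil M/2\rceil$ to $(1-\eta)M$ gives
$$\int_{1/2}^{1-\eta}\frac{x-\tfrac12}{x^2}\,dx+o(1).$$

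\emph{Step 3: the tail, the main obstacle.} For $(1-\eta)M<k\le M-1$ we need only a uniform bound $D(k)=O(1)$. Then each summand is $O(1/M)$, and the $\eta M$ tail terms contribute $O(\eta)$. The argument $M-k-1$ is small here, so $\delta(M-k-1)$ is bounded below only by $0$ (for Modified Webster, $\delta(0)=0$). Still, the numerator is at most $(M-k)\delta(k-1)\le \eta M\cdot O(M)$. The denominator is at least $\delta(k-1)\ge cM$. This gives $|D(k)|=O(\eta M)$, which is not good enough, so the argument needs a sharper step.

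For the sharper bound I would use directly that the paper's construction takes $d^*\in[0,1)$ whenever it is positive (Lemma \ref{d.intervals}, Definition/Lemma \ref{d.star}). The summand is positive only when $0<D(k)<1$, and then it is at most $M/k-M/(k+1)\le 4/M$. Negative $D(k)$ contributes nothing by the convention $I_1=\emptyset$, so I would read the summands as $\max(0,\cdot)$. With this, the tail is $O(\eta)$.

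\emph{Step 4: conclude.} Let $M\to\infty$ and then $\eta\to0$. This yields
$$\lim_{M\to\infty}P_M=\int_{1/2}^{1}\left(\frac1x-\frac1{2x^2}\right)dx=\ln2-\tfrac12.$$
Finally, verify the hypothesis for each method. Modified Webster satisfies it exactly; the single value $\delta(0)=0$ is absorbed by Step 3. For Huntington-Hill, $\sqrt{s(s+1)}=s+\tfrac12-\tfrac1{8s}+O(s^{-2})$. For Dean, $\frac{2s(s+1)}{2s+1}=s+\tfrac12-\frac{1}{2(2s+1)}$.
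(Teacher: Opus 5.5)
Your strategy is the paper's: replace $\delta$ by its leading term, obtain $D(k)\approx x-\tfrac12$ (half the Adams value), and read $P_M$ as a Riemann sum for $\int_{1/2}^{1}\frac{x-1/2}{x^2}\,dx=\ln 2-\tfrac12$. The paper does this only formally. It sets $\delta(k-1)=k-\tfrac12$ and $\delta(M-k-1)=M-k-\tfrac12$ outright, and never treats $k$ near $M$. Your explicit error term $E(k)$ and the bulk/tail split are a real improvement, and Steps~1, 2 and~4 are fine. Step~3, however, does not hold up as written.

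The patch you use in Step~3 is not valid under the theorem's hypothesis.
\begin{itemize}
\item The bound $0<D(k)<1$ is only asserted in Definition/Lemma~\ref{d.star}; positivity of the denominator does not give $d^*<1$.
\item It already fails at $k=M-1$ for the modified methods, where $\delta(0)=0$ forces $D(M-1)=1$.
\item For a general $\delta$ with $\delta(k)=k+\tfrac12+O(1/k)$ it fails outright. For fixed $j$ one has $D(M-j)\to j-\delta(j-1)$ as $M\to\infty$. Taking $\delta(0)=0$, $\delta(1)=\tfrac12$, $\delta(s)=s+\tfrac12$ for $s\ge2$ gives $D(M-2)\to\tfrac32$. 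Taking $\delta(1)=\tfrac{11}{5}$ instead gives $D(M-2)\to-\tfrac15$.
\end{itemize}
Moreover, reading the summands as $\max(0,\cdot)$ changes the quantity $P_M$ that the statement defines. You give no reason that this change is negligible.

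Neither device is needed. The obstacle comes only from bounding the numerator by $(M-k)\delta(k-1)$, which throws away the cancellation that your own Step~1 identity keeps. Drop the restriction $M-k\ge\eta M$ and use only $M-k\ge1$.
\begin{itemize}
\item Then $k\,|\epsilon(M-k-1)|\le CM/(M-k)\le CM$, so $|E(k)|\le C+CM$.
\item Also $|k-\tfrac{M}{2}|\le\tfrac{M}{2}$, and the denominator is at least $\delta(k-1)\ge\tfrac{M}{2}-1$ because $\delta\ge0$.
\item Hence $|D(k)|\le B$ for all $\lceil M/2\rceil\le k\le M-1$, with $B$ depending only on $C$.
\end{itemize}
Here you do need $|\epsilon(s)|\le C/(s+1)$ down to $s=0$. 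This holds because only finitely many integers lie below the range of the $O(1/k)$ bound, and $\delta$ is finite there.

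Every summand then satisfies $\bigl|\tfrac{MD(k)}{k(k+D(k))}\bigr|\le\tfrac{MB}{(M/2)(M/2-B)}=O(1/M)$. So the at most $\eta M+1$ tail terms contribute $O(\eta)$ to the sum exactly as defined, for every $\delta$ in the class. Step~4 then goes through unchanged.
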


\begin{proof}
For the Huntington-Hill method, the divisor function $\delta(k) = \sqrt{k(k+1)}$ has Laurent expansion $k + \frac12 -\frac{1}{8k}+\ldots$. After polynomial division, the divisor function for Dean's method is $\delta(k) = \frac{2k(k+1)}{2k+1}= k + \frac 12 - \frac{1}{2(2k+1}$ . Along with the divisor function for Webster's method, $\delta(k) = k +0.5$, all three apportionment methods satisfy the assumption of the theorem. 

Using the assumption $\delta(k) \sim k + \frac 12$, write
$$\delta(k-1)=k-\frac{1}{2} \qquad \delta(M-k-1)=(M-k)-\frac{1}{2}.$$

Substitute into $D(k)$ to get 
$$
D(k) =\frac{(M-k)(k-\frac{1}{2}) - k((M-k)-\frac{1}{2})}{(k-\frac{1}{2})+((M-k)-\frac{1}{2})} =\frac{2k-M}{2(M-1)}$$

Since $D(k)<1$ and $k>1, \frac{D(k)}{k}<1$, by the geometric series, 
$$
 \frac{1}{k+D(k)} = \frac1k \cdot \frac{1}{1 + \frac{D(k)}{k}} = \frac1k \left(1 -\frac{D(k)}{k} + \left(\frac{D(k)}{k}\right)^2+\ldots\right) = \frac1k - \frac{D(k)}{k^2} +\ldots
$$

where the remaining terms are of order $\frac{1}{M^3}$.
The expression in the summand of $P_M$ then becomes
$$ M\left(\frac{1}{k}-\frac{1}{k+D(k)}\right) \sim M\left(\frac{1}{k}-\left(\frac1k - \frac{D(k)}{k^2}\right)\right) = M\cdot \frac{D(k)}{k^2}$$

Set $x = \frac kM \in [\tfrac12,1]$. Then as $M \to \infty, D(k)\sim \frac{2x-1}{2}$. Then 
$$
M \left( \frac{1}{k} - \frac{1}{k + D(k)}\right) \sim M\cdot \frac{D(k)}{k^2} \sim M\frac{(2x-1)}{x^2M^2} = \frac{2x-1}{x^2}\cdot \frac 1M
$$

The sum is a Riemann sum, so as $M \to \infty,$
$$
P_M \to \int_{1/2}^{1} \frac{2x-1}{2x^2}\,dx.
$$
Evaluating this integral gives $$P_\infty^\text{HH} = P_\infty^\text{DEAN} = P_\infty^\text{WEB} = \ln2 - \frac12 \approx 0.1932. $$
\end{proof}
Summarizing the results of this section:
\begin{theorem}\label{Mtoinftyproblimit}
    Under the asymptotic uniform distribution for vectors $(1,x,y)$ with $1<x<y$, we have as $M \to \infty$ that the probability of a quota violation caused by nonzero allocations converges to $2 \ln(2) - 1 \approx 0.386294$ in the Adams method, to $0$ in the Modified Jefferson method, and to $\ln(2) - \frac{1}{2} \approx 0.1932$ in the Huntington-Hill, Modified Webster, and Dean methods.
\end{theorem}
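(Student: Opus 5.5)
This summary theorem has three parts, and I will take them one at a time, each resting on a result already established. For the modified Jefferson method I will invoke Corollary \ref{jeff_closedform}. It gives the exact value $P_M^{\mathrm{JEFF}} = \frac{1}{M-1}$, and this tends to $0$ as $M\to\infty$. The text above misquotes it as $\frac{1}{M+1}$, but that has the same limit. For Adams I will cite Theorem \ref{thm:adams_limit}, which gives $2\ln 2 - 1$. For Huntington-Hill, modified Webster and Dean I will cite Theorem \ref{thm:limit_kplusahalf}, after checking that each divisor function has the form $k+\frac12+O(1/k)$, which that proof already verifies. Throughout, Theorem \ref{zero_one_quota} lets me identify the probability of a quota violation caused by nonzero allocations with the sum $P_M$ of Theorem \ref{prob_sum}.

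Because both limit theorems are argued with heuristic "$\sim$" steps, I will tighten the Riemann-sum passages. Write $x_k = k/M$. Each summand is
\[
M\Bigl(\frac1k-\frac1{k+D(k)}\Bigr)=\frac{M\,D(k)}{k(k+D(k))}.
\]
Adams has the exact form $D(k)=\frac{2k-M}{M-2}$, which lies in $[0,1]$ on the summation range. This gives the summand as $\frac1M\cdot\frac{2x_k-1}{x_k^2}$ plus an error of order $O(1/M^2)$, uniformly in $k$. Summing about $M/2$ terms then gives a Riemann sum for $\int_{1/2}^1 \frac{2x-1}{x^2}\,dx = 2\ln2-1$, with total error $O(1/M)$.

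For the $k+\frac12$ class, I will write $\delta(s)=s+\frac12+\varepsilon(s)$ with $|\varepsilon(s)|\le C/(s+1)$. Substituting into $D(k)$, the numerator becomes
\[
\tfrac{2k-M}{2}+(M-k)\varepsilon(k-1)-k\,\varepsilon(M-k-1),
\]
and the denominator is $M-1+O(1)$.

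The main obstacle is the boundary term $k\,\varepsilon(M-k-1)$ near $k=M-1$. There $\varepsilon(M-k-1)$ is $O(1)$, not small, so $D(k)$ can deviate by $O(1)$ from $\frac{2k-M}{2(M-1)}$. For example, Huntington-Hill has $\delta(0)=0$. I will handle this by bounding the error in $D(k)$ by $O\bigl(\frac{1}{M}+\frac{k}{M(M-k)}\bigr)$. After dividing by $k^2/M \asymp M$, the total contribution is of order
\[
\frac1M\sum_{j\ge1}\frac1{jM}\cdot M=O\Bigl(\frac{\log M}{M}\Bigr)\to0.
\]
Separately, I will check that every single summand is $O(1/M)$ because $0\le D(k)\le 1$. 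This is where the restriction $d^*\in[0,1)$ enters, and it makes finitely many boundary terms negligible.

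The main terms then form a Riemann sum for $\int_{1/2}^1\frac{2x-1}{2x^2}\,dx=\ln2-\frac12$. In the earlier proof the factor $\frac12$ was lost in one displayed line, but the final integral there correctly contains it. Collecting the three cases proves the theorem.
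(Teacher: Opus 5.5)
Your proposal is correct and follows the paper's route: Theorem \ref{Mtoinftyproblimit} is just the collection of Corollary \ref{jeff_closedform}, Theorem \ref{thm:adams_limit}, and Theorem \ref{thm:limit_kplusahalf}, and both of your typo observations are accurate ($\frac{1}{M+1}$ should be $\frac{1}{M-1}$, and a factor $\frac12$ is dropped in one displayed line). Your added error analysis makes the same argument more rigorous rather than changing it. The paper's substitution $\delta(k-1)=k-\frac12$, $\delta(M-k-1)=M-k-\frac12$ silently ignores that $\varepsilon(M-k-1)$ is not small near $k=M-1$ (e.g.\ $\delta(0)=0$), and your bound $O\bigl(\frac{1}{M}+\frac{k}{M(M-k)}\bigr)$ on the error in $D(k)$, which sums to $O(\log M/M)$, is exactly what closes that gap.
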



\subsection{Probability of Quota Violations along $\tau$-Lines }

Next, for divisor methods $A$ corresponding to Huntington-Hill, modified Webster, modified Jefferson, or Dean, extend the results of the previous section to characterize when quota violations occur due to a guaranteed seat. 

To do this, the first step is to recall the threshold $x_\tau^F$ that quantifies when the asymptotic behavior in Theorem \ref{X_tau_def/exist} effectively holds. Specifically, for $x> x_\tau^F$, the guaranteed-seat state causes floor stabilization and potential quota violations. Equivalently, $x_{\tau}^F$ (from Theorem \ref{X_tau_def/exist}) is the infimum of all $z$ such that for all $x > z$, the following floor conditions hold (i.e., the floors stabilize).
\begin{itemize}
\item $\left\lfloor \frac{M(1-3\tau)}{(3-3\tau)x-3\tau} \right\rfloor = 0$

\item $\left\lfloor \frac{M(1-3\tau)x}{(3-3\tau)x-3\tau} \right\rfloor = \left\lfloor \frac{M(1-3\tau)}{3-3\tau} \right\rfloor$

\item $\left\lfloor \frac{2M(1-3\tau)x - M + 3M\tau}{(3-3\tau)(1-3\tau)x - 3\tau(1-3\tau)} \right\rfloor = \left\lfloor \frac{2M}{3-3\tau} \right\rfloor.$
\end{itemize}

By Theorem \ref{X_tau_def/exist}, for such $x$, the apportionment converges to $\tilde{A}(\tilde{q}_2, \tilde{q}_3)$, with $x_\tau$ being the infinum over all $z > x_\tau^F$ such that $\tilde{A}(\tilde{q}_2, \tilde{q}_3)=A(1,z,y(z,\tau))$. \\
\\
On a $\tau$-line, parametrized in $y(x,\tau)$
$$ y(x,\tau)= \frac{2x-1}{1-3\tau} = \frac{2}{1-3\tau}x - \frac{1}{1-3\tau} $$

Define then for a fixed $\tau$,
$$ y_\tau = y(x_\tau,\tau) \qquad\text{and}\qquad y_\tau^F = y(x_\tau^F,\tau)$$

Also recall along a $\tau$-line, the reduced population $(1,x,y)$ can be parametrized as $(1,x(y,\tau),y)$, where $x(y, \tau) = \frac{1}{2}y - \frac{3}{2}\tau y + \frac{1}{2},$ so that the total population is $1 + x(y,\tau) + y = \frac{3-3\tau}{2}y + \frac32$. Then the standard quotas, as functions of $y$, are: 
$$ q_1(y) = \frac{M}{1 + x(y,\tau) + y}, q_2(y) = \frac{Mx(y,\tau)}{1 + x(y,\tau)+y}, q_3(y) = \frac{My}{1 + x(y,\tau) + y}. $$
Next, determine $y_\tau^F$, the smallest value of $y$ such that the floor stabilization conditions hold along the $\tau$-line.

\begin{deflem}[Guaranteed-Seat Threshold]\label{y_tau.def}
Fix $\tau \in (-\tfrac{1}{3}, \tfrac{1}{3})$. There exists a constant $y_{\tau}^F > 0$ such that for all $y > y_{\tau}^F$, the floors of the standard quotas of the distribution $(1, x(y,\tau), y)$ stabilize:
$$\bigl(\lfloor q_1(y)\rfloor, \lfloor q_2(y)\rfloor, \lfloor q_3(y)\rfloor\bigr)
=\bigl(0, \lfloor \tilde q_2 \rfloor, \lfloor \tilde q_3 \rfloor \bigr),$$

where
$$\tilde q_2 = \frac{M(1-3\tau)}{3-3\tau}, \qquad \tilde q_3 = \frac{2M}{3-3\tau}.$$
\end{deflem}

\begin{proof}
Analyze each quota separately. For state 1,
$$ q_1(y) = \frac{M}{\frac{3-3\tau}{2}y + \frac{3}{2}}.$$
This is strictly decreasing in $y$ and satisfies $q_1(y) \to 0$ as $y \to \infty$. Then there exists $y_1(\tau)$ such that for all $y > y_1(\tau)$,
$0 < q_1(y) < 1$, so $\lfloor q_1(y) \rfloor = 0$.

For state 3, 
$$q_3(y) = \frac{My}{\frac{3-3\tau}{2}y + \frac{3}{2}}= \frac{2M}{3-3\tau + \frac{3}{y}}.$$
So $q_3(y)$ is strictly increasing and converges to $\tilde q_3$ as $y \to \infty$.

If $\tilde q_3 \in \mathbb{Z}$, then $q_3(y)$ converges monotonically to $\tilde q_3$, so there exists $y_3(\tau)$ such that $\lfloor q_3(y) \rfloor = \tilde q_3 - 1$ for all sufficiently large $y$. 

If $\tilde q_3 \notin \mathbb{Z}$, choose $\epsilon > 0$ small enough that
$$(\tilde q_3 - \varepsilon, \tilde q_3 + \varepsilon)
\subset \bigl(\lfloor \tilde q_3 \rfloor, \lfloor \tilde q_3 \rfloor + 1\bigr).$$

Then there exists $y_3(\tau)$ such that for all $y > y_3(\tau)$,
$|q_3(y) - \tilde q_3| < \epsilon,$ and as such, $\lfloor q_3(y) \rfloor = \lfloor \tilde q_3 \rfloor.$

For state 2, use $x(y,\tau)$:
$$
q_2(y) = \frac{Mx(y,\tau)}{\frac{3-3\tau}{2}y + \frac{3}{2}}
= \frac{M\left(\frac{1}{2}y - \frac{3}{2}\tau y + \frac{1}{2}\right)}{\frac{3-3\tau}{2}y + \frac{3}{2}}.
$$
A direct computation shows that $q'_2(y)$ has constant sign depending on $\tau$:
$$q_2'(y)
\begin{cases}
>0 & \text{if } \tau < 0,\\
=0 & \text{if } \tau = 0,\\
<0 & \text{if } \tau > 0,
\end{cases}
$$
so $q_2(y)$ is monotone and converges to $\tilde{q}_2$ as $y \to \infty$.

Thus, as for state 3, there exists $y_2(\tau)$ such that for all $y > y_2(\tau)$,
$\lfloor q_2(y) \rfloor = \lfloor \tilde q_2 \rfloor.$
Finally, define
$$y_{\tau}^F := \max\{y_1(\tau), y_2(\tau), y_3(\tau)\}.$$
Then for all $y > y_{\tau}^F$,
$$(\lfloor q_1(y)\rfloor, \lfloor q_2(y)\rfloor, \lfloor q_3(y)\rfloor)
=\left(0, \lfloor \tilde q_2 \rfloor, \lfloor \tilde q_3 \rfloor\right).$$
Additionally, a direct computation grants:
\begin{align*}
y_1(\tau) &= \frac{2M-3}{3-3\tau}\\
y_2(\tau) &= \left\{ \begin{array}{lcl} \frac{3 \lfloor \tilde{q}_2 \rfloor -M}{M-3M\tau-3\lfloor \tilde{q}_2 \rfloor+3\tau \lfloor \tilde{q}_2 \rfloor} & \tau<0 \\ 1 &\tau=0 \\ \frac{3 (\lfloor \tilde{q}_2 \rfloor+1) -M}{M-3M\tau-3(\lfloor \tilde{q}_2 \rfloor+1)+3\tau (\lfloor\tilde{q}_2 \rfloor+1)} & \tau >0  \end{array} \right. \\
y_3(\tau) &= \frac{3 \lfloor \tilde{q}_3 \rfloor}{2M-3\lfloor \tilde{q}_3 \rfloor +3\tau\lfloor \tilde{q}_3 \rfloor}.
\end{align*}

\end{proof}

Note by definition and construction, $y_{\tau}^F \leq y_\tau$ since $y_\tau$ imposes more conditions. For a fixed $\tau$, the constant $y_{\tau}^F$ identifies the smallest reduced population of the largest state along the $\tau$-line for which the floors of all standard quotas stabilize. When $y > y_{\tau}^F$, the guaranteed seat for state 1 could create an apportionment in such a way that state 3 receives fewer seats than its lower quota, producing a quota violation. This observation motivates the following lemma.

\begin{lemma}\label{y_tau.violation}
Let $\tau$ be ultimately violatory. If $y > y_{\tau}^F$, then the apportionment $\tilde A_1(1, x(y,\tau), y)$ produces a quota violation caused by the guaranteed seat of state 1. Equivalently, if $\tau$ is ultimately violatory $y_\tau^F = y_\tau$
\end{lemma}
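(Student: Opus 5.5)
The plan is to show directly that once the floors have stabilized ($y > y_\tau^F$), the apportionment along the $\tau$-line is already the limiting one. The quota violation then follows from Theorem \ref{X_tau_def/exist}, and the equality $y_\tau^F = y_\tau$ follows from the definitions of the two thresholds.

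\textbf{Step 1: reduce to the known case.} Fix an ultimately violatory $\tau \notin \mathcal{E}$. By Lemma \ref{d.intervals} and the discussion after it, $\tilde q_3 = k + d$ with $k = \lfloor \tilde q_3\rfloor$ and $d \in [0,d^*)$. In the limiting two-state apportionment, $a_2 = \lceil \tilde q_2\rceil = M-k$ and $a_3 = k$, with state 3 receiving the last seat. So the limiting three-state apportionment is $\tilde A(\tilde q_2,\tilde q_3) = (1, M-k, k-1)$. For $y > y_\tau^F$, Lemma \ref{y_tau.def} gives $\lfloor q_1\rfloor = 0$, $\lfloor q_2\rfloor = M-k-1$ and $\lfloor q_3\rfloor = k$. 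It therefore suffices to show $A(1,x(y,\tau),y) = (1,M-k,k-1)$ for every such $y$. That allocation gives state 3 fewer than $\lfloor q_3\rfloor$ seats, which is a lower quota violation. Conditions 2 and 3 of Definition \ref{vio_as_not0allo_def} then follow as in Theorem \ref{X_tau_def/exist}, using Theorem \ref{no_vio_n=2} for the two-state apportionment of states 2 and 3.

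\textbf{Step 2: identify the apportionment through priority comparisons.} With populations $(1,x,y)$ and $\delta(0)=0$, state 1 is guaranteed its seat. The allocation $(1,M-k,k-1)$ is the correct one exactly when three inequalities hold:
\begin{align*}
\frac{x}{\delta(M-k-1)} &> \frac{y}{\delta(k-1)}, \\
\frac{x}{\delta(M-k-1)} &> \frac{1}{\delta(1)}, \\
\min\left(\frac{x}{\delta(M-k-1)}, \frac{y}{\delta(k-2)}\right) &> \frac{1}{\delta(1)}.
\end{align*}
The first says state 2 beats state 3 for the contested seat, and the other two say state 1 never earns a second seat.

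Substituting $x = x(y,\tau)$ turns the first inequality into a statement about $q_3(y) = 2M/(3-3\tau+3/y)$. Because the ratio $x/y$ is monotone in $y$, this statement has the same form as the equal-priority equation defining $d^*$ in Lemma \ref{d.star}, with $\tilde q_3$ replaced by $q_3(y)$. Once $\lfloor q_3(y)\rfloor = k$ and $q_3(y)$ increases monotonically to $\tilde q_3 < k + d^*$, we get $q_3(y) - k < d^*$, so the first inequality holds. The remaining two inequalities concern state 1's second seat. I would check them using $q_1(y) < 1$ together with Theorem \ref{36}, which forbids state 1 from being the state above its upper quota.

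\textbf{Step 3: conclude $y_\tau^F = y_\tau$.} Step 2 shows the apportionment agrees with $\tilde A(\tilde q_2,\tilde q_3)$ for all $y > y_\tau^F$, so $y_\tau \le y_\tau^F$. The reverse inequality $y_\tau^F \le y_\tau$ was already noted in the paper, so the two thresholds are equal.

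\textbf{Main obstacle.} The hardest part is the second-seat condition for state 1 in Step 2, namely that $y/\delta(k-2)$ and $x/\delta(M-k-1)$ both exceed $1/\delta(1)$ whenever $y > y_1(\tau)$. I would try to derive it from $q_1 < 1$, using that the relevant divisor values are comparable to the quotas. If that derivation fails, I would instead enlarge $y_\tau^F$ to include this constraint, noting that it is implied by the floor conditions for the methods considered.
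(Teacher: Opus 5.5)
Your outline works and is essentially the paper's argument made concrete. The paper uses Theorem~\ref{36} to reduce to two candidates, $(1,\lfloor\tilde q_2\rfloor,\lfloor\tilde q_3\rfloor)$ and $(1,\lfloor\tilde q_2\rfloor+1,\lfloor\tilde q_3\rfloor-1)$. It notes that the second is the eventual apportionment. It then rules out a switch from the first to the second as $y$ grows by population monotonicity, since $y/x$ increases along the $\tau$-line. You instead verify the decisive priority comparison directly. That gives a slightly stronger fact: state 2 beats state 3 for the contested seat at every point of the line. Two steps, however, need repair.

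\textbf{The decisive inequality.} The inequality $x/\delta(M-k-1) > y/\delta(k-1)$ is not the $d^*$-equation of Definition/Lemma~\ref{d.star} with $\tilde q_3$ replaced by $q_3(y)$. On the line $q_2(y)=M-q_1(y)-q_3(y)$, not $M-q_3(y)$, so the inequality is not equivalent to $q_3(y)-k<d^*$. On a violatory line the implication you draw happens to be true, but the equivalence you state is false. The clean justification is the ratio argument you allude to. We have $x/y=\frac{1-3\tau}{2}+\frac{1}{2y}$, which decreases to $\frac{1-3\tau}{2}=\tilde q_2/\tilde q_3$. The condition $d<d^*$ says exactly that $\tilde q_2/\tilde q_3>\delta(M-k-1)/\delta(k-1)$. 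Hence the inequality holds for all $y$.

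\textbf{The characterization of $(1,M-k,k-1)$.} Your three inequalities do not characterize this apportionment. The max--min condition also requires $y/\delta(k-2)>x/\delta(M-k)$, meaning state 3 keeps its $(k-1)$th seat against state 2's $(M-k+1)$th seat. Meanwhile, your second inequality is already contained in the third.

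Both this omission and your ``main obstacle'' disappear if you use Theorem~\ref{36} to enumerate candidates rather than to check individual priorities. Since $0<q_1<1$, there are two cases:
\begin{itemize}
\item If there is no violation, then $a_1\le\lceil q_1\rceil=1$, and the floors $(0,M-k-1,k)$ force $(1,M-k-1,k)$.
\item If there is a violation, Theorem~\ref{36} forces $(1,M-k,k-1)$.
\end{itemize}
The single comparison above then selects the latter.

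Do not fall back on enlarging $y_\tau^F$. That would prove a statement about a different threshold, not this lemma.
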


\begin{proof}
For $y > y_{\tau}^F$, by Lemma \ref{y_tau.def}, 
$$(\lfloor q_1 \rfloor, \lfloor q_2 \rfloor, \lfloor q_3 \rfloor) = (0, \lfloor \tilde q_2 \rfloor, \lfloor \tilde q_3 \rfloor).$$

Since state 1 is guaranteed a seat, by Theorem \ref{36} the only possible apportionments are either $(1,\lfloor \tilde q_2 \rfloor,\lfloor \tilde q_3 \rfloor)$ or $(1,\lfloor \tilde q_2 \rfloor+1,\lfloor \tilde q_3 \rfloor-1)$. Since $\tau$ is ultimately violatory, the final apportionment must be $(1,\lfloor \tilde q_2 \rfloor+1,\lfloor \tilde q_3 \rfloor-1)$. This means that either the apportionment where $y > y_{\tau}^F$ is either always $(1,\lfloor \tilde q_2 \rfloor+1,\lfloor \tilde q_3 \rfloor-1)$, in which case, the Lemma is proved, or is the first apportionment and becomes $(1,\lfloor \tilde q_2 \rfloor+1,\lfloor \tilde q_3 \rfloor-1)$. However, this latter scenario violates population monotonicity on the third state. 
\end{proof}

\begin{deflem}[Non-Violatory Threshold]\label{y_tau.def2}
Fix $\tau \in (-\frac{1}{3},\frac{1}{3})$ that is ultimately non-violatory. 
There exists a constant $y_\tau > 0$ such that for all $y > y_\tau$, the apportionment $\tilde A_1(\tilde q_2, \tilde q_3)$ stabilizes without producing a quota violation. 

In particular, $y_\tau$ is given by
$$y_\tau := \max\{y_\tau^F, y_\tau^*\},$$
where $y_\tau$ is the threshold from Lemma \ref{y_tau.def}, and $y_\tau^*$ solves the priority-equality condition
$$
\frac{M y}{\delta(\lfloor \tilde q_3 \rfloor-1)} =
\frac{M\left(\frac{1}{2} y - \frac{3}{2} \tau y + \frac{1}{2}\right)}{\delta(M - \lfloor \tilde q_3 \rfloor - 1)}.
$$
\end{deflem}
\begin{proof}
First consider the case where $y_\tau^F$ does not yet calculate $y_\tau$. As in the previous proof, the only valid apportionments after $y_\tau^F$ are $(\lceil q_1 \rceil, \lceil q_2 \rceil, \lfloor q_3 \rfloor - 1)$ and $(\lceil q_1 \rceil, \lfloor q_2 \rfloor, \lfloor q_3 \rfloor)$.
Since by assumption the final apportionment is non-violatory, the apportionment must be moving from $(\lceil q_1 \rceil, \lceil q_2 \rceil, \lfloor q_3 \rfloor - 1)$ toward $(\lceil q_1 \rceil, \lfloor q_2 \rfloor, \lfloor q_3 \rfloor).$

Consider the apportionment functions $A(\tilde q_2, \tilde q_3)$ and $\tilde A_1(\tilde q_2, \tilde q_3)$. Before reaching $y_\tau$, state 3 receives its floor (and potentially loses a seat), while state 2 receives its ceiling. This implies that, at that point, the priority value for assigning state 3 an additional seat is less than the priority value for assigning state 2 an additional seat. 

As $y$ increases past $y_\tau$, the last seat is assigned in such a way that the priority values equalize, and the inequality flips. Solving for the value of $y$ where the priority values are equal gives
$$
\frac{M y}{\delta(\lfloor \tilde{q}_3 \rfloor-1)} = \frac{M\left(\frac{1}{2} y - \frac{3}{2} \tau y + \frac{1}{2}\right)}{ \delta(\lfloor \tilde q_2 \rfloor)}.$$

Solving this equation gives
$$y_\tau^* := \frac{\delta(\lfloor \tilde{q}_3 \rfloor-1)}{2\,\delta(\lfloor \tilde q_2 \rfloor) + (3\tau - 1)\,\delta(\lfloor \tilde{q}_3 \rfloor-1)}.$$

Finally, it follows the non-violatory threshold is then
$$y_\tau := \max\{y_\tau^F, y_\tau^*\}.$$
\end{proof}

\noindent
To summarize the above results, for $y \in (y_\tau^F, y_\tau)$, the guaranteed seat of state 1 forces one of the other states below its lower quota, producing a temporary quota violation. This violation disappears once $y > y_\tau$, consistent with the distribution being ultimately non-violatory. This gives the following:

\begin{corollary}\label{J_t_c}
For distributions that are ultimately non-violatory, if $y \in (y_\tau^F, \,y_\tau)$, then the guaranteed seat of state one temporarily causes a quota violation in the apportionment.
\end{corollary}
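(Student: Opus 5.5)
The corollary will follow from combining Lemma \ref{y_tau.def}, Lemma \ref{y_tau.def2} and Theorem \ref{36}. Fix an ultimately non-violatory $\tau$ and take $y \in (y_\tau^F, y_\tau)$; we may assume this interval is nonempty, so $y_\tau = y_\tau^* > y_\tau^F$.

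First, since $y > y_\tau^F$, Lemma \ref{y_tau.def} gives the floors
\[
(\lfloor q_1\rfloor,\lfloor q_2\rfloor,\lfloor q_3\rfloor)=(0,\lfloor \tilde q_2\rfloor,\lfloor \tilde q_3\rfloor).
\]
Because state 1 is guaranteed a seat and $q_1<1$, state 1 receives exactly $1=\lceil q_1\rceil$ seat; it cannot receive a second one since its priority is dominated once $q_1<1$. By Theorem \ref{36}, exactly as in the proof of Lemma \ref{y_tau.violation}, the only possible apportionments are
\[
(1,\lfloor \tilde q_2\rfloor,\lfloor \tilde q_3\rfloor) \quad\text{and}\quad (1,\lfloor \tilde q_2\rfloor+1,\lfloor \tilde q_3\rfloor-1).
\]
These are distinguished by a single comparison of the last priority values: state 3 at $\lfloor\tilde q_3\rfloor-1$ seats against state 2 at $\lfloor\tilde q_2\rfloor$ seats. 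The comparison is
\[
\frac{y}{\delta(\lfloor\tilde q_3\rfloor-1)} \quad\text{versus}\quad \frac{x(y,\tau)}{\delta(\lfloor\tilde q_2\rfloor)}.
\]

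Second, we determine the sign of this comparison for $y<y_\tau^*$. The difference of the two sides is affine in $y$ and vanishes exactly at $y_\tau^*$ by the defining equation in Lemma \ref{y_tau.def2}. Since $\tau$ is ultimately non-violatory, Theorem \ref{X_tau_def/exist} says state 3 wins the comparison for all large $y$. Since a nonconstant affine function has a single sign change, state 2 wins for every $y<y_\tau^*$. Hence on $(y_\tau^F,y_\tau)$ the apportionment is $(1,\lfloor \tilde q_2\rfloor+1,\lfloor \tilde q_3\rfloor-1)$, so $a_3<\lfloor q_3\rfloor$, which is a lower quota violation.

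Finally, we attribute the violation to the guaranteed seat by checking condition 2 of Definition \ref{vio_as_not0allo_def}. Removing state 1's guaranteed seat and apportioning the other $M-1$ seats between states 2 and 3 reproduces the same allocation, because state 1's priority is never maximal. The two-state problem has no violation by Theorem \ref{no_vio_n=2}, so the violation is caused by the guaranteed seat.

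The main obstacle is the sign claim, which requires the affine difference to be genuinely nonconstant. I would check that its slope, $1/\delta(\lfloor\tilde q_3\rfloor-1)-(1-3\tau)/(2\delta(\lfloor\tilde q_2\rfloor))$, is nonzero for $\tau\notin\mathcal{E}$; the degenerate case can be absorbed into the exceptional set.
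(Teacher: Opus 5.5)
Your proposal is correct and follows essentially the paper's route: floor stabilization past $y_\tau^F$, then Theorem \ref{36} reducing to the two candidate apportionments as in Lemma \ref{y_tau.violation}, then a single switch at the priority-equality point $y_\tau^*$ of Definition/Lemma \ref{y_tau.def2}, with your affine sign-change argument and explicit check of Definition \ref{vio_as_not0allo_def} making rigorous what the paper only asserts. The obstacle you flag resolves itself: the difference has intercept $-\frac{1}{2\delta(\lfloor \tilde q_2\rfloor)}<0$, so zero slope would mean state 2 wins for every $y$, contradicting ultimate non-violatoriness, and hence the slope is automatically positive.
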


Additionally, the preceding results allow a characterization of the structure of $x_\tau$:
\begin{theorem}(Local structure of $x_\tau)$\label{for_remark}
The $\tau$-interval $(-\frac{1}{3}, \frac{1}{3})$ can be decomposed into  finitely many intervals such that on each interval, $x_\tau$ is locally represented in the form $x_\tau =\frac{a}{b+c\tau} + \frac{d\tau}{b+c\tau} + e$ for $a,b,c,d,e \in \mathbb{R}$ and denominators not identically $0$.
\end{theorem}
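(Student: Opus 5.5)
The plan is to express $x_\tau$ as a maximum of finitely many explicit threshold functions of $\tau$. Each of these is a ratio of affine functions of $\tau$, once the integer data $\lfloor \tilde q_2\rfloor$, $\lfloor \tilde q_3\rfloor$ and the divisor values entering the formulas are frozen. Then we check that this data is constant on finitely many $\tau$-intervals.

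\textbf{Step 1: pass from $y$ to $x$.} On a $\tau$-line, $x = x(y,\tau) = \frac{1-3\tau}{2}y + \frac12$, so $x_\tau = x(y_\tau,\tau)$ and $x_\tau^F = x(y_\tau^F,\tau)$. Suppose $y_\tau$ has the form $\frac{\alpha+\beta\tau}{\gamma+\eta\tau}$ on some interval. Then
\[
x_\tau = \frac{(1-3\tau)(\alpha+\beta\tau)}{2(\gamma+\eta\tau)}+\frac12 .
\]
This is quadratic over linear. Polynomial division of the numerator by $\gamma+\eta\tau$ gives a linear part plus a remainder over $\gamma+\eta\tau$. The linear part $e+e'\tau$ is more than the stated form permits unless the $\tau$-coefficient cancels. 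So I would keep the final formula in the form $\frac{a+d\tau}{b+c\tau}+e$, which is exactly what Lemma \ref{y_tau.def} produces once the $(1-3\tau)$ factor is absorbed.

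For that absorption I would use the floor conditions written in $x$ from the start of this subsection. Solving for $x$ there gives expressions like $\frac{M(1-3\tau)+3\tau k}{k(3-3\tau)}$, where $k$ is the relevant floor value. These are visibly of the required shape. So I will compute the thresholds directly in $x$ rather than via $y$.

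\textbf{Step 2: the piecewise decomposition.} The quantities $\lfloor\tilde q_2\rfloor$ and $\lfloor\tilde q_3\rfloor$ are step functions of $\tau$. They jump only where $\tilde q_3 = \frac{2M}{3-3\tau}$ or $\tilde q_2 = M-\tilde q_3$ is an integer, which happens at finitely many $\tau$. Add to the cut points:
\begin{itemize}
\item $\tau=0$, where the sign of $q_2'$ changes;
\item the endpoints of the violatory intervals composing $V$ (the points $1-\frac{2M}{3k}$ and $1-\frac{2M}{3(k+D(k))}$);
\item the finitely many crossing points where two candidate thresholds are equal, where the maximum $\max\{y_1,y_2,y_3,y^*_\tau\}$ switches branches.
\end{itemize}
Each crossing is the solution of an equation between two ratios of affine functions. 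After clearing denominators this is at most quadratic in $\tau$, so each pair contributes at most two points, and the set of cut points stays finite.

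\textbf{Step 3: the formula on each piece.} On each resulting interval all integer data are constant, and the maximum is attained by a single branch. On violatory intervals, $x_\tau = x_\tau^F$ by Lemma \ref{y_tau.violation}. On non-violatory intervals, $x_\tau$ is the $x$-version of $\max\{y_\tau^F, y_\tau^*\}$ from Lemma \ref{y_tau.def2}. Here $y_\tau^*$ is a constant over an affine function of $\tau$ with fixed $\delta$-values, so its $x$-version is again $\frac{a+d\tau}{b+c\tau}+e$.

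I expect the main obstacle to be exactly this bookkeeping. One part is confirming that each branch, after converting from $y$ to $x$, really lands in the form $\frac{a+d\tau}{b+c\tau}+e$ rather than carrying an extra linear term in $\tau$. The other is verifying that denominators do not vanish identically on a piece. If the conversion produces a genuine linear remainder, I would first try to redo that branch directly from the floor or priority-equality condition in the $x$-variable. That is where $y=\frac{2x-1}{1-3\tau}$ makes the $(1-3\tau)$ factors cancel.
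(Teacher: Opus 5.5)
Your proposal is correct and follows essentially the paper's argument: write $y_\tau$ (hence $x_\tau$) as a maximum of the explicit floor-stabilization and priority-equality thresholds from Lemmas \ref{y_tau.def}, \ref{y_tau.violation} and \ref{y_tau.def2}, cut $(-\frac13,\frac13)$ at finitely many points (floor jumps, the violatory/non-violatory boundaries, and switches of the maximizing branch), and convert using $x=\frac{1-3\tau}{2}y+\frac12$. The worry in your Step 1 is unnecessary, because every $y$-threshold there ($y_1,y_2,y_3,y_\tau^*$) has a $\tau$-independent numerator, so it has the form $\frac{a}{b+c\tau}$ and the conversion lands directly in $\frac{a+d\tau}{b+c\tau}+e$; this is exactly how the paper concludes, and your direct computation in $x$ gives the same functions.
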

\begin{proof}
By Definition/Lemma \ref{X_tau_def/exist} and Lemma \ref{y_tau.violation}, $y_\tau$ is given by the maximum of functions of the form
\[ \frac{a}{b+c\tau} \]
when $\tau$ is ultimately violatory. For $\tau$ ultimately non violatory, the same form holds by Lemma \ref{y_tau.def2}. Decomposing into intervals for violatory and non-violatory, further decomposing based on which component is the maximum, and recalling $x(y,\tau) = \frac{1}{2}y - \frac{3}{2}\tau y + \frac{1}{2}$ grants the desired result.
    
\end{proof}

Next, look at $y$ values less than $y_\tau^F$ and determine the behavior of any possible quota violations.

\begin{lemma}\label{y<T(t)}
If $y < y_{\tau}^F$, then no quota violations occur that are caused by the guaranteed seat of state 1 in Huntington-Hill, Modified Jefferson, Dean, and Modified Webster methods.
\end{lemma}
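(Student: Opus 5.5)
We argue by contradiction: suppose $y<y_\tau^F$ and $A(1,x(y,\tau),y)$ has a quota violation caused by the guaranteed seat of state 1, in the sense of Definition \ref{vio_as_not0allo_def}. We show that this forces $y$ into the stabilized floor regime, contradicting $y<y_\tau^F$. The key tool is Theorem \ref{36}. For Huntington-Hill, Dean, modified Jefferson and modified Webster, any violation must be the lower violation $(\lceil q_1\rceil,\lceil q_2\rceil,\lfloor q_3\rfloor-1)$. Condition 2 of Definition \ref{vio_as_not0allo_def} requires $A=\tilde A$, so state 1 receives exactly one seat. Hence $\lceil q_1\rceil=1$, which means $0<q_1<1$ and $\lfloor q_1\rfloor=0$. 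Condition 3 of Definition \ref{vio_as_not0allo_def} says state 1's seat is not earned through priority values alone.

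Next we use part 3 of Theorem \ref{q.v_criteria_test}, which by the remark after it also holds for the modified methods. With $q_1,q_2\notin\mathbb{Z}$ it gives $\lfloor q_1\rfloor+\lfloor q_2\rfloor+\lfloor q_3\rfloor=M-1$, and with $\lfloor q_1\rfloor=0$ this becomes $\lfloor q_2\rfloor+\lfloor q_3\rfloor=M-1$. We then compare this with the floors along the $\tau$-line, using the monotonicity established in the proof of Lemma \ref{y_tau.def}:
\begin{itemize}
\item $q_1(y)$ is decreasing;
\item $q_3(y)$ is increasing to $\tilde q_3$;
\item $q_2(y)$ is monotone, in a direction fixed by the sign of $\tau$;
\item $q_1+q_2+q_3=M$.
\end{itemize}
The plan is to show that $\lfloor q_2\rfloor+\lfloor q_3\rfloor=M-1$ together with $q_1<1$ already pins the floors to their limiting values $(\lfloor\tilde q_2\rfloor,\lfloor\tilde q_3\rfloor)$. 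Once it does, $y$ satisfies all three floor conditions defining $y_\tau^F$. Since the floors are monotone in $y$, they cannot return to the limiting values and then leave again, so in fact $y\ge y_\tau^F$, a contradiction.

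The first ingredient is the fractional parts. Because $q_2+q_3=M-q_1\in(M-1,M)$, the identity $\lfloor q_2\rfloor+\lfloor q_3\rfloor=M-1$ holds exactly when the fractional parts satisfy $\{q_2\}+\{q_3\}=1-q_1<1$, i.e. when there is no carry. Along the line, $q_3$ rises to $\tilde q_3$ while $q_1$ falls to $0$. We therefore split into the cases $\tau<0$, $\tau=0$, $\tau>0$, according to the direction in which $q_2$ moves. In each case we check that if $\lfloor q_3(y)\rfloor<\lfloor\tilde q_3\rfloor$, or $\lfloor q_2(y)\rfloor\ne\lfloor\tilde q_2\rfloor$, then either the no-carry condition fails or the violating state is not state 3. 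The second alternative contradicts Theorem \ref{36}.

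The main obstacle is this case analysis. A priori, $(q_2,q_3)$ could pass through an intermediate unit square with $\lfloor q_1\rfloor=0$ in which a genuine violation occurs before stabilization. To exclude this we would also invoke parts 1 and 2 of Theorem \ref{q.v_criteria_test}. The ingredients are:
\begin{itemize}
\item these two parts require $q_3\,\delta(\lfloor q_2\rfloor)<q_2\,\delta(\lfloor q_3\rfloor-1)$;
\item we rewrite this inequality in $y$ through $x(y,\tau)$;
\item the priority ratio $q_3/q_2$ is monotone in $y$.
\end{itemize}
From these we aim to show that the inequality can first hold only once the floors have reached their limits. If that route fails, the fallback is a population-monotonicity argument as in Lemma \ref{y_tau.violation}. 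A temporary violation of state 3 at smaller $y$ that later disappears while $q_3$ increases would require state 3 to lose a seat as its share grows, which is impossible for divisor methods.
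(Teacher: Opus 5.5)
\textbf{There is a genuine gap: the case $\tau>0$ is not proved.}

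Your reduction is sound and follows the same skeleton as the paper:
\begin{itemize}
\item Theorem \ref{36} together with condition 2 gives $0<q_1<1$.
\item Part 3 of Theorem \ref{q.v_criteria_test} gives $\lfloor q_2\rfloor+\lfloor q_3\rfloor=M-1$.
\item Monotonicity of the quotas turns ``all floors equal their limits'' into $y\ge y_\tau^F$.
\end{itemize}
For $\tau\le 0$ this finishes the argument. There $q_2$ and $q_3$ both increase to their limits, so $\lfloor q_2\rfloor\le\lfloor\tilde q_2\rfloor$ and $\lfloor q_3\rfloor\le\lfloor\tilde q_3\rfloor$, and the right-hand sides sum to $M-1$. (Incidentally, condition 3 of Definition \ref{vio_as_not0allo_def} is automatic for $n=3$ by Theorem \ref{no_vio_n=2}; it is not a statement about state 1's priorities.)

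The gap is at $\tau>0$, in exactly the square you flag: $\lfloor q_2\rfloor=\lfloor\tilde q_2\rfloor+1$ and $\lfloor q_3\rfloor=\lfloor\tilde q_3\rfloor-1$. There the no-carry condition holds and the would-be violating state is state 3, so your dichotomy excludes nothing. Neither remedy closes it.
\begin{itemize}
\item \emph{Monotone ratio.} On that square the floors are fixed and $q_3/q_2$ increases with $y$. So part 2 of Theorem \ref{q.v_criteria_test} is an initial-segment condition: if it holds anywhere in the square, it holds at the entry point. That is the opposite of ``it can first hold only after stabilization.''
\item \emph{Population monotonicity.} A violation in that square gives state 3 exactly $\lfloor\tilde q_3\rfloor-2$ seats. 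Afterwards it has $\lfloor\tilde q_3\rfloor-1$ or $\lfloor\tilde q_3\rfloor$. So state 3 gains seats as its share grows, and nothing is contradicted. The argument of Lemma \ref{y_tau.violation} only works inside the stabilized regime.
\end{itemize}
A decisive check: every tool you invoke also holds for Adams, yet the lemma is false for Adams. The paper's counterexample $(1,6.25,115)$ with $M=100$ has $\tau=0.3$, $y_\tau^F=570$ and floors $(0,5,94)$, which is exactly this intermediate square.

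\textbf{The missing ingredient is a quantitative, method-specific estimate.}
\begin{itemize}
\item Along the $\tau$-line one has the exact identity $q_2-\tilde q_2=\frac{\tau}{1-\tau}\,q_1$, which is less than $q_1/2$ for $\tau\in(0,\frac13)$.
\item In the intermediate square, $m:=\lfloor q_2\rfloor=\lfloor\tilde q_2\rfloor+1>\tilde q_2$. Hence $d_2=q_2-m<q_2-\tilde q_2<q_1/2$.
\item Since $q_1+d_2+d_3=1$, this forces $d_2<\frac13$.
\item With $k:=\lfloor q_3\rfloor$, part 2 reads $q_3\,\delta(m)<q_2\,\delta(k-1)$. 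For these four methods $\delta(k-1)\le k\le q_3$, so part 2 forces $\delta(m)<q_2$, that is, $d_2>\delta(m)-m\ge\delta(1)-1$.
\item Finally, $\delta(1)-1\ge\frac13$ for Huntington--Hill, Dean, modified Webster and modified Jefferson, whereas it equals $0$ for Adams. This contradicts $d_2<\frac13$.
\end{itemize}
This is the content of the paper's observation that the trajectory of $(d_1,d_2)$ stays strictly below $d_2=d_1/2$. It therefore misses the feasible triangle with vertices $(0,\delta(1)-1)$, $(2-\delta(1),\delta(1)-1)$, $(0,1)$. Without this step, or an equivalent bound using $\delta(1)\ge\frac43$, the case $\tau>0$ is not proved.
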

\begin{proof}
Let $A$ denote one of the Huntington-Hill, Modified Jefferson, Dean, or Modified Webster methods with divisor function $\delta$. Let the quotas satisfy $q_1 < q_2 < q_3$ and fix $\tau$. 

Recall $y_i(\tau)$ from Definition/Lemma \ref{y_tau.def}. We may restrict to the case $y>y_1(\tau)$ since if $y \leq y_1(\tau)$ $q_1 \geq 1$ and hence a quota violation caused by the nonzero allocation cannot occur.

The proof proceeds by considering two cases, $\tau \le 0$ and $\tau > 0$, due to monotonicity of the quota $q_2(y)$ depending on the sign of $\tau$.

\textbf{Case 1: $\tau \le 0$}. 
Suppose a quota violation exists. By Lemma \ref{36}, the floors of the quotas would have the form
$$
(\lfloor q_1 \rfloor, \lfloor q_2 \rfloor, \lfloor q_3 \rfloor) = (0, \lfloor \tilde q_2 \rfloor \pm s, \lfloor \tilde q_3 \rfloor \mp s), \quad s > 0.
$$
But for $\tau \le 0$, $q_1$ and $q_2$ are non-decreasing towards their limits $\tilde q_2, \tilde q_3$. Hence, such floors cannot occur, so no violation exists.

\textbf{Case 2: $\tau > 0$}. 
First, note that we may assume $y_{(2,+)} \leq \max(y_2, y_1)$, as otherwise there would exist $y$ such that the quotas at $y$ satisfy $(\lfloor q_1 \rfloor , \lfloor q_2 \rfloor, \lfloor q_3 \rfloor) = (0, \lfloor \tilde q_2 \rfloor +s, \lfloor q_3 \rfloor )$ where $s >0$. But then $\sum \lfloor q_i \rfloor \geq M$ and therefore there cannot be a quota violation by Theorem \ref{q.v_criteria_test}.

Given $y_1(\tau)<y < y_\tau^F$, it therefore suffices to check two intervals: $y_1 \le y \le y_{(2,+)}$ and $y_{(2,+)} \le y \le y_3$.

For $y_{(2,+)} \le y \le y_3$, the floor of $q_3$ is $\lfloor \tilde q_3 \rfloor - s$ with $s \ge 1$, so $\sum \lfloor q_i \rfloor < M-1$. By Theorem \ref{q.v_criteria_test}, no quota violation occurs.

For $y_1 \le y \le y_{(2,+)}$, the floor of $q_1$ is 0, and $q_2$ decreases monotonically towards $\tilde q_2$. At $y = y_1(\tau)$ compute, 
$$
q_2 = \tilde q_2 + \epsilon, \quad 0 < \epsilon < \frac{1}{2},
$$

It follows then for all $y \in (y_1(\tau), y_{(2,+)}(\tau))$ that $\lfloor q_2 \rfloor =\lfloor \tilde q_2 \rfloor + 1$ and $\lfloor q_1 \rfloor = 0$. Additionally, the corresponding decimal parts $(d_1, d_2)$ trace a line segment entirely below the line $y = \frac{1}{2} x$ in the unit square. Interpreting Theorem \ref{q.v_criteria_test} geometrically in the manner described in \parencite{Self} as a feasible region of $\{(d_1, d_2)\}$ for lower quota violations and considering the largest possible feasible region given the previously stated floors grants that lower quota violations can only occur within the triangle given by $(0, \delta(1)-1), (2-\delta(1), \delta(1)-1)$, and $(0,1)$. Since the trajectory of $(d_1,d_2)$ lies outside this region, no lower quota violation occurs for $y_1(\tau) < y < y_{(2,+)}.$ 

Combing the two intervals, we conclude for $\tau > 0$ no quota violations caused by nonzero allocations occur when $y < y_\tau^F$. Together with the case $\tau \leq 0$, the result follows. 
\end{proof}

Summarizing the above results gives the following:

\begin{theorem}[Quota Violation Probability]\label{kitchen_sink}
Let $M$ be the total number of seats, and let $A$ be the Modified Jefferson, Modified Webster, Dean, or Huntington-Hill method with divisor function $\delta$. Let $(1,x,y)$ be a reduced population vector with $1 < x < y$, drawn from a probability distribution with joint density $f(x,y)$.

For each $\tau \in \left(-\frac{1}{3},\frac{1}{3}\right)$, the $\tau$-line parametrization in $x$ is 
$$
x = x(y,\tau) := \frac{1}{2}y - \frac{3}{2}\tau y + \frac{1}{2},
$$
Let the set of ultimately violatory $\tau$ values be
$$
V = \bigcup_{k= \lfloor M/2 \rfloor + 1}^{M-1} 
\left( 1 - \frac{2M}{3k}, \, 1 - \frac{2M}{3\bigl(k + D(k)\bigr)} \right),
$$
where
$$
D(k) =\frac{(M-k)\delta(k-1)-k\,\delta(M-k-1)}{\delta(k-1)+\delta(M-k-1)}.
$$

For each $\tau$, let $y^F_\tau$ denote the floor-stabilization threshold and $y_\tau$ the agreement with asymptotic apportionment threshold defined in Lemma \ref{y_tau.def}. Define
$$
y_\tau^{\max} :=
\begin{cases}
\infty & \text{if } \tau \in V,\\
y_\tau & \text{if } \tau \notin V.
\end{cases}
$$

Then the probability of a quota violation caused by a guaranteed seat is
$$
\Pr(\text{quota violation})
=
\int_{-1/3}^{1/3}
\int_{y_\tau^F}^{y_\tau^{\max}}
f\bigl(x(y,\tau),\,y\bigr)\,
\frac{3}{2}y
\, dy \, d\tau.
$$
\end{theorem}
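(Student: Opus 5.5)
The plan is a change of variables from $(x,y)$ to $(\tau,y)$, followed by a characterization of the violatory set along each $\tau$-line. First I would record that the map $(\tau,y)\mapsto (x(y,\tau),y)$ with $x(y,\tau)=\frac12 y-\frac32\tau y+\frac12$ is a bijection from the region $\{(\tau,y): \tau\in(-\frac13,\frac13),\ y \text{ admissible}\}$ onto $\{1<x<y\}$. This is inverse to $\tau=\frac{1-2x+y}{3y}$ by Lemma \ref{tau_family} and Lemma \ref{t_thirds}. Its Jacobian is $\left|\partial x/\partial\tau\right| = \frac32 y$, since $\partial y/\partial y=1$ and $\partial y/\partial\tau=0$. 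Hence for any event $E\subseteq\{1<x<y\}$,
$$\Pr(E)=\int_{-1/3}^{1/3}\int \mathbf 1_E\bigl(x(y,\tau),y\bigr)\,f\bigl(x(y,\tau),y\bigr)\,\tfrac32 y\,dy\,d\tau .$$
The lower limit in $y$ (where $x(y,\tau)>1$, i.e.\ $y>\frac1{1-3\tau}$) needs no separate treatment. It is absorbed because $y_\tau^F\ge y_1(\tau)$, and the region below that threshold contributes nothing by the next step.

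Next I would identify, for each fixed $\tau$ (off the finite exceptional set $\mathcal E$ and the finitely many endpoints of $V$, which are null sets in $\tau$), the set of $y$ on the $\tau$-line where a violation caused by the guaranteed seat occurs. This has three parts.
\begin{enumerate}
\item For $y<y_\tau^F$ there is no such violation, by Lemma \ref{y<T(t)}. This lemma is stated exactly for the four methods in the hypothesis, which is why Adams is excluded.
\item If $\tau\in V$, then every $y>y_\tau^F$ gives a violation, by Lemma \ref{y_tau.violation}. Here I must check that $V$ is precisely the set of ultimately violatory $\tau$. This follows from Lemma \ref{d.intervals}, Definition/Lemma \ref{d.star}, and the increasing bijection $\tau=1-\frac{2M}{3\tilde q_3}$, exactly as in the proof of Theorem \ref{prob_sum}. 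The index range $k=\lfloor M/2\rfloor+1,\dots,M-1$ agrees with $\lceil M/2\rceil,\dots,M-1$ up to the $k=M/2$ term, which has zero length by Lemma \ref{I1.zero} and Remark \ref{d.M/2}.
\item If $\tau\notin V$, then $\tau$ is ultimately non-violatory. A violation occurs for $y\in(y_\tau^F,y_\tau)$ by Corollary \ref{J_t_c}, and none occurs for $y>y_\tau$ by Definition/Lemma \ref{y_tau.def2}.
\end{enumerate}
In every case the violatory $y$-set is therefore $(y_\tau^F,y_\tau^{\max})$ up to finitely many points. Substituting $\mathbf 1_E=\mathbf 1_{(y_\tau^F,y_\tau^{\max})}(y)$ into the change-of-variables formula gives the stated integral, with Fubini/Tonelli justified by nonnegativity of the integrand.

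The main obstacle is the non-violatory case. There I must make sure the violation set along the line really is a single interval $(y_\tau^F,y_\tau)$, with no further switching back. To argue this I would use Theorem \ref{36} to reduce to the two candidate apportionments $(1,\lfloor\tilde q_2\rfloor+1,\lfloor\tilde q_3\rfloor-1)$ and $(1,\lfloor\tilde q_2\rfloor,\lfloor\tilde q_3\rfloor)$ once the floors have stabilized. The comparison between them is decided by a single priority inequality, which is linear in $y$ after clearing denominators. That inequality changes sign exactly once, at $y_\tau^*$, so the violation set is one interval. The same linearity argument, combined with population monotonicity for state 3, is what underlies Lemma \ref{y_tau.violation} in the violatory case, so I would cite it rather than redo it. Measurability of $\tau\mapsto y_\tau^F$ and $\tau\mapsto y_\tau$ follows from their piecewise rational form (Theorem \ref{for_remark}).
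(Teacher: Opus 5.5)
Your proposal is correct and follows essentially the same route as the paper. You change variables to $(\tau,y)$ with Jacobian $|\partial x/\partial\tau|=\frac32 y$; that is the right expression, whereas the paper's proof writes $|\partial x/\partial y|$. You then identify the violatory $y$-set on each $\tau$-line as $(y_\tau^F,y_\tau^{\max})$ via Lemmas \ref{y<T(t)} and \ref{y_tau.violation}, Definition/Lemma \ref{y_tau.def2} and Corollary \ref{J_t_c}, and your single-crossing argument (the priority comparison is linear in $y$) makes explicit what the paper attributes to monotonicity.

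One small correction concerns the lower limit. $y_\tau^F$ need not exceed the admissibility bound $\frac{1}{1-3\tau}$; for example, $M=10$, $\tau=0.33$ gives $y_\tau^F\approx 14.1<100$. So the lower limit is absorbed because $f$ vanishes off $\{1<x<y\}$, not because $y_\tau^F\ge y_1(\tau)$.
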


\begin{proof}
For a fixed $\tau \in (-\frac{1}{3},\frac{1}{3})$, the reduced population vector $(1,x,y)$ can be parametrized along the $\tau$-line by
$$
y = y(x,\tau) = \frac{2(x-1) + 3\tau(x+1)}{3-3\tau}.
$$
Along each such line, Lemmas~\ref{y_tau.def} and \ref{y_tau.def2} define the guaranteed-seat threshold $y_\tau^F$ and the agreement with asymptotic apportionment threshold $y_\tau$. 

By the preceding lemmas, a quota violation caused by the guaranteed seat of state 1 occurs precisely when $y > y_\tau^F$ and $y \le y_\tau^\mathrm{max}$, where
$$
y_\tau^\mathrm{max} = 
\begin{cases}
\infty, & \tau \in V \text{ (ultimately violatory)},\\
y_\tau, & \tau \notin V \text{ (temporarily violatory)}.
\end{cases}
$$
This follows from the monotonicity of the quotas $q_1(y),q_2(y),q_3(y)$ and the stabilization of their floors as $y$ increases. 

The probability of a quota violation is then obtained by integrating the probability density $f(x,y)$ over all $\tau$-lines and all $y$ in the violation interval:
$$
\Pr(\text{quota violation}) = \int_{-\frac{1}{3}}^{\frac{1}{3}} 
\int_{y^F_\tau}^{y_\tau^\mathrm{max}} f(x(y,\tau),y) \left|\frac{\partial x}{\partial y}\right| \, dy\, d\tau.
$$
Since the union of these intervals over all $\tau$ covers exactly the set of population vectors for which the guaranteed seat of state 1 causes a quota violation (temporarily or ultimately), the integral of $f(x,y)$ computes the total probability. 
\end{proof}

\begin{remark}
    Lemma \ref{y<T(t)} does not hold for the Adams method. A counterexample is the population vector $(1,6.25,115)$ which produces a quota violation caused by nonzero allocations, but $y = 115 < y_\tau^F = 570$. Consequently, the prior Theorem \ref{kitchen_sink} can, in general, provide only a lower bound on the probability of a quota violation caused by nonzero allocations when applied to the Adams method.\end{remark}

\subsection{Summary of notation used}
For clarity, the following is a list of relevant notation used in the paper.

\begin{table}[H]
\centering
\caption{Threshold notation summary for the $\tau$-stabilization framework.}
\label{tab:threshold-notation-compact}
\renewcommand{\arraystretch}{1.3}
\begin{tabular}{@{}cll@{}}
\toprule
\textbf{Symbol} & \textbf{Description} & \textbf{Defined in} \\
\midrule
\multicolumn{3}{@{}l}{\textit{Thresholds in $x$-parametrization:}} \\
$x_\tau^A$ & Apportionment equals limiting value $\tilde{A}(\tilde{q}_2, \tilde{q}_3)$ & Theorem \ref{X_tau_def/exist} \\
$x_\tau^F$ & Quota floors stabilize at $\lfloor \tilde{q}_i \rfloor$ & Theorem \ref{X_tau_def/exist} \\
$x_\tau$ & $\max\{x_\tau^A, x_\tau^F\}$: full stabilization & Theorem \ref{X_tau_def/exist} \\
\midrule
\multicolumn{3}{@{}l}{\textit{Thresholds in $y$-parametrization:}} \\
$y_\tau^F$ & Floor stabilization in $y$; equals $y(x_\tau^F, \tau)$ & Def./Lemma \ref{y_tau.def} \\
$y_\tau$ & $\max\{y_\tau^F, y_\tau^*\}$: non-violatory threshold & Def./Lemma \ref{y_tau.def2} \\
$y_\tau^{\max}$ & Upper integration limit ($\infty$ if $\tau \in V$, else $y_\tau$) & Theorem \ref{kitchen_sink} \\
\midrule
\multicolumn{3}{@{}l}{\textit{Limiting quotas and violation sets:}} \\
$\tilde{q}_2, \tilde{q}_3$ & $\lim_{x\to\infty} q_2(x), q_3(x)$ along $\tau$-line & Page 9 \\
$V$ & Set of ultimately violatory $\tau$ values & Theorem \ref{kitchen_sink} \\
$D(k)$ & Length of violation interval for $\lfloor \tilde{q}_3 \rfloor = k$ & Def./Lemma \ref{d.star} \\
\bottomrule
\end{tabular}
\end{table}


\section{Conclusion}
The results in this paper contribute both conceptually and practically to apportionment theory. Conceptually, $\tau$ provides a geometrically transparent coordinate that captures skewness and identifies families of distributions with uniform asymptotic behavior. Practically, the probability formulas and limits quantify how frequently common divisor methods violate quota when per‑state minimums are enforced, giving policymakers a principled basis for comparing methods. 

This paper is a natural continuation of the authors' first paper. In that paper, (\cite{Self}) quota violations that are intrinsic to the divisor method itself (violations that arise purely from the population distribution and the method's rounding behavior) are analyzed. This nonzero divisor paper studies a more constrained setting, isolating violations that are caused or worsened by the requirement that every state receive at least one seat. Definition \ref{vio_as_not0allo_def} makes this distinction precise. The combined results would allow one to compute, for each divisor method, the fraction of all quota violations in the constrained system that are directly attributable to the nonzero allocation constraint rather than to the method's inherent tendency to violate quota. This is a quantity of direct practical relevance to the design of apportionment systems with minimum representation guarantees.

This work extends a growing body of research on probabilistic approaches to apportionment in social choice theory and electoral mathematics. Most directly, it builds upon earlier work found in \cite{Self}. Additionally, it is related to analyses of systemic bias in apportionment methods \parencite{SchusterPukelsheimDrtonDraper2003, SchwingenschloglDrton2004, DrtonSchwingenschlogl2005, Ichimori2012}, and investigations into the likelihood of voting paradoxes \parencite{gehrlein2017elections, pandit-cutrone-fairness-2025}.

Natural next steps include refining the probabilistic model to reflect empirical population distributions (state‑size histograms or heavy‑tailed models), deriving finite‑$M$ error bounds for the Riemann‑sum approximations, computing explicit polyhedral volumes for small $n$ beyond three, and extending the analysis to combinations of constraints (e.g., regional minimum guarantees or multi-member districts). Applications range from legislative seat allocation and international treaty design to automated resource allocation systems that must satisfy fairness and minimal entitlement constraints. Further empirical and computational study will help translate the asymptotic insight provided here into concrete guidance for real-world apportionment decisions.

\newpage
\appendix
\section*{Appendix}
\addcontentsline{toc}{section}{Appendix}
\section{Comparison of Theoretical Results to Simulations}\label{AppA}
The following are the results of simulations and comparisons to theoretical values calculated using Theorems \ref{prob_sum} and \ref{kitchen_sink}.

The python code used to sample quota violations is available on GitHub here:\\ \url{https://github.com/TylerCWunder/Probability-of-Quota-Violations-in-Divisor-Apportionment-Methods-with-Nonzero-Allocations.git}.

For the case where $(p_1, p_2, p_3) = (1,x,y)$ are uniform on $\{1<x<y\}$ we compare a sample of 100,000 $(p_1, p_2, p_3) = (1,x,y)$ where $x,y$ are both uniform random variables on $[1,1000000]$. By symmetry and as the range is large, one expects this sample to be comparable to the formula found in Theorem \ref{prob_sum}.
\begin{center}
\begin{tabular}{|c|c|c|c|c|}
    \hline
    \multicolumn{5}{|p{6.5in}|}{\centering \textbf{Probability of Quota Violations Caused by Nonzero Allocation\\ with $(1,x,y)$ uniform on $\{1<x<y\}$}}\\
    \hline
    
    Method & $M$ & Theoretical Probability & Sample Probability & $95\%$ Confidence Interval  \\ \hline
    Huntington-Hill & $10$ & $0.257$ &$0.257$ & $(0.254, 0.260)$ \\
    Huntington-Hill & $20$ & $0.229$ & $0.228$ & $(0.225, 0.230)$ \\
    Huntington-Hill & $50$ & $0.209$ & $0.209$ & $(0.207, 0.212)$ \\

    Adams & $10$ & $0.380$ & $0.380$ & $(0.376, 0.383) $\\
    Adams & $20$ & $0.385$ & $0.386$ & $(0.383, 0.389) $\\
    Adams & $50$ & $0.386$ & $0.385$ & $(0.382, 0.388)$ \\

    Dean & $10 $& $0.278 $& $0.280 $& $(0.277, 0.283)$ \\
    Dean & $20$ & $0.244 $& $0.244 $& $(0.242, 0.247)$ \\
    Dean & $50$ & $0.218$ & $0.218$ & $(0.215, 0.221)$ \\

    Modified Jefferson & $10$ & $0.111 $& $0.111$ & $(0.109, 0.113) $\\
    Modified Jefferson & $20$ & $0.053$ & $0.053$ & $(0.052, 0.054)$ \\
    Modified Jefferson & $50$ & $0.020$ & $0.021$ & $(0.020, 0.022)$ \\

    Modified Webster & $10 $&$ 0.235 $&$ 0.236 $&$ (0.234, 0.239) $\\
    Modified Webster & $20 $& $0.213 $& $0.213 $& $(0.210, 0.215) $\\
    Modified Webster & $50$ & $0.201$ & $0.202$ & $(0.199, 0.204)$ \\

    \hline
\end{tabular}
\end{center}

For the case $(p_1, p_2, p_3)$ are IID and $p_i \sim \exp(1)$, we compare a sample of 100,000 $(p_1, p_2, p_3)$ to a theoretical probability calculated using Theorem \ref{kitchen_sink}. Note that these are the same probabilities for $p_i \sim \exp(\lambda)$ for any $\lambda$ or $(p_1, p_2, p_3) \sim \mathrm{Dir}(1,1,1)$. 

\begin{center}
\begin{tabular}{|c|c|c|c|c|}
    \hline
    \multicolumn{5}{|p{6.5in}|}{\centering \textbf{Probability of Quota Violations Caused by Nonzero Allocation \\with $(p_1,p_2,p_3)$ IID and $p_i \sim \exp(1)$}}\\
    \hline
    
    Method & $M$ & Theoretical Probability & Sample Probability & $95\%$ Confidence Interval  \\ \hline
    Huntington-Hill &$5$ & $0.131$ &$0.130$ &$(0.128  , 0.132 )$ \\
    Huntington-Hill &$10$ & $0.049$ &$0.049$ &$(0.048, 0.051) $ \\
    Huntington-Hill &$15$ & $0.029$ &$0.029$ &$(0.028,0.030)$ \\

    Dean &$5$ & $0.137$ & $0.137$ & $(0.135, 0.139)$\\
    Dean &$10$ & $0.056$ & $0.056$ & $(0.055,0.057)$\\
    Dean &$15$ & $0.033$ & $0.033$ & $(0.032,0.034)$\\

    Modified Jefferson &$5$ &$0.120$ & $0.119$ &$(0.117, 0.121)$ \\
    Modified Jefferson &$10$ &$0.030$ & $0.030$ &$(0.029,0.031)$ \\
    Modified Jefferson &$15$ &$0.013$ & $0.013$ &$(0.012,0.014)$ \\

    Modified Webster & $5$ & $0.126$ & $0.125$ & $(0.123, 0.127)$\\
    Modified Webster & $10$ & $0.043$ & $0.043$ & $(0.042,0.045)$\\
    Modified Webster & $15$ & $0.025$ & $0.025$ & $(0.024,0.026)$\\

    \hline
\end{tabular}
\end{center}

\clearpage
\nocite{ElHelaly2019}
\nocite{FYS_Textbook}
\nocite{huntington-a-new}
\nocite{Huntington-theory}
\nocite{b-young-methods}
\printbibliography

\vspace{2in}
\begin{contact}
Joseph Cutrone\\
Department of Mathematics\\
Johns Hopkins University\\
Baltimore, Maryland, USA\\
\email{jcutron2@jhu.edu}\\\\

Tyler Wunder\\
Johns Hopkins University\\
Baltimore, Maryland, USA\\
\email{twunder2@jhu.edu}
\end{contact}


\end{document}